\documentclass[11pt,leqno]{article}
\usepackage{amssymb, amscd, amsmath, amsthm}

\allowdisplaybreaks

\def\ve{\varepsilon}
\def\intl{\int\limits}
\def\mod{\,\text{\rm mod}\;}
\def\beq{\begin{equation}}
\def\eeq{\end{equation}}
\def\cite#1{{\rm [#1]}}
\def\ol{\overline }
\def\ul{\underline }
\def\wt{\widetilde }

\def\Re{\text{\rm Re}\,}
\newtheorem{theorem}{Theorem}
\newtheorem{lemma}{Lemma}

\theoremstyle{definition}
\newtheorem{defn}{Definition}
\DeclareMathOperator{\Res}{Res}

\begin{document}

\numberwithin{equation}{section}
\title{Are there arbitrarily long arithmetic progressions in the sequence of twin primes?}

\author{J\'anos Pintz\thanks{Supported by OTKA Grants K72731, K67676 and ERC-AdG.228005.}}

\date{}
\maketitle
\section{Introduction}
\label{sec:1}

The problem in the title seemed to be out of reach of any methods before 2004.
We have still no answer for it, and it is no surprise that we will not answer it in the present work either.
However, in the last few years the following developments have been established in connection with the above problem.

\renewcommand{\thetheorem}{\Alph{theorem}}

\begin{theorem}[Green and Tao \cite{GT}]
\label{th:A}
The primes contain arbitrarily long arithmetic progressions.
\end{theorem}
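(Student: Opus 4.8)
The plan is to deduce this from Szemer\'edi's theorem via a \emph{transference principle}. Since the primes have density zero, Szemer\'edi's theorem does not apply directly; instead I would first restrict attention to primes lying in a fixed residue class $b \pmod W$, where $W = \prod_{p \le w} p$ and $(b,W) = 1$ (the ``$W$-trick''). This class carries a positive proportion of the primes up to $N$, it is free of the small-prime obstructions that would otherwise skew the local correlations, and after the rescaling $n \mapsto Wn + b$ the problem becomes: a set of \emph{positive density relative to a suitable majorant} on $\{1,\dots,N\}$ contains $k$-term arithmetic progressions.

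The heart of the matter is constructing that majorant and proving a relative Szemer\'edi theorem. First I would build a \emph{pseudorandom measure} $\nu \colon \{1,\dots,N\} \to \mathbb{R}^{\ge 0}$ with $\mathbb{E}_{n \le N}\,\nu(n) = 1 + o(1)$ such that a bounded multiple of $\nu$ majorizes the normalized characteristic function of the $W$-tricked primes; the natural candidate is a normalized truncated Selberg-type divisor sum, $\nu(n) \asymp \bigl(\sum_{d \mid Wn + b,\ d \le R} \mu(d)\log(R/d)\bigr)^{2}$ with $R = N^{c}$ for a small constant $c$. Using the Goldston--Y{\i}ld{\i}r{\i}m asymptotic evaluation of such sums, I would verify the two required axioms: the \emph{linear forms condition} (joint averages of $\nu$ along any bounded family of affine-linear forms in several variables are $1 + o(1)$) and the \emph{correlation condition} (an $\ell^{1}$-type bound on the shifted products $\prod_i \nu(n + h_i)$). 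These verifications carry essentially all of the analytic number theory in the argument.

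Next I would prove the \emph{relative Szemer\'edi theorem}: if $0 \le f \le \nu$ and $\mathbb{E}_{n \le N} f(n) \ge \delta > 0$, then $f$ contains $\gg_{\delta,k} N^{2}$ progressions of length $k$. The argument splits $f = f_{\mathrm{str}} + f_{\mathrm{unf}}$, where $f_{\mathrm{str}} = \mathbb{E}(f \mid \mathcal{B})$ is the conditional expectation onto a $\sigma$-algebra $\mathcal{B}$ generated by finitely many \emph{dual functions} of $f$, chosen so that the complementary part $f_{\mathrm{unf}}$ has small Gowers $U^{k-1}$-norm. The linear forms and correlation conditions force $\nu$ itself to be close to $1$ in the norms that control this decomposition, which keeps $f_{\mathrm{str}}$ essentially bounded, $0 \le f_{\mathrm{str}} \le 1 + o(1)$, despite $\nu$ being unbounded. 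A \emph{generalized von Neumann theorem} (again invoking the linear forms condition to absorb the error terms) then shows that $f_{\mathrm{unf}}$ contributes negligibly to the count of $k$-term progressions, so the count for $f$ is comparable to that for the genuinely bounded function $f_{\mathrm{str}}$, which has mean $\ge \delta - o(1)$; applying the quantitative (Varnavides-averaged) form of ordinary Szemer\'edi to $f_{\mathrm{str}}$ closes the loop.

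Finally, taking $f$ to be the appropriately normalized characteristic function of the $W$-tricked primes in $\{1,\dots,N\}$ — which has mean bounded below by a positive absolute constant and is majorized by a bounded multiple of $\nu$ — the relative Szemer\'edi theorem yields $\gg_{k} N^{2}$ progressions of length $k$ in $f$, hence after undoing the normalization $\gg_{k} N^{2}/(\log N)^{O(k)}$ genuine $k$-term progressions of primes, which is positive for $N$ large; letting $k \to \infty$ gives the theorem. The main obstacle is the relative Szemer\'edi step: engineering the structure/uniformity decomposition so that the bounded part survives even though $\nu$ is unbounded, which is precisely where the pseudorandomness axioms must be deployed with care. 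The second most delicate point is verifying the correlation condition for the Goldston--Y{\i}ld{\i}r{\i}m majorant.
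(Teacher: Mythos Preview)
The paper does not give its own proof of Theorem~A; it is quoted as the main result of \cite{GT} and used as background. Your outline is an accurate high-level sketch of the original Green--Tao argument (the $W$-trick, the Goldston--Y{\i}ld{\i}r{\i}m majorant $\nu$, verification of the linear forms and correlation conditions, the relative Szemer\'edi theorem via a structure/uniformity decomposition and the generalized von Neumann inequality, and the final appeal to Varnavides--Szemer\'edi for the bounded structured part). This is exactly the machinery the present paper invokes in Section~\ref{sec:10} when it adapts the method to its own setting, so there is nothing to correct or contrast.
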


\begin{theorem}[Goldston, Pintz, Y{\i}ld{\i}r{\i}m \cite{GPY1}]
\label{th:B}
If the level $\vartheta$ of distribution of primes exceeds $1/2$, then there exists a positive $d \leq C_1(\vartheta)$, such that there are infinitely many generalized twin prime pairs $n, n + d$.
If $\vartheta > 0.971$ we have $C_1(\vartheta) = 16$.
\end{theorem}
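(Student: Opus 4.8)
The plan is to run the Goldston--Pintz--Y{\i}ld{\i}r{\i}m weighted sieve. Fix an integer $k$ and an \emph{admissible} $k$-tuple $\mathcal{H}=\{h_1,\dots,h_k\}$ of distinct non-negative integers --- admissibility meaning that for every prime $p$ the residues $h_i\bmod p$ omit at least one class, which guarantees that the singular series $\mfS(\mathcal{H})=\prod_p(1-1/p)^{-k}\bigl(1-\nu_{\mathcal{H}}(p)/p\bigr)$ is positive. With a further parameter $\ell\ge 0$, a truncation level $R$, and $P_{\mathcal{H}}(n)=\prod_{i=1}^k(n+h_i)$, introduce the truncated-divisor (Selberg-type) weights
\[
\Lambda_R(n)=\frac{1}{(k+\ell)!}\sum_{\substack{d\mid P_{\mathcal{H}}(n)\\ d\le R}}\mu(d)\Bigl(\log\frac{R}{d}\Bigr)^{k+\ell}.
\]
The goal is to show that for $N$ large the quantity
\[
S=\sum_{N<n\le 2N}\Bigl(\sum_{i=1}^k\theta(n+h_i)-\log 3N\Bigr)\Lambda_R(n)^2
\]
is positive, where $\theta(m)=\log m$ for $m$ prime and $0$ otherwise. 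Positivity forces some $n\in(N,2N]$ with $\sum_i\theta(n+h_i)>\log 3N$; since each term is at most $\log(2N+h_k)<\log 3N$, at least two of $n+h_1,\dots,n+h_k$ are then prime. As there are only finitely many possible differences $h_j-h_i$, one of them, say $d\le\operatorname{diam}\mathcal{H}$, occurs for infinitely many such $n$, producing infinitely many generalized twin prime pairs $n,n+d$; the constant $C_1(\vartheta)$ will be the diameter of the tuple used.

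Next I would evaluate the two moments. Opening the square in $S_1=\sum_{N<n\le 2N}\Lambda_R(n)^2$, swapping summations, and collecting the arithmetic sums over pairs $(d_1,d_2)$ leads, after a double contour integration (equivalently, the combinatorial lemmas of GPY), to $S_1\sim\binom{2\ell}{\ell}(k+2\ell)!^{-1}\mfS(\mathcal{H})\,N(\log R)^{k+2\ell}$. For the prime-detecting moment $S_2=\sum_{N<n\le 2N}\bigl(\sum_{i=1}^k\theta(n+h_i)\bigr)\Lambda_R(n)^2$ one opens the square and is reduced to counting primes $n+h_i$ in arithmetic progressions to moduli $[d_1,d_2]$ with $d_1,d_2\le R$; here the level-of-distribution hypothesis enters, and it is precisely this that permits the choice $R^2=N^{\vartheta-\ve}$ in place of the Bombieri--Vinogradov restriction $R^2=N^{1/2-\ve}$. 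This yields $S_2\sim k\binom{2\ell+2}{\ell+1}(k+2\ell+1)!^{-1}\mfS(\mathcal{H})\,N(\log R)^{k+2\ell+1}$. Since $\log R=(\vartheta/2+o(1))\log N$ while $\log 3N\sim\log N$, the sign of $S$ is governed by whether
\[
\frac{k}{k+2\ell+1}\cdot\frac{2\ell+1}{\ell+1}\cdot\vartheta>1 .
\]

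For any $\vartheta>1/2$ this is achievable: taking $k$ large and then $\ell$ with $1\ll\ell\ll k$ (e.g.\ $\ell=\lfloor\sqrt{k}\rfloor$) makes the left side tend to $2\vartheta>1$, so the inequality holds, and fixing such a $k=k(\vartheta)$ together with any admissible $k$-tuple yields a finite $C_1(\vartheta)$. For the sharp statement one takes $k=6$ with the admissible tuple $\{0,4,6,10,12,16\}$, which has diameter $16$ and, as one checks, is the minimal-diameter admissible $6$-tuple. Plugging $k=6$ and the best integer value $\ell=1$ into the displayed inequality gives only $\vartheta>1$, which does not reach $0.971$; so to lower the threshold one must rerun the whole computation with $(\log(R/d))^{k+\ell}$ replaced by a general smooth weight $F(\log(R/d))$, retain the secondary main terms in both $S_1$ and $S_2$, and optimize over $F$. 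This extremal problem (whose solution can be written in terms of Bessel functions) improves the $k=6$ threshold to $\vartheta>0.971$, whence $C_1(\vartheta)=16$ in that range.

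The main obstacle is the asymptotic evaluation of $S_2$: one needs genuine equidistribution of primes in arithmetic progressions to level exceeding $1/2$, and the error terms arising from the sum over $d_1,d_2$ up to $R$ must be controlled uniformly, which is exactly where the hypothesis $\vartheta>1/2$ is consumed. A secondary technical point is the clean evaluation of the bilinear arithmetic sums $\sum_{d_1,d_2}\mu(d_1)\mu(d_2)[d_1,d_2]^{-1}\bigl(\log(R/d_1)\bigr)^{k+\ell}\bigl(\log(R/d_2)\bigr)^{k+\ell}$ and their prime-detecting analogues; the cleanest route factors the local densities against the singular series $\mfS(\mathcal{H})$ and reduces each sum to an explicit beta-type integral via a double Mellin/contour integral, which is what produces the binomial-coefficient constants above.
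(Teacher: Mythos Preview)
Your argument for the generic case $\vartheta>1/2$ is essentially the paper's: the same weights $\Lambda_R(n;\mathcal H,k+\ell)$, the same two moments (Lemmas~1 and~2 here), and the same crucial inequality
\[
\frac{k}{k+2\ell+1}\cdot\frac{2\ell+1}{\ell+1}\cdot\vartheta>1,
\]
settled by letting $k,\ell\to\infty$ with $\ell\asymp\sqrt{k}$. Nothing to add there.

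The divergence is in the sharpening at $k=6$. You propose replacing $(\log R/d)^{k+\ell}$ by a general smooth test function $F$ and solving the resulting extremal problem, invoking Bessel functions. That is a legitimate line of attack in the GPY circle of ideas, but it is not how the threshold $0.971$ is actually obtained in \cite{GPY1}, and the paper follows \cite{GPY1} here. What is done instead is finite-dimensional: one takes a \emph{linear combination} of the two weights $\Lambda_R(n;\mathcal H,k+\ell)$ for $\ell=0$ and $\ell=1$, computes the resulting quadratic form in the two coefficients, and optimizes. This two-parameter problem yields the explicit algebraic threshold
\[
\vartheta>\frac{4(8-\sqrt{19})}{15}=0.97096\ldots
\]
for $k=6$, which is where the quoted $0.971$ comes from. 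Your smooth-$F$ variational formulation is more general in spirit, but the Bessel-function extremizer you allude to belongs to a different regime (and in any case would not hand you the closed-form $4(8-\sqrt{19})/15$); for the stated numerical bound the concrete two-term combination is both what is used and what is needed.
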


Let us call a $k$-tuple $\mathcal H = \{h_i\}^k_{i = 1}$ consisting of non-negative integers \emph{admissible} if it does not cover all residue classes modulo any prime~$p$.
Theorem~\ref{th:B} was the consequence of the sharper result that if $\vartheta > 1/2$ and $k \geq C_2(\vartheta)$, then any admissible $k$-tuple $\mathcal H$, that is the set $n + \mathcal H$ contains at least two primes for infinitely many values of~$n$.

We say that $\vartheta$ is a level of distribution of primes if for every $A > 0$ and $\ve > 0$ we have
\beq
\sum_{q \leq N^{\vartheta - \ve}} \max_{\substack {a\\ (a,q) = 1}} \biggl| \sum_{\substack{p \leq N\\ p \equiv a(q)}} \log p - \frac{N}{\varphi(q)} \biggr| \, {\ll}_{\ve, A}\, \frac{N}{(\log N)^A}.
\label{eq:1.1}
\eeq

The information that $\vartheta = 1/2$ is an admissible level of distribution of primes, the celebrated Bombieri--Vinogradov theorem, just missed the unconditional proof of the existence of infinitely many generalized twin prime pairs.
However, their theorem was crucial in the proof of
\beq
\Delta = \liminf_{n \to \infty} \frac{p_{n + 1} - p_n}{\log p_n} = 0 \qquad \text{\cite{GPY1}},
\label{eq:1.2}
\eeq
and in its improvement $(\log_2 x = \log\log x)$
\beq
\liminf_{n \to \infty} \frac{p_{n + 1} - p_n }{\sqrt{\log p_n} (\log_2 p_n)^2} < \infty \quad \text{\cite{GPY2}}.
\label{eq:1.3}
\eeq

Assuming the Elliott--Halberstam conjecture \cite{EH} to be abbreviated later by EH, which states that $\vartheta = 1$ is an admissible level, or, even that $\vartheta > 0.971$ we obtained in \cite{GPY1} infinitely many gaps of size at most $16$ in the sequence of primes, that is $C_1(0.971) = 16$.
In the following let $p'$ denote the prime following~$p$.

The aim of the present work is to combine the methods of \cite{GT} and \cite{GPY1} in order to show, even in a stronger form with consecutive primes $p, p' = p + d$, the following result.

\renewcommand{\thetheorem}{\arabic{theorem}}
\setcounter{theorem}{0}

\begin{theorem}
\label{th:1}
If the level $\vartheta$ of distribution of primes exceeds $1/2$, then there exists a positive $d \leq C_1(\vartheta)$ so that there are arbitrarily long arithmetic progressions of primes~$p$ such that $p' = p + d$ is the next prime for each element of the progression.
If $\vartheta > 0.971$ then the above holds for some $d$ with $d \leq 16$.
\end{theorem}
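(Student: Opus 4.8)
The plan is to feed the Goldston--Pintz--Y{\i}ld{\i}r{\i}m (GPY) argument behind Theorem~\ref{th:B} into the transference machinery of \cite{GT}. I first fix $k = C_2(\vartheta)$ (so $k = 6$ when $\vartheta > 0.971$) and choose an admissible $k$-tuple $\mathcal H = \{h_1 < \dots < h_k\}$ of least diameter $H := h_k - h_1$, together with a modulus $W = \prod_{p \le z} p$ and a residue class $b \bmod W$ for which $(b + h_i, W) = 1$ for every $i$ while $(b + m, W) > 1$ for every integer $m \in [h_1, h_k] \setminus \mathcal H$. Such a choice exists for $z = z(k)$ large, one has $H \le C_1(\vartheta)$, and for $k = 6$ one may take $\mathcal H = \{0,4,6,10,12,16\}$, $W = 6$, $b = 1$, so that $H = 16$ under $\vartheta > 0.971$. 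The point of the second condition is a \emph{consecutiveness gadget}: if $n \equiv b \pmod W$ is large and $n + h_i, n + h_j$ ($h_i < h_j$) are primes with no $n + h_\ell$ prime for $i < \ell < j$, then every integer strictly between them is either such an $n + h_\ell$, hence composite, or an $n + m$ with $m \notin \mathcal H$, hence divisible by a prime $\le z$ and composite; so $n + h_i$ and $n + h_j$ are consecutive primes.

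Next, let $\lambda_R(n) = \Lambda_R(n; \mathcal H)$ be the GPY truncated divisor sum attached to $\prod_i (n + h_i)$ at level $R = N^{\vartheta/2 - \ve}$, write $\mathbf{1}_{\mathbb P}$ for the indicator of the primes, and put $S := \sum_{N \le n < 2N,\, n \equiv b\,(W)} \lambda_R(n)^2$. The sharper form of Theorem~\ref{th:B} is established by showing that
\[
\sum_{\substack{N \le n < 2N \\ n \equiv b\,(W)}} \lambda_R(n)^2 \Bigl( \sum_{i=1}^k \mathbf{1}_{\mathbb P}(n + h_i) - 1 \Bigr) \;\gg_\vartheta\; S
\]
for all large $N$, and it is exactly here that $\vartheta > 1/2$ and $k \ge C_2(\vartheta)$ are used. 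Let $\mu$ be the probability measure on $\{n \equiv b\,(W) : N \le n < 2N\}$ with $\mu(\{n\}) = \lambda_R(n)^2/S$. The displayed bound gives $\int \sum_i \mathbf{1}_{\mathbb P}(n + h_i)\,d\mu > 1$, and since $\sum_i \mathbf{1}_{\mathbb P}(n + h_i)$ is an integer in $[0,k]$ this forces $\mu(G) \ge \delta_0$, where $G$ is the set of $n$ carrying two or more primes among $n + \mathcal H$ and $\delta_0 = \delta_0(\vartheta) > 0$. By the consecutiveness gadget every $n \in G$ lies in $G_{ij} := \{n \equiv b\,(W) : N \le n < 2N,\ n + h_i \text{ and } n + h_j \text{ are consecutive primes}\}$ for some $i < j$, so at least one pair $(i_0, j_0)$ (a priori depending on $N$) satisfies $\mu(G_{i_0 j_0}) \ge \delta_0 / \binom{k}{2} =: \delta_1 > 0$.

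It remains to produce long arithmetic progressions inside $G_{i_0 j_0}$. After the $W$-trick, set $\nu(n) := c\,\lambda_R(n)^2$ with $c$ normalising $\nu$ to mean $1 + o(1)$ on a suitable $\mathbb Z_{N'}$. The crucial point --- and the part I expect to be the main obstacle --- is that $\nu$ is a \emph{pseudorandom measure} in the sense of \cite{GT}: it satisfies the linear forms and correlation conditions for all the finite systems of affine-linear forms $n + tL + h_i$ that arise in counting $m$-term progressions. This is proved by the very Goldston--Y{\i}ld{\i}r{\i}m singular-series computations that drive both \cite{GT} and \cite{GPY1}, the extra effort over \cite{GT} being the bookkeeping forced by $\mathcal H$. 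Granting it, put $f := \nu \cdot \mathbf{1}_{G_{i_0 j_0}}$; then $f \le \nu$ and $\mathbb E f = \mu(G_{i_0 j_0}) \ge \delta_1$, so the Szemer\'edi theorem for pseudorandom measures of \cite{GT} gives, for every $m$, many $m$-term progressions $n, n + L, \dots, n + (m-1)L$ with $L \ne 0$ lying entirely in $G_{i_0 j_0}$ (a routine argument makes these genuine integer progressions rather than progressions of residues). For such a progression, $p_t := n + tL + h_{i_0}$ is prime with successor $p_t + d$, where $d := h_{j_0} - h_{i_0}$, and $p_0, \dots, p_{m-1}$ is an $m$-term progression of common difference $L$. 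As only finitely many pairs $(i_0, j_0)$, hence finitely many values $d = h_{j_0} - h_{i_0} \le H$, can occur, and deleting terms turns a length-$m$ configuration for a given $d$ into configurations of all shorter lengths, elementary pigeonholing (first in $N$, then in $m$) isolates a single $d$ serving for progressions of every length; this $d$ satisfies $d \le H \le C_1(\vartheta)$, and $d \le 16$ when $\vartheta > 0.971$, which is the claim.
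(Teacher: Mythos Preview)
There is a genuine gap at the pseudorandomness step, and it is precisely the obstacle the paper isolates in Sections~\ref{sec:2}--\ref{sec:3}. You take $\nu(n)=c\,\lambda_R(n)^2$ with the \emph{same} GPY weight and the \emph{same} level $R=N^{\vartheta/2-\ve}$ that was used to force two primes into $n+\mathcal H$. But verifying the linear forms condition for $m$-term progressions requires controlling correlations of the form $E_n\prod_{t=0}^{m-1}\nu(\psi_t(n))$, which unwinds into divisor sums of total level $R^{O_{k,m}(1)}$; these are only tractable when $R\le N^{c(k,m)}$ for a constant tending to~$0$ as $m\to\infty$. With $R$ pinned near $N^{1/4}$ (and it must be, or the crucial inequality \eqref{eq:5.9} fails and you do not get two primes), the linear forms condition breaks down already for moderate~$m$. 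So ``this is proved by the very Goldston--Y{\i}ld{\i}r{\i}m singular-series computations'' is not correct here: those computations need $R$ small, and your $R$ is large.

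The paper's resolution is to decouple the two roles of the weight. First it runs GPY with the large $R=N^{(\vartheta-\ve)/2}$ but simultaneously shows (Lemmas~\ref{lem:3}--\ref{lem:4}) that the weight is negligible on $n$ for which some $n+h_i$ has a prime factor below $R^\eta$; this yields $\gg N/\log^k N$ integers $n$ with two primes in $n+\mathcal H$ \emph{and} $P^-(P_{\mathcal H}(n))>n^{c_1(k)}$ (Theorem~\ref{th:3}). Then, for the Green--Tao step, it switches to the simpler product weight \eqref{eq:2.3} with a \emph{new, tiny} $R=N^\delta$; on the set just produced one has $\Lambda_R(n+h_i)=\log R$ automatically (no small prime factors), so this set has positive $\nu$-density, and with $R$ this small the pseudorandomness verification goes through as in \cite{GT}, \cite{Zho} (Theorem~\ref{th:5}). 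The almost-prime step is not cosmetic: it is exactly what lets you change~$R$. Your argument has no analogue of it, and without it the two constraints on~$R$ are incompatible.

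A secondary remark: your consecutiveness gadget (choosing $b\bmod W$ so that every $m\in[h_1,h_k]\setminus\mathcal H$ satisfies $(b+m,W)>1$) is a neat alternative to the paper's Section~\ref{sec:9} argument and works nicely for $\{0,4,6,10,12,16\}$ with $W=6$, but you assert its existence for general minimal-diameter admissible $k$-tuples without proof. The paper instead keeps an arbitrary admissible $\mathcal H$, allows a possible extra prime $n+h$ with $h\notin\mathcal H$ between the two produced primes, and kills that event by a $(k{+}1)$-tuple sieve upper bound \eqref{eq:9.5}, which costs only $O(N/\log^{k+1}N)$ and is negligible. Either device would do once the main gap above is fixed.
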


In such a way we can show a positive answer to a weaker form of the question mentioned in the title (where twin primes are substituted by generalized twin primes) under the unproved condition that the exponent $1/2$ in the Bombieri--Vinogradov theorem can be improved to a $\vartheta =  1/2 + \delta$, where $\delta$ is an arbitrarily small, but fixed positive number.
We have to note, however, that such a quantitatively tiny improvement is probably very difficult.
For example, the Generalized Riemann Hypothesis (GRH) trivially implies that $\vartheta = 1/2$ is admissible but it does not provide any admissible level beyond that.
Our analysis, more exactly, Theorem~\ref{th:5} will show that the answer for the question in the title would be positive if one could show $\pi_2(x) \geq cx/\log^2 x$ for the number of twin primes up to~$x$.

\section{The methods of Green--Tao and Goldston--Pintz--Y{\i}ld{\i}r{\i}m}
\label{sec:2}

At the first sight it seems that there is no serious problem in combining the methods of \cite{GT} and \cite{GPY1} in order to show Theorem~\ref{th:A}, since \cite{GT} uses weights from Selberg's sieve
\beq
\Lambda_R(n) := \sum_{d \leq R,\ d \mid n} \mu(d) \log \frac{R}{d},
\label{eq:2.1}
\eeq
and applies the method of Goldston and Y{\i}ld{\i}r{\i}m to construct a pseudo-random measure where the set of primes has positive density.
Similar weights are used in the proof of Theorem~B in \cite{GPY1}.
There are, however, very important differences as well.

1) The weights \eqref{eq:2.1} and the method used by Goldston and Y{\i}ld{\i}r{\i}m \cite{GY} led just to the weaker result $\Delta = 1/4$.
The value of the parameter $R$ was a very small power of the size of the primes~$p \asymp N$.
In this approach of Goldston and Y{\i}ld{\i}r{\i}m primes were searched in admissible tuples
\beq
\mathcal H = \{h_1, \dots, h_k \}, \ \ 0 \leq h_1 < h_2 < \dots < h_k, \ \ h_i \in \mathbb Z
\label{eq:2.2}
\eeq
and they used the weights
\beq
\Lambda_R(n; \mathcal H) := \prod^k_{i = 1} \Lambda_R(n + h_i).
\label{eq:2.3}
\eeq

2) In the work \cite{GPY1} yielding Theorem~\ref{th:B} and \eqref{eq:1.2}, it was crucial to use a maximal possible value $R$ with $R^2 \leq N^{\vartheta - \ve}$ (that is, $R = N^{\frac{\vartheta - \ve}2}$) allowed by the known information about primes.
Additionally, the weights \eqref{eq:2.3} had to be replaced by (the factor $1/(k + \ell)!$ is insignificant here, it serves just for normalization)
\beq
\Lambda_R(n; \mathcal H, k + l) := \frac1{(k\! +\! l)!} \sum_{d \mid P_{\mathcal H}(n)} \mu(d) \log^{k + l} \frac{R}{d}, \ \ P_{\mathcal H}(n) := \prod\limits^k_{i = 1} (n + h_i)
\label{eq:2.4}
\eeq
with $l$ and $k$ being bounded but arbitrarily large, $l = o(k)$.

If one imposed any upper bound on at least one of $k$ or $l$, our method would not lead to $\Delta = 0$ in \eqref{eq:1.2}.
Similarly if we had at our disposal just any level $\vartheta < 1/2$ (say $\vartheta = 0.49999$) we would not be able to deduce $\Delta = 0$ (it would lead however to $\Delta \leq c_0(\vartheta)$, with $c_0 (\vartheta) \to 0$ as $\vartheta \to 1/2$).
In case of Theorem~\ref{th:B}, if one uses $k$ and $l$ with $\min(l,k) \leq C_0$, then beyond $\vartheta > 1/2$ we would need $\vartheta > 1/2 + c(C_0)$.

3) The main difficulty is that the number of generalized twin primes up to~$X$, produced by Theorem~\ref{th:B} did not yield a lower bound for them beyond the very weak implicit bound
\beq
N^{1 - C(k)/\log\log N}.
\label{eq:2.5}
\eeq
This is far from the expected order of magnitude
\beq
\frac{\mathfrak S(d)N}{\log^2 N} \sim \frac{\mathfrak S(d)\pi(N)}{\log N},
\label{eq:2.6}
\eeq
which is still just a density of size $c/\log N$ among the primes.
We also do not know whether the primes $p_n$ satisfying $p_{n + 1} - p_n < \eta \log p_n$ have a positive density if $\eta < 1/4$.
It will turn out, however, that the first part of the proof (Section~\ref{sec:6}) can help to answer this question positively too.
We shall return to this problem in a later work.

\section{The principal idea of the proof}
\label{sec:3}

The seemingly simplest solution, to work with the same weight function during the whole proof, seems to be impossible, due to the reasons mentioned in Section~\ref{sec:2}.
So we will use two different weight functions:

(i) the first one of type \eqref{eq:2.4} to produce the generalized twin prime pairs (with an $R = N^{\vartheta/2 - \ve})$, afterwards

(ii) the second one of type \eqref{eq:2.3} with a different $R = N^\delta$, with a small value of~$\delta$.

However, this still does not solve our problem that we are not able to produce a good lower bound for the number of generalized twin primes.
How do we find the set (or, more precisely the pseudorandom measure~$\nu$) where the generalized twin primes produced in the first step are contained with positive density?
In case of the primes in Green--Tao's theorem this set was the set of ``almost primes'', more precisely the measure~\eqref{eq:2.1}.

It is of crucial importance that the generalized twin prime pairs $n, n + d$ $(n \in N_1)$ were produced in \cite{GPY1} as two primes within a set of type
\beq
\bigl\{n + h_i\bigr\}^k_{i = 1} := n + \mathcal H, \ \ \mathcal H \ \text{ admissible.}
\label{eq:3.2}
\eeq
The main idea is to embed these twin primes into the set of almost prime $k$-tuples with pattern~$\mathcal H$; more precisely to consider just that part $N_2 \subset N_1$ for which the other components $n + h_j$ are almost primes (we could simply say that all components are almost primes since primes are considered themselves as almost primes).
In the usual literature almost primes $P_r$ are considered as numbers with a bounded number of prime factors.
(A $P_r$ number is by definition a number with at most $r$ prime factors.)
The sieve methods producing almost primes usually automatically produce almost primes satisfying
\beq
P^-(n) > n^c, \qquad c > 0, \ \text{ fixed},
\label{eq:3.3}
\eeq
where $P^-(n)$ denotes the least prime factor of~$n$.
On the other hand, numbers with property \eqref{eq:3.3} are automatically $P_r$ almost primes for any $r \geq \lfloor 1/c\rfloor $.
It will be important in the sequel that talking about almost primes we mean always the stronger sense \eqref{eq:3.3}.
Concerning almost primes we have to mention an analogue of the Dickson--Hardy--Littlewood prime $k$-tuple conjecture for almost primes, proved first by Halberstam and Richert \cite{HR} (cf.\ Theorem 7.4)
later in a sharper form by Heath-Brown \cite{Hea}.
Let $\Omega(n)$ denote the number of prime divisors of~$n$.

\renewcommand{\thetheorem}{\Alph{theorem}}
\setcounter{theorem}{2}

\begin{theorem}
\label{th:C}
For any admissible $k$-tuple $\mathcal H$ we have infinitely many values of~$n$ such that $P_{\mathcal H}(n) = \prod\limits^k_{i = 1} (n + h_i)$ is a $P_K$ almost prime with $K \leq C_3(k)$, that is, $\Omega(P_{\mathcal H}(n)) \leq C_3(k)$.
\end{theorem}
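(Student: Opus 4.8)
The plan is to produce the $P_K$ almost prime via a lower-bound sieve (Selberg's $\Lambda^2$-sieve, in the Halberstam--Richert formulation, cf.\ \cite{HR}, Theorem~7.4) applied to the polynomial values $P_{\mathcal H}(n) = \prod_{i=1}^{k}(n+h_i)$ as $n$ ranges over $[1,N]$. First I would fix the admissible $k$-tuple $\mathcal H$ and consider the sieve problem of removing from $[1,N]$ all $n$ for which some $n+h_i$ has a prime factor $p \leq z := N^{c}$ for a small fixed $c>0$; equivalently one sieves the sequence $\mathcal A = \{P_{\mathcal H}(n) : n \leq N\}$ by the primes $p < z$. For each prime $p$ the number of residue classes of $n \bmod p$ killed is $\omega(p) = |\{\, -h_i \bmod p : 1 \le i \le k\,\}|$, which equals $k$ for all $p > h_k$ and is $< p$ for \emph{every} $p$ precisely because $\mathcal H$ is admissible; hence the sieve density function $\omega(p)/p$ satisfies the dimension-$k$ condition $\sum_{p \le x}\omega(p)\log p / p = k\log x + O(1)$, so this is a linear sieve problem of dimension $\kappa = k$.

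The key steps, in order, are: (1) verify the level-of-distribution (fundamental lemma / remainder) hypothesis for $\mathcal A$: the error term $r_d$ in $|\mathcal A_d| = \omega(d)N/d + r_d$ satisfies $|r_d| \le \omega(d)$, so $\sum_{d \le D}\mu^2(d) 3^{\nu(d)}|r_d| = O(D^{1+\varepsilon})$ and any level $D = N^{1-\varepsilon}$ is admissible — this is where the flexibility of working with a polynomial sequence (as opposed to the primes) pays off, since we have essentially perfect distribution. (2) Apply the Selberg lower-bound sieve (or the Jurkat--Richert theorem for dimension $k$) with sifting parameter $z = N^{1/s}$ for a suitable fixed $s = s(k)$; because the dimension is the fixed number $k$, choosing $s$ slightly larger than the relevant constant (of size $O(k)$) makes the main term positive, giving $\#\{n \le N : P^-(P_{\mathcal H}(n)) > N^{1/s}\} \gg_{\mathcal H} N/(\log N)^k > 0$. (3) Translate the lower bound on the least prime factor into a bound on $\Omega$: since $P_{\mathcal H}(n) \le (N + h_k)^k < N^{k+1}$ for large $N$ and every prime factor exceeds $N^{1/s}$, we get $\Omega(P_{\mathcal H}(n)) \le (k+1)s =: C_3(k)$, and since there are infinitely many such $n$ (let $N \to \infty$) the theorem follows.

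The main obstacle — really the only non-routine point — is ensuring that the lower-bound sieve main term stays positive when the sifting limit $z$ is a fixed power $N^{1/s}$ rather than a slowly growing function of $N$. In the classical fundamental lemma one takes $z = N^{o(1)}$ and positivity is automatic, but that yields only $\Omega \le C_3(k)/o(1) \to \infty$, which is useless; to get a \emph{bounded} $K$ one must push $z$ up to a fixed power of $N$, and then the sieve "dimension $k$" genuinely costs something. The resolution is that in dimension $k$ the Jurkat--Richert/Selberg lower bound remains positive provided $s > \beta_k$ for an explicit constant $\beta_k = O(k)$ (this is exactly the content of Halberstam--Richert, Theorem~7.4, and underlies the Heath-Brown refinement \cite{Hea} giving a better $C_3(k)$); so one simply fixes any $s$ exceeding that threshold. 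Everything else — the admissibility computation for $\omega(p)$, the trivial remainder bound, the final estimate of $\Omega$ — is standard sieve bookkeeping.
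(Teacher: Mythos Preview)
Your sketch is correct and is precisely the classical sieve argument behind this result; note, however, that the paper does \emph{not} give its own proof of Theorem~C at all --- it is quoted as a known theorem, with a reference to Halberstam--Richert \cite{HR}, Theorem~7.4 (and the sharper version to Heath-Brown \cite{Hea}). So there is no ``paper's proof'' to compare against beyond that citation, and what you have written is essentially a faithful outline of the cited Halberstam--Richert argument: sieve the sequence $\{P_{\mathcal H}(n):n\le N\}$ in dimension~$k$, use the trivially perfect level of distribution $D=N^{1-\ve}$, and take $z=N^{1/s}$ with $s$ large enough (depending on~$k$) that the lower-bound sieve main term is positive, whence $P^-(P_{\mathcal H}(n))>N^{1/s}$ forces $\Omega(P_{\mathcal H}(n))\le (k+1)s$.

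Two small remarks. First, a terminological slip: the Jurkat--Richert theorem is specifically the dimension-$1$ (linear) sieve; for dimension $k>1$ the relevant lower-bound machinery is the Ankeny--Onishi/Selberg or Diamond--Halberstam--Richert sieve, which is indeed what underlies \cite{HR}, Theorem~7.4. Second, your crude route (lower bound on $P^-$, then the trivial estimate $\Omega\le (k+1)s$) gives a legitimate but somewhat wasteful $C_3(k)$; the cited Theorem~7.4 uses Richert weights to obtain the sharper $C_3(k)=(1+o(1))k\log k$ recorded just after the statement, and Heath-Brown's refinement improves the per-component bound further. None of this affects the validity of your argument for Theorem~C as stated.
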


Halberstam and Richert proved additionally

(i) $P^-(n + h_i) \geq n^{c/k} \quad (i = 1,2,\dots, k), \quad
C_3(k) = (1 + o(1)) k \log k$,

\noindent
where $c = 2/5$ can be chosen if $k$ is large enough,
while Heath-Brown showed that there are infinitely many $n$ values with

(ii) $\max\limits_{i \leq k} \Omega (n + h_i) \leq (1 + o(1)) \dfrac2{\log 2} \log k$.

Theorem~\ref{th:C} raises the problem, whether we can combine it with Theorem~\ref{th:B} in order to show the existence of infinitely many $n$ values such that all elements $n + h_i$ are almost primes and at least two of them are primes
under the hypothesis $\vartheta > 1/2$.
It turns out that it is ``easier'' to prove a common generalization of Theorems~\ref{th:A}, \ref{th:B}, and \ref{th:C} than just of Theorems~\ref{th:A} and \ref{th:B}.
In fact our strategy in proving Theorem~\ref{th:1} will be as follows.

\smallskip
Step 1.
We show that the procedure of Theorem~\ref{th:B} yields in fact for most of the ``good'' values of~$n$ not just two primes in the $k$-tuple considered, but also almost primes for each component $n + h_i$, $i \leq k$.

\smallskip
Step 2.
We show that although the produced generalized twin primes probably do not form a set of positive density within the set of all generalized twin primes, however, the produced $n$ values with at least two primes $n + h_i$ form a set of positive density among all $n$'s satisfying $P^-(n + h_i) > n^{c_1(k)}$ for each $h_i \in \mathcal H$.

\smallskip
Step 3.
To show that the measure $\ol \nu(n)$ derived from the weight function $\Lambda_R(n; \mathcal H)$ with the new parameter $R = N^\delta$ (with a small positive constant~$\delta$) is a pseudorandom measure and the produced tuples form a set of positive measure.

In this way we obtain a common generalization of Theorems~\ref{th:A}, \ref{th:B} and \ref{th:C}, thereby a sharper form of Theorem~\ref{th:1} as follows.

\renewcommand{\thetheorem}{\arabic{theorem}}
\setcounter{theorem}{1}

\begin{theorem}
\label{th:2}
Let us suppose that $\vartheta = 1/2 + \delta > 1/2$ is a level of distribution of primes.
Let $\mathcal H = \{h_i\}^k_{i = 1}$ be an admissible $k$-tuple with $k \geq C_0(\vartheta) = \bigl(2 \lceil 1/2\delta \rceil + 1 \bigr)^2$.
Then we have with some values $b_i \leq C_3(k)$, with at least two of the $b_i$'s being equal to~$1$, arbitrarily long arithmetic progressions of $n$ values such that $\Omega(n + h_i) = b_i$, $P^-(n + h_i) > n^{c_1(k)}$.
If $\vartheta \geq 0.971$ then this is true for $k \geq 6$.
Further, the same is true with at least two consecutive primes in the set $n + \mathcal H$.
\end{theorem}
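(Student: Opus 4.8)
The plan is to follow the three–step strategy announced in Section~\ref{sec:3}, combining the Goldston--Pintz--Y{\i}ld{\i}r{\i}m sieve construction with the transference machinery of Green--Tao. First I would fix an admissible $k$-tuple $\mathcal H$ with $k \geq C_0(\vartheta)$ and work with the weight $\Lambda_R(n;\mathcal H,k+\ell)^2$ of type \eqref{eq:2.4} with $R = N^{\vartheta/2-\ve}$ and $\ell = o(k)$, summed against the indicator that some $n+h_i$, $n+h_j$ are both prime. The known asymptotics for $\sum_n \Lambda_R(n;\mathcal H,k+\ell)^2$ and for $\sum_n \Lambda_R(n;\mathcal H,k+\ell)^2 \log(n+h_i)$ give, for $k$ large relative to $1/\delta$, a positive proportion of $n \leq N$ (after restricting to one residue class modulo a suitable modulus so that $\mathcal H$ stays admissible) for which at least two components are prime. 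The refinement needed for Step~1 is to show that this same weighted count, restricted further by the condition $P^-(n+h_i) > n^{c_1(k)}$ for \emph{every} $i$, loses only a constant factor: this is because imposing a lower bound $n^\eta$ on the least prime factor of each $n+h_i$ costs only a factor depending on $k$ and $\eta$ in the relevant sieve sums (the weight $\Lambda_R$ with $R$ a small power of $N$ already concentrates on almost-primes), while the main term for the ``two primes'' count is of the same order of magnitude. Hence one extracts a set $N_2$, of density $\gg_{k,\delta} 1$ inside the set $\{n : P^-(n+h_i) > n^{c_1(k)}\ \forall i\}$, on which $\Omega(n+h_i) = b_i \leq C_3(k)$ for all $i$ and $b_i = b_j = 1$ for a fixed pair $i,j$; refining the pair to \emph{consecutive} primes is handled exactly as in the passage from Theorem~\ref{th:B} to Theorem~\ref{th:1}, by discarding the $o(1)$-proportion of $n$ for which there is an intervening prime, which the same sieve upper bounds control.

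Step~3 is where the Green--Tao transference principle enters. I would define the measure $\ol\nu(n)$ proportional to $\bigl(\prod_{i=1}^k \Lambda_{R'}(n+h_i)\bigr)^2$ of type \eqref{eq:2.3}, now with the \emph{small} parameter $R' = N^\delta$ (a different, small $\delta$ than the level gain, to be chosen at the end), normalized so that $\mathbb{E}_{n \leq N}\ol\nu(n) = 1+o(1)$. The two inputs required of the Green--Tao machine are: (a) $\ol\nu$ satisfies the linear-forms condition and the correlation condition, so that it is a pseudorandom measure in their sense — this is exactly the Goldston--Y{\i}ld{\i}r{\i}m computation reproduced in \cite{GT}, and goes through verbatim because $R'$ is a fixed small power of $N$; and (b) the function $f(n) = \ol\nu(n)\cdot \mathbf{1}_{N_2}(n)$ is dominated by $\ol\nu$ and has positive mean, $\mathbb{E}_{n \leq N} f(n) \gg_{k,\delta} 1$. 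For (b) the point is that every $n \in N_2$ has $P^-(n+h_i) > n^{c_1(k)}$, so each $\Lambda_{R'}(n+h_i)$ with $R' = N^\delta < n^{c_1(k)}$ picks up only the $d=1$ term and hence $\ol\nu(n) \asymp (\log R')^{2k}$ is of the expected size on $N_2$; combined with the density bound from Step~2 this gives the required lower bound on $\mathbb{E}f$, provided $\delta$ is chosen small enough in terms of $c_1(k)$. Then Szemer\'edi's theorem in the measure-contracted form of Green--Tao produces, for every $m$, arbitrarily long arithmetic progressions $n, n+t, \dots, n+(m-1)t$ lying in $N_2$, which is precisely the assertion of Theorem~\ref{th:2}: along the progression, $\Omega(n+jt+h_i) = b_i$ for all $i$, two of the $b_i$ equal $1$ (consecutive primes in $n+jt+\mathcal H$), and $P^-(n+jt+h_i)$ is large. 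Finally Theorem~\ref{th:1} follows by taking $d = h_j - h_i$ for the two prime components.

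The main obstacle is Step~1, i.e.\ showing that the almost-prime restriction $P^-(n+h_i) > n^{c_1(k)}$ on \emph{all} components can be imposed simultaneously with the ``at least two primes'' condition without destroying the positive-proportion lower bound. The subtlety is that the GPY main term for ``two primes in $n+\mathcal H$'' comes from a delicate cancellation in the Selberg-type sum, and one must check that sieving out the $n$ with a small prime factor in some $n+h_i$ removes only a controlled fraction — this requires a Selberg upper-bound sieve estimate for the GPY weight twisted by such a condition, uniform in $k$, which is the technical heart of the matter. Steps~2 and~3 are then comparatively routine: Step~2 is a direct comparison of two main terms both of order $N/(\log N)$ times constants depending on $k$, and Step~3 is an application of the Green--Tao transference principle to a measure $\ol\nu$ whose pseudorandomness is already established in \cite{GT}, the only new ingredient being the trivial observation that $\ol\nu$ is essentially constant on the almost-prime set $N_2$ when $R' = N^\delta$ with $\delta < c_1(k)$.
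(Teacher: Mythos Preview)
Your three-step outline matches the paper's strategy exactly, and you correctly identify Step~1 --- the concentration of the GPY weight $\Lambda_R(n;\mathcal H,k+l)^2$ on almost-prime tuples, which is precisely Lemmas~\ref{lem:3}--\ref{lem:4} --- as the technical heart. A few points deserve sharpening. The GPY asymptotics do not give a ``positive proportion of $n\le N$'' with two primes in $n+\mathcal H$; they give $\gg N/(\log N)^k$ such $n$ (see \eqref{eq:7.3}--\eqref{eq:7.5}), which is positive density only inside the almost-prime set --- you say this correctly a few lines later, so this is a slip. To fix the entire vector $(b_1,\dots,b_k)$ across the progression one needs the pigeonhole at the end of Section~\ref{sec:7}, which you leave implicit.

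Two places need a bit more than you indicate. First, the pseudorandomness of the \emph{product} measure $\prod_{i=1}^k\Lambda_{R'}(n+h_i)^2$ is not literally in \cite{GT}, which treats a single factor; the paper follows Zhou~\cite{Zho} and carries out the $W$-trick and the modified correlation estimate in Section~\ref{sec:10} (the crucial new points being \eqref{eq:10.16}--\eqref{eq:10.19} and the generalized H\"older inequality in \eqref{eq:10.21}, all of which rely on the boundedness \eqref{eq:10.17} of $\mathcal H$). Your ``goes through verbatim'' is morally right but understates this. Second, for the refinement to \emph{consecutive} primes, a sieve upper bound alone handles intervening primes at positions $h\notin\mathcal H$ (this is \eqref{eq:9.5}), but not at positions $h_\mu\in\mathcal H$ lying strictly between $h_i$ and $h_j$; for those the paper (Section~\ref{sec:9}) selects a pair $(s,t)\in\mathcal T$ with $h_t-h_s$ minimal, so that any such $h_\mu$ is forced by minimality to contribute $o(N/\log^k N)$.
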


\noindent
{\bf Remark.}
In other words, for sufficiently large $k \geq C_0(\vartheta)$ the same multiplicative structure of $n + \mathcal H$ (i.e.\ $\Omega(n + h_i) = b_i$) containing at least two primes and almost primes elsewhere, appears arbitrarily many times with equal distances among the neighboring constellations (i.e.\ the $n$ values, thereby the primes $n + h_i$ and $n + h_j$ forming an arithmetic progression).
We can even require that $n + h_i$ should have the same exponent pattern, where the multiset $(\alpha_1, \dots, \alpha_j)$ is defined as the exponent pattern of $n$ (see \cite{GGPY})
\beq
n = \Pi p^{\alpha_i}_{\nu_i}, \quad p_{\nu_i} \in \mathcal P .
\label{eq:3.4}
\eeq

\smallskip
\noindent
{\bf Remark.}
We can not determine in advance the places of the two primes in the constellation, neither the values of $b_\nu$ apart from $b_\nu \leq C_3(k)$.
However, they will be the same for each translated copy, that is, for each element $n$ of the arithmetic progressions.

\smallskip
\noindent
{\bf Remark.}
If we had $b_i = 1$ $(i = 1, \dots, k)$, that is, $C_3(k) = 1$, this would be Dickson's conjecture about prime $k$-tuples, also called the Hardy--Littlewood prime $k$-tuple conjecture.

\smallskip
\noindent
{\bf Remark.}
The lower bound $k \geq \bigl(2 \lceil 1/2\delta \rceil + 1 \bigr)^2$ can be improved if $\delta$ is not too small.

\section{Further results about generalized twin primes}
\label{sec:4}

The execution of Steps 1 and 2 reveals already interesting properties of the weights \eqref{eq:2.4} and (coupled with other arguments in some cases) yields or helps to yield important consequences, such as the positive  proportion of small gaps of size at most $\eta \log p$ between consecutive primes~$p$ and $p'$ for any $\eta > 0$.
Therefore it is worth formulating the result of these steps as the following separate theorem.

\begin{theorem}
\label{th:3}
Suppose that the level of distribution of primes is $\vartheta = \frac12 + \delta > \frac12$.
If $\mathcal H = \{h_i\}^k_{i = 1}$ is any admissible $k$-tuple with $k \geq C_0(\vartheta) = \bigl(2 \lceil 1/2\delta\rceil + 1\bigr)^2$, then the number of $n \leq N$ for which $n + \mathcal H$ contains at least two consecutive primes and almost primes in each component satisfying $P^-(n + h_\nu) > n^{c_1(k)}$ is at least
\beq
c_1(k, \mathcal H) \frac{N}{\log^k N}
\label{eq:4.1}
\eeq
with some $c_1(k, \mathcal H)$, depending on~$k$ and $\mathcal H$.
Choosing the $k$-tuple with a possibly small diameter we obtain at least
\beq
c_2(k) \frac{N}{\log^k N}, \qquad k = \left(2 \left\lceil \frac1{2\delta}\right\rceil + 1 \right)^2
\label{eq:4.2}
\eeq
generalized twin prime pairs $n$, $n + d$ with a difference
\beq
d \leq C^*(\vartheta) = (1 + o(1)) k\log k = \left(2 + o(1) \right) \delta^{-2} \log \frac1{\delta}.
\label{eq:4.3}
\eeq
If the Elliott--Halberstam conjecture or at least $\vartheta > 0.971$ holds then we obtain at least
\beq
c_1 \frac{N}{\log^6 N}
\label{eq:4.4}
\eeq
generalized twin prime pairs $n$, $n + d$ with a positive $d \leq 16$.
\end{theorem}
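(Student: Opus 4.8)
The plan is to make quantitative the weighted sum arguments of \cite{GPY1}, keeping track of the almost-prime structure of \emph{all} components, not just the two prime ones. Concretely, for an admissible $k$-tuple $\mathcal H$ I would study the weighted sum
\beq
\mathcal S = \sum_{\substack{N < n \le 2N\\ P^-(P_{\mathcal H}(n)) > n^{c_1(k)}}} \Bigl(\sum_{i=1}^k \chi_{\mathcal P}(n+h_i) - 1\Bigr)\, \Lambda_R(n;\mathcal H, k+l)^2,
\label{eq:proof31}
\eeq
where $R = N^{\vartheta/2-\ve}$, $l=o(k)$ is chosen as in \cite{GPY1}, $\chi_{\mathcal P}$ is the indicator of the primes, and the extra condition $P^-(P_{\mathcal H}(n)) > n^{c_1(k)}$ with $c_1(k)$ a small constant is imposed by a preliminary (upper-bound) sieve that can be folded into the weights via a truncated divisor sum, or handled by inserting a further factor. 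The key point is that the unconditional evaluations of $\sum \Lambda_R(n;\mathcal H,k+l)^2$ and $\sum \chi_{\mathcal P}(n+h_i)\Lambda_R(n;\mathcal H,k+l)^2$ from \cite{GPY1} show, for $k \ge C_0(\vartheta) = (2\lceil 1/2\delta\rceil+1)^2$, that $\mathcal S$ is positive and in fact of size $\gg N (\log R)^{k+2l+1}/(k+l)!$ up to constants depending on $k,\mathcal H$; this already forces, for a \emph{positive proportion} of the relevant $n$, the $k$-tuple $n+\mathcal H$ to contain at least two primes while every component has no small prime factor.

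Next I would convert "at least two primes" into "at least two \emph{consecutive} primes." This is where the restriction $P^-(n+h_i) > n^{c_1(k)}$ earns its keep: since every $n+h_i$ is free of prime factors below $n^{c_1(k)}$, the number of prime factors of each $n+h_i$ is $\le 1/c_1(k) \le C_3(k)$, so $n+\mathcal H$ consists of $P_{C_3(k)}$ almost-primes. To get consecutiveness, I would further restrict the interval between two of the prime components: one shows that the contribution to $\mathcal S$ coming from $n$ for which the gap $(p_{j+1}-p_j)$ straddling the relevant prime pair exceeds the diameter of $\mathcal H$ is negligible — equivalently, by choosing $\mathcal H$ and the pair of indices achieving the two primes, there is no prime strictly between them because the window has length $\le \mathrm{diam}(\mathcal H)$ and (outside a sparse exceptional set controlled by a Brun--Titchmarsh or second-moment bound) contains no third prime. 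Selecting the $k$-tuple $\mathcal H$ with smallest possible diameter — which by the Montgomery--Vaughan/Hensley--Richards type estimates has $\mathrm{diam}(\mathcal H) = (1+o(1))k\log k$ — and recalling $k = (2\lceil 1/2\delta\rceil+1)^2$, one gets the admissible difference bound $d \le C^*(\vartheta) = (1+o(1))k\log k = (2+o(1))\delta^{-2}\log(1/\delta)$ claimed in \eqref{eq:4.3}, and under EH or $\vartheta > 0.971$ the numerical choice $k=6$, $d\le 16$. The count $\gg N/\log^k N$ in \eqref{eq:4.1}--\eqref{eq:4.2} comes out because $\Lambda_R(n;\mathcal H,k+l)^2$ has "mass" on a set of density $\asymp 1/\log^k N$ (the sieve weight concentrates on $n$ with $P_{\mathcal H}(n)$ almost prime), and the bound \eqref{eq:4.4} is the same with $k=6$.

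I expect the main obstacle to be not the size of $\mathcal S$ (which is \cite{GPY1} essentially verbatim) but the \emph{simultaneous} imposition of three constraints on the same weighted sum: (a) the level-of-distribution input must still be usable after inserting the lower-bound sieve factor enforcing $P^-(n+h_i) > n^{c_1(k)}$, which shrinks the admissible $R$ and hence costs in the threshold $C_0(\vartheta)$ unless $c_1(k)$ is taken small enough that the loss is absorbed into the $o(k)$ and $o(1)$ terms; and (b) the passage from "two primes" to "two consecutive primes" requires an upper bound for the number of $n \le N$ with three primes in $n+\mathcal H$ that is genuinely smaller than the main term, which one gets from a standard upper-bound sieve applied to the $(k+1)$-dimensional problem but must be checked to beat $c_1(k,\mathcal H)N/\log^k N$. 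Getting the bookkeeping of these three ingredients to close simultaneously — in particular verifying that the admissibility threshold is exactly $(2\lceil 1/2\delta\rceil+1)^2$ and not something slightly larger — is the delicate part; everything else is a quantitative rerun of the GPY machinery.
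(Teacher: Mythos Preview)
Your overall architecture matches the paper's, but two of the three steps have genuine gaps that the paper resolves differently.

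\textbf{Imposing the almost-prime restriction.} You propose to enforce $P^-(P_{\mathcal H}(n)) > n^{c_1(k)}$ by folding a preliminary sieve into the weights. The paper avoids this: it proves instead (Lemmas~3--4) that the portion of $\sum_{n\sim N}\Lambda_R(n;\mathcal H,k+l)^2$ supported on $n$ with $\bigl(P_{\mathcal H}(n),P(R^\eta)\bigr)>1$ is $\ll \eta$ times the full sum, so one simply subtracts it. This is not just cosmetic. Inserting a genuine sieve factor would alter the divisor sums and the contour-integral evaluations, whereas the paper's argument only re-runs the \cite{GMPY} computation with one extra prime $q$ forced to divide $P_{\mathcal H}(n)$. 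Your obstacle (a) --- that the sieve factor might degrade the admissible $R$ and hence the threshold $C_0(\vartheta)$ --- is thereby a non-issue in the paper's approach; the level of distribution enters exactly as in \cite{GPY1}.

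\textbf{From weighted positivity to a count.} You assert that $\Lambda_R(n;\mathcal H,k+l)^2$ has ``mass on a set of density $\asymp 1/\log^k N$,'' but this is precisely what must be proved, and the mechanism is missing from your sketch. The key step is pointwise: once $P^-(P_{\mathcal H}(n)) > R^\eta$ and $R>(3N)^{1/4}$, the number of divisors of $P_{\mathcal H}(n)$ below $R$ is at most $2^{4k/\eta}$, so
\[
\Lambda_R(n;\mathcal H,k+l)^2 \le \Bigl(\tfrac{2^{4k/\eta}}{(k+l)!}(\log R)^{k+l}\Bigr)^2.
\]
Dividing the weighted lower bound $\gg N\log N(\log R)^{k+2l}$ by this pointwise upper bound (and by $\log 3N$) is what produces the count $\gg N/(\log R)^k$. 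Without the almost-prime restriction the weight has no such pointwise bound, so this step and the previous one are genuinely coupled --- which is the real reason ``the restriction earns its keep,'' not the boundedness of $\Omega(n+h_i)$ that you cite.

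\textbf{Consecutive primes.} Your diagnosis in obstacle (b) is off. You say the issue is bounding the number of $n$ with \emph{three primes in $n+\mathcal H$} via a $(k+1)$-dimensional sieve. But three primes at $\mathcal H$-positions is still a $k$-dimensional sifting problem, and the upper-bound sieve gives only $\ll N/\log^k N$ --- the same order as the main term, not smaller; Brun--Titchmarsh on an interval of bounded length does not help either. The paper's argument (Section~9) is different. Among all pairs $(i,j)$ for which the set $\mathcal B(i,j,N)$ of good $n$ has lower density $\gg N/\log^k N$, choose one with $h_j-h_i$ minimal. By minimality, any intermediate $h_\mu\in\mathcal H\cap(h_i,h_j)$ can be prime only on a set of density $o(N/\log^k N)$. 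For an intermediate $h\notin\mathcal H$, the condition $n+h\in\mathcal P$ together with $P^-(P_{\mathcal H}(n))>n^{c_1(k)}$ forces $P^-(P_{\mathcal H\cup\{h\}}(n))>n^{c_1(k)}$, and \emph{now} the sieve is $(k+1)$-dimensional, giving $\ll N/\log^{k+1}N$. So the $(k+1)$-dimensional bound you invoke applies only to the non-$\mathcal H$ intermediate primes, and only because the almost-prime condition on the other components has already been secured; the $\mathcal H$-intermediate case needs the separate extremal argument.
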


\noindent
{\bf Remark.}
It is actually the number \eqref{eq:4.1} of the obtained $k$-tuples which enables the use of the procedure of Green and Tao (with a parameter
$R < N^{c(k, m)}$, where $m$ is the number of terms in the arithmetic progression) to assure the existence of arbitrarily long arithmetic progressions among the $k$-tuples $n + \mathcal H$ with the above properties.

\smallskip
The earliest known written formulation of the twin prime conjecture seems to be due to de Polignac \cite{Pol} from the year 1849.
This conjecture was already about prime pairs with a general even difference~$d$.
This also indicates that the original twin prime conjecture arose already earlier.
If $d = 2$ or $4$ then a pair of primes~$n$, $n + d$ must be clearly a consecutive prime-pair if $n > 3$.
On the other hand, this is probably not true for all prime pairs with $d \geq 6$.
This would follow for example from the special case $k = 3$ of the Dickson--Hardy--Littlewood prime $k$-tuple conjecture, stating that any admissible $k$-tuple contains infinitely many prime $k$-tuples.
The original de Polignac conjecture \cite{Pol} stated that for any even~$d$
\beq
n, \ n + d \ \text{ are consecutive primes for infinitely many values } n.
\label{eq:4.5}
\eeq
A weaker form of this conjecture would be that for any even~$d$
\beq
n, \ n + d \ \text{ are both primes for infinitely many values } n.
\label{eq:4.6}
\eeq

As long as in general nearly nothing was known about either \eqref{eq:4.5} or \eqref{eq:4.6} there was not much reason to discuss the difference between \eqref{eq:4.5} and \eqref{eq:4.6}.
Now, in view of our Theorem~\ref{th:B}, under the plausible hypothesis $\vartheta > 1/2$ it seems to be worth to discuss these aspects.
To formulate the results more easily we introduce the following definitions.

\begin{defn}
\label{def:1}
We will call an even number $d$ a de Polignac number in the strong sense (briefly strong de Polignac number) if \eqref{eq:4.5} is true for it.
\end{defn}

\begin{defn}
\label{def:2}
We will call an even number $d$ a de Polignac number in the weak sense (briefly weak de Polignac number) if \eqref{eq:4.6} is true for it.
\end{defn}

Let us denote the set of all strong and weak de Polignac numbers, respectively, by $\mathcal D_s$ and $\mathcal D_w$.
The fact that for $k > C_0(\vartheta)$  every admissible $k$-tuple $\mathcal H$ contains infinitely many times at least two primes implies that the set $\mathcal D_w$ of weak de Polignac numbers has a positive lower density depending on~$\vartheta$.
However, the earlier results could not show that apart from the smallest element $d_0$ of $\mathcal D_w$ any of the others would satisfy \eqref{eq:4.5} due to the possible existence of another prime between $n$ and $n + d$, if $d \in \mathcal D_w$, $d > d_0$.
In such a way we did not have any more information about $\mathcal D_s$ beyond $\mathcal D_s \neq \emptyset$.
This is still highly non-trivial (and still not known unconditionally) since without any hypothesis the following three assertions are clearly equivalent:

(i) there exists at least one strong de Polignac number $(\mathcal D_s \neq \emptyset)$,

(ii) there exists at least one weak de Polignac number $(\mathcal D_w \neq \emptyset)$,

(iii) there are infinitely many bounded gaps between consecutive primes, that is, $\liminf\limits_{n \to \infty} (p_{n + 1} - p_n) < \infty$.

Now, Theorem~\ref{th:3} changes this great difference about our present knowledge of the cardinality of the weak and strong de Polignac numbers, namely, that under the assumption $\vartheta > 1/2$
\beq
\ul d(\mathcal D_w) \geq c(\vartheta), \ \text{ whereas } \ |\mathcal D_s| \geq 1,
\label{eq:4.7}
\eeq
where $\ul d(X)$ denotes the lower density, $|X|$ the cardinality of a set~$X$.

In fact, Theorem~\ref{th:3} implies with some relatively easy elementary arguments the following

\begin{theorem}
\label{th:4}
Let us suppose that the primes have a level of distribution $\vartheta = \frac12 + \delta$.
Let $k = \bigl(2 \lceil 1/2\delta \rceil + 1 \bigr)^2$,
$P := P(k) := \prod\limits_{p \leq k} p$.
Then Theorem~\ref{th:3} is true in the stronger form that $n + \mathcal H$ contains at least two consecutive primes.
As a consequence of this we have ($\varphi$ is Euler's totient function)
\beq
\ul d(\mathcal D_w) \geq \ul d(\mathcal D_s) \geq \frac{\varphi(P)}{Pk(k - 1)} \sim \frac{e^{-\gamma}}{k^2 \log k} \sim \frac{\delta^4}{2e^\gamma \log (1/\delta)}
\label{eq:4.8}
\eeq
as $\delta \to 0$.
If EH or at least $\vartheta > 0.971$ is true then
\beq
\ul d(\mathcal D_w) \geq \ul d(\mathcal D_s) \geq \frac2{225}.
\label{eq:4.9}
\eeq
\end{theorem}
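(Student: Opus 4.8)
The plan is to derive the combinatorial consequence \eqref{eq:4.8}--\eqref{eq:4.9} from Theorem~\ref{th:3}, the main additional input being the strengthening to \emph{consecutive} primes together with a counting argument that converts density of $k$-tuples into density of de Polignac numbers. First I would recall the output of Theorem~\ref{th:3}: for the chosen admissible $\mathcal H = \{h_i\}_{i=1}^k$ there are $\gg N/\log^k N$ values $n \le N$ such that $n + \mathcal H$ contains two consecutive primes $n + h_i$, $n + h_j$ with $i < j$, and every component satisfies $P^-(n + h_\nu) > n^{c_1(k)}$. Since there are only $\binom{k}{2}$ choices for the unordered pair $\{i,j\}$, by pigeonhole one fixed pair $\{i,j\}$ occurs for $\gg N/(k(k-1)\log^k N)$ of these $n$; for each such $n$ the difference $d := h_j - h_i$ is a fixed even number which, by the consecutivity, lies in $\mathcal D_s$ (hence in $\mathcal D_w$). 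This already shows $\mathcal D_s \neq \emptyset$; the work is to get the density bound of the right shape.

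The key step is to count, for this fixed pair, \emph{how many distinct integers $m$} arise as $m = n + h_i$, i.e.\ to show that the primes $p = n + h_i$ so produced have positive lower density among all primes, and moreover that the \emph{residue class} of $d$ modulo $P = \prod_{p \le k} p$ is forced. The point of the condition $P^-(n + h_\nu) > n^{c_1(k)}$ is that for large $n$ none of the components $n + h_\nu$ is divisible by any prime $p \le k$; in particular $n$ avoids a fixed set of residue classes mod $P$, namely $n \not\equiv -h_\nu \pmod p$ for every $p \le k$ and every $\nu$. Because $\mathcal H$ is admissible this excluded set is a proper subset of residues mod each $p \le k$, so the surviving $n$ occupy a union of residue classes mod $P$ of relative size $\prod_{p\le k}(1 - \#\{h_\nu \bmod p\}/p)$; choosing $\mathcal H$ with small diameter one arranges that the $h_\nu$ fill as few residues as possible, and the clean lower bound $\varphi(P)/P$ for this density is what produces the factor $\varphi(P)/P$ in \eqref{eq:4.8}. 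Dividing by the extra $k(k-1)$ from the pigeonhole over pairs gives the stated $\varphi(P)/(Pk(k-1))$; the asymptotics $\varphi(P)/P \sim e^{-\gamma}/\log k$ (Mertens) and $k \sim (2\delta)^{-2}$, $\log k \sim 2\log(1/\delta)$ then yield the two displayed $\delta$-asymptotics.

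To finish, I would argue that the set of even numbers $d$ obtained this way has lower density at least the quantity above: the $\gg N/\log^k N$ values of $n$ split over boundedly many pairs $\{i,j\}$ and boundedly many admissible patterns, and each $n$ produces a pair of consecutive primes $n+h_i, n+h_j = (n+h_i)+d$; since distinct $n$ give distinct smaller primes $p = n + h_i$, and the count of such $p$ up to $N$ is $\gg N/\log^k N$, a standard argument (each fixed $d$ can be hit by at most one prime $p$ in a pair $p, p+d$ of \emph{consecutive} primes for a given $p$) shows the set of realized $d$'s, intersected with $[1,X]$, has size $\gg$ the claimed density times $X$; translating from "primes producing $d$" back to "density of $d$ in the even integers" uses that the realized $d$ are confined to the residue classes mod $P$ coprime-type structure described above, giving exactly the lower density $\varphi(P)/(Pk(k-1))$. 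For the explicit \eqref{eq:4.9} under $\vartheta \ge 0.971$ one takes $k = 6$, so $P = 2\cdot3\cdot5 = 30$, $\varphi(P) = 8$, and $\varphi(P)/(Pk(k-1)) = 8/(30\cdot 30) = 8/900 = 2/225$.

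The main obstacle I expect is Step 2's bookkeeping: ensuring the consecutivity really forces $d \in \mathcal D_s$ for \emph{infinitely many} $n$ (not just positive proportion up to each $N$) and that passing from "density of admissible $n$" to "lower density of the set $\mathcal D_s$ of differences" does not lose more than the stated $k(k-1)$ factor — in particular that the $n$-count genuinely spreads over $\gg N/\log^k N$ distinct primes $p = n+h_i$ rather than concentrating, and that the small-diameter choice of $\mathcal H$ can be made to pin down the residue class of $d$ mod $P$ so cleanly that the relative density is exactly $\varphi(P)/P$. This is "relatively easy elementary" as the excerpt says, but it is where all the constants are actually pinned down, so it is the part deserving care.
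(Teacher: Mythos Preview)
Your proposal has a genuine gap that makes the argument fail as written. You work throughout with a \emph{single fixed} admissible $k$-tuple $\mathcal H$ (you even say ``choosing $\mathcal H$ with small diameter''), and your pigeonhole over pairs $\{i,j\}$ produces \emph{one} fixed difference $d = h_j - h_i$. All the work you do after that --- counting how many $n$'s realize this $d$, analyzing residue classes of $n$ mod $P$, spreading over $\gg N/\log^k N$ primes --- concerns the frequency with which this single $d$ (or at most the $\binom{k}{2}$ differences inside $\mathcal H$) is realized. But the differences $h_j - h_i$ are all bounded by the diameter of $\mathcal H$, a fixed constant depending only on $k$; so your argument can only ever produce a \emph{finite} set of de Polignac numbers, never a set of positive lower density in $\mathbb N$. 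The passage ``the set of realized $d$'s, intersected with $[1,X]$, has size $\gg$ the claimed density times $X$'' is where the argument collapses: there is nothing in your setup that produces $d$'s tending to infinity.

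The paper's actual argument (Section~11) is completely different and is where the factor $\varphi(P)/(Pk(k-1))$ genuinely comes from. One \emph{varies} $\mathcal H$ over all $k$-subsets of $\mathcal M = \{m \le N : (m,P)=1\}$, a set of size $M = N\varphi(P)/P$; every such $\mathcal H$ is automatically admissible, and by the consecutive-primes version of Theorem~\ref{th:3} each contributes at least one strong de Polignac number among its $\binom{k}{2}$ differences. This gives at least $\binom{M}{k}$ de Polignac numbers below $N$ counted with multiplicity. A fixed $d$ can occur as a difference in at most $(M-1)\binom{M-2}{k-2}$ such $k$-tuples (choose the larger endpoint, then the remaining $k-2$ elements), so the number of \emph{distinct} strong de Polignac numbers up to $N$ is at least
\[
\frac{\binom{M}{k}}{(M-1)\binom{M-2}{k-2}} = \frac{M}{k(k-1)} = \frac{\varphi(P)}{Pk(k-1)}\,N,
\]
which is exactly \eqref{eq:4.8}. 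The factor $\varphi(P)/P$ is the density of $\mathcal M$ in $[1,N]$, not a density of $n$'s in residue classes as you suggested; and the $k(k-1)$ comes from the double-counting bound on tuples containing a given $d$, not from pigeonhole over pairs inside a single $\mathcal H$. For \eqref{eq:4.9} one takes $k=6$, $P=30$, $\varphi(P)=8$, giving $8/(30\cdot 30) = 2/225$ --- that final arithmetic you did have right.
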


\noindent
{\bf Remark.}
The above theorem shows that under the Elliott--Halberstam conjecture at least approximately 1.78 percentage of all even numbers appear infinitely often as the difference of two consecutive primes, thereby satisfying the original de Polignac conjecture \eqref{eq:4.5}.
Furthermore, the set of such even numbers has a positive lower density for any level $\vartheta > 1/2$ of the distribution of primes.

\section{Preparation for Step~1.
Notation}
\label{sec:5}

In this section we will make preparations for the proof of Theorem~\ref{th:3}, which is a refinement of Theorem~\ref{th:B}.
Since we will follow the version of \cite{GMPY} we will reintroduce here its notation and describe the necessary changes to obtain the same result with the extra requirement that (in case of $\vartheta > 1/2$) apart from the two primes we obtain almost primes with $P^-(n + h_i) > n^{c_1(k)}$ in all components.
We emphasize here that a different notation will be used in Section~\ref{sec:10}, when we describe the changes in the procedure of \cite{GT}, since unfortunately the same variables $k, h_1, \dots, h_k$ refer to different quantities in the works \cite{GMPY} and \cite{GT}.

Let $n \sim N$ mean $N < n \leq 2N$, let $P$ denote the set of primes, and let $k, l$ be arbitrary bounded integers,
\beq
H \ll \log N \ll \log R \ll \log N, \qquad H \to \infty,
\label{eq:5.1}
\eeq
\beq
\mathcal H = \bigl\{ h_i \bigr\}^k_{i = 1} \subseteq [0, H], \quad h_i < h_{i + 1}, \ \ h_i \in \mathbb Z,
\label{eq:5.2}
\eeq
where $\mathcal H$ is an admissible $k$-tuple.
For the aim of later use in other works we allow here $H \to \infty$, whereas for the present work it would be enough to suppose $H \leq C(k)$.
Constants $c, C, \ve$ may be different at different occurrences and they might depend on $k$, $l$ and $\vartheta$, as well as the constants implied by $\ll$, and $O$ symbols (without indicating the dependence); $\log_\nu x$ denotes the $\nu$-fold iterated logarithmic function.
Differently from earlier parts of the work we use the notation $\Omega(p)$ and its multiplicative extension $\Omega(d)$ for squarefree $d$ to denote the set of those numbers $n$ for which
\beq
d \mid P_{\mathcal H}(n) := \prod^k_{i = 1} (n + h_i) \Longleftrightarrow n \in \Omega(d).
\label{eq:5.3}
\eeq
Further, in accordance with \eqref{eq:2.4} we define
\beq
\Lambda_R(n; \mathcal H, a) := \sum_{n \in \Omega(d)} \lambda_R(d; a), \qquad
\lambda_R(d; a) := \frac{\mu(d)}{a!} \left(\left(\log \frac{R}{d} \right)_+\right)^a
\label{eq:5.4}
\eeq
where $y_+ = y$ for $y \geq 0$ and $0$ otherwise.
Let $\theta(n) = \log n$ if $n \in P$ and $0$ otherwise.
The singular series is defined, as usual, by
\beq
\mathfrak{S}(\mathcal H) = \prod_p \left(1 - \frac{|\Omega(p)|}{p}\right) \left(1 - \frac1{p}\right)^{-k},
\label{eq:5.5}
\eeq
where $|X|$ denotes the cardinality of the set~$X$.
Theorem~\ref{th:B} was the immediate consequence of Lemma~\ref{lem:1} and the case $h \in \mathcal H$ of Lemma~\ref{lem:2}.

\begin{lemma}
\label{lem:1}
For a sufficiently large $C > C(k,l)$ and $R \leq \sqrt{N}/(\log N)^C$ we have
\begin{align}
\label{eq:5.6}
S_0(\mathcal H) &= \sum_{n \sim N} \Lambda_R(n; \mathcal H, k + l)^2 \\
&= \frac{\mathfrak S(\mathcal H)}{(k + 2l)!} {2l\choose l} N(\log R)^{k + 2l} + O\bigl(N(\log N)^{k + 2l - 1}(\log_2 N)^c\bigr). \notag
\end{align}
\end{lemma}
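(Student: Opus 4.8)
The plan is to evaluate the sum $S_0(\mathcal H)$ by expanding the square, interchanging summation, and reducing everything to a divisor sum that can be analyzed via a contour-integral (Perron-type) representation of the truncated logarithm powers. Writing $\Lambda_R(n;\mathcal H, k+l)^2 = \sum_{d_1, d_2} \lambda_R(d_1; k+l)\lambda_R(d_2; k+l) \mathbf 1_{n \in \Omega(d_1) \cap \Omega(d_2)}$, I would first carry out the inner sum over $n \sim N$. Since $n \in \Omega(d_1)\cap\Omega(d_2)$ is a union of residue classes modulo $[d_1,d_2]$, the count is $\dfrac{|\Omega([d_1,d_2])|}{[d_1,d_2]}N + O(|\Omega([d_1,d_2])|)$, where $|\Omega(\cdot)|$ is multiplicative and $|\Omega(p)| = k$ for $p \nmid \prod_{i<j}(h_j - h_i)$. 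The error term, summed against $|\lambda_R(d_i;k+l)|$, contributes $O(R^2 (\log R)^{2(k+l)}) = O(N(\log N)^{-1})$ by the hypothesis $R \le \sqrt N/(\log N)^C$ with $C$ large, which is where the restriction on $R$ is used.

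Next I would handle the resulting main term $N \sum_{d_1, d_2} \dfrac{\mu(d_1)\mu(d_2)}{((k+l)!)^2} \dfrac{|\Omega([d_1,d_2])|}{[d_1,d_2]} \bigl(\log\tfrac{R}{d_1}\bigr)_+^{k+l}\bigl(\log\tfrac{R}{d_2}\bigr)_+^{k+l}$. The standard device (as in \cite{GPY1}, \cite{GMPY}) is to write $\frac{1}{a!}(\log\frac{R}{d})_+^a = \frac{1}{2\pi i}\int_{(c)} \frac{R^s}{d^s}\frac{ds}{s^{a+1}}$ with $a = k+l$, turning the double sum into a double contour integral of $R^{s_1+s_2}\, s_1^{-(k+l+1)} s_2^{-(k+l+1)}\, F(s_1,s_2)$, where $F(s_1,s_2) = \sum_{d_1,d_2}\frac{\mu(d_1)\mu(d_2)|\Omega([d_1,d_2])|}{[d_1,d_2]d_1^{s_1}d_2^{s_2}}$. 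This Dirichlet series factors as an Euler product; comparing with powers of $\zeta$ one finds $F(s_1,s_2) = \dfrac{\zeta(1+s_1+s_2)^{k}}{\zeta(1+s_1)^{k}\zeta(1+s_2)^{k}} G(s_1,s_2)$ with $G$ holomorphic and bounded in a neighborhood of the origin and $G(0,0) = \mathfrak S(\mathcal H)$. Shifting contours and picking up the residue from the pole of $\zeta(1+s_1+s_2)^{k}$ at $s_1 + s_2 = 0$, the main term emerges as $\dfrac{\mathfrak S(\mathcal H)}{(k+2l)!}\binom{2l}{l} N(\log R)^{k+2l}$ after the elementary beta-type integral $\frac{1}{2\pi i}\int \frac{(1-\xi)^{k+2l}}{\xi^{k+l+1}}\,d\xi$ type evaluation gives the combinatorial factor; the remaining terms from the contour shift and from the analytic factor $G$ away from the origin are absorbed into the error $O(N(\log N)^{k+2l-1}(\log_2 N)^c)$, the $(\log_2 N)^c$ coming from the classical zero-free region bound $1/\zeta(1+it) \ll \log(|t|+2)$ used on the truncated contours.

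The main obstacle is the careful bookkeeping in the contour-shift step: one must choose the truncation height of the integrals and the contour offsets (of size $\asymp 1/\log R$) so that the pole at $s_1 + s_2 = 0$ is correctly isolated, verify that $G(s_1,s_2)$ — equivalently the Euler product $\prod_p\bigl(1 - |\Omega(p)|/p\bigr)$-type correction — converges and is uniformly bounded in the relevant region including at the exceptional primes dividing $\prod_{i<j}(h_j-h_i)$, and control the contribution of $\zeta$ on the shifted lines without losing more than a power of $\log_2 N$. None of this is conceptually new — it is the argument of \cite{GPY1} and \cite{GMPY} — so here I would simply invoke that computation, noting that the presence of the admissible tuple $\mathcal H$ only changes $|\Omega(p)|$ at finitely many primes and hence only affects $G(0,0)$, producing exactly the singular series $\mathfrak S(\mathcal H)$ in \eqref{eq:5.5}.
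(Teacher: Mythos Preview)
Your proposal is correct and aligns with the paper's treatment: the paper does not supply a proof of Lemma~\ref{lem:1} at all but simply states it as a known result from \cite{GMPY} (and \cite{GPY1}), and your sketch reproduces exactly that contour-integral argument before invoking the same references. The only quibble is that your displayed integrand $\frac{(1-\xi)^{k+2l}}{\xi^{k+l+1}}$ is not quite the right combinatorial kernel---the correct residue computation (compare \eqref{eq:6.15} with $a=1$) involves $\frac{(1+\xi)^{2l}}{\xi^{l+1}}$---but you flag it as a ``type'' evaluation and defer to \cite{GMPY}, so this is cosmetic.
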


\begin{lemma}
\label{lem:2}
If the level of distribution of primes is $\vartheta$, $\ve > 0$, then for $R \leq N^{(\vartheta - \ve)/2}$ we have for $R \leq H$, $m = 1$ if $h \in \mathcal H$, $m = 0$ if $h \notin \mathcal H$
\begin{align}
\label{eq:5.7}
S_1(\mathcal H) :&= \sum_{n \sim N} \theta(n + h) \Lambda_R(n; \mathcal H, k + l)^2\\
 &= \frac{\mathfrak S(\mathcal H\cup \{h\})}{(k + 2l + m)!} {2(l + m)\choose l + m} N(\log R)^{k + 2l + m}\notag\\
 &\quad + O\bigl(N(\log N)^{k + 2l + m - 1}(\log_2 N)^c\bigr).\notag
\end{align}
\end{lemma}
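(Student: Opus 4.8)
The plan is to reproduce the contour-integral evaluation of bilinear Selberg-sieve sums from \cite{GPY1}, in the streamlined shape of \cite{GMPY}; the only new ingredient is the single Euler factor that is modified by the presence of the weight $\theta(n+h)$. First I would expand the square and interchange the order of summation,
\beq
S_1(\mathcal H) = \sum_{d_1,d_2 \leq R}\lambda_R(d_1;k+l)\,\lambda_R(d_2;k+l)\sum_{\substack{n \sim N\\ n \in \Omega([d_1,d_2])}}\theta(n+h),
\eeq
and analyse the inner sum for a fixed squarefree modulus $d = [d_1,d_2]$. By the Chinese Remainder Theorem, as $n$ runs over $\Omega(d)$ the shifted variable $n+h$ runs over exactly $\prod_{p \mid d}|\Omega(p)|$ residue classes modulo $d$; since $d \leq R^{2} \leq N^{\vartheta - \ve} < N < n+h$, on any such class that is not coprime to $d$ the value $n+h$ is always composite, so only the
\beq
g(d) := \prod_{p \mid d}\bigl(|\Omega(p)| - \varrho_p\bigr)
\eeq
classes coprime to $d$ --- where $\varrho_p = 1$ if $h \equiv h_i \pmod p$ for some $i$, and $\varrho_p = 0$ otherwise --- contribute, each of them $N/\varphi(d)$ up to the error term of the prime number theorem for the relevant progression in $(N+h, 2N+h]$. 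Note that $|\Omega(p)| - \varrho_p = |\Omega_{\mathcal H \cup \{h\}}(p)| - 1$, which equals $k-m$ for all but finitely many primes~$p$.

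This splits $S_1(\mathcal H)$ as $\mathrm{MT} + \mathcal E$ with
\beq
\mathrm{MT} = N \sum_{d_1,d_2 \leq R}\frac{\lambda_R(d_1;k+l)\,\lambda_R(d_2;k+l)}{\varphi([d_1,d_2])}\,g([d_1,d_2]),
\eeq
and, using $|\lambda_R(d_i;k+l)| \ll (\log R)^{k+l}$, $g(d) \leq k^{\omega(d)}$ and $\#\{(d_1,d_2) : [d_1,d_2] = d\} = 3^{\omega(d)}$,
\beq
\mathcal E \ll (\log R)^{2(k+l)}\sum_{d \leq R^{2}}(3k)^{\omega(d)}\max_{(a,d)=1}\Bigl|\sum_{\substack{N+h < m \leq 2N+h\\ m \equiv a\,(d)}}\theta(m) - \frac{N}{\varphi(d)}\Bigr|.
\eeq
A Cauchy--Schwarz splitting of the $d$-sum --- the trivial bound $\ll N(\log N)/\varphi(d)$ for one factor, the hypothesis \eqref{eq:1.1} (applied at the endpoints $N+h$ and $2N+h$) for the other, together with $R^{2} \leq N^{\vartheta - \ve}$ --- gives $\mathcal E \ll_{A} N/(\log N)^{A}$ for every $A$. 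This is the only place where the level of distribution~$\vartheta$ is used, and it is the reason for the restriction $R \leq N^{(\vartheta-\ve)/2}$; the estimate is otherwise routine.

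The substance lies in evaluating $\mathrm{MT}$. Writing $\lambda_R(d;k+l) = \mu(d)\,\tfrac{1}{2\pi i}\int_{(c)}(R/d)^{s}\,s^{-(k+l+1)}\,ds$ recasts $\mathrm{MT}$ as $\tfrac{N}{(2\pi i)^{2}}\iint R^{s_1+s_2}\,F(s_1,s_2)\,s_1^{-(k+l+1)}s_2^{-(k+l+1)}\,ds_1\,ds_2$, where $F(s_1,s_2) := \sum_{d_1,d_2}\tfrac{\mu(d_1)\mu(d_2)\,g([d_1,d_2])}{\varphi([d_1,d_2])}d_1^{-s_1}d_2^{-s_2}$ has the Euler product $\prod_p\bigl(1 - \tfrac{g(p)}{p-1}(p^{-s_1}+p^{-s_2}-p^{-s_1-s_2})\bigr)$. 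Since $g(p) = k-m$ for all but finitely many~$p$, this factors as
\beq
F(s_1,s_2) = \frac{\zeta(1+s_1+s_2)^{\,k-m}}{\zeta(1+s_1)^{\,k-m}\,\zeta(1+s_2)^{\,k-m}}\,G(s_1,s_2)
\eeq
with $G$ holomorphic and bounded near the origin and $G(0,0) = \mathfrak S(\mathcal H \cup \{h\})$ (checked factor by factor, using $\mathcal H \cup \{h\} = \mathcal H$ when $h \in \mathcal H$). Hence $F(s_1,s_2)\,s_1^{-(k+l+1)}s_2^{-(k+l+1)}$ is, near the origin, $\mathfrak S(\mathcal H \cup \{h\})\,(s_1+s_2)^{-(k-m)}\,s_1^{-(l+1+m)}\,s_2^{-(l+1+m)}$ times a holomorphic function, and feeding this into the standard evaluation
\beq
\frac{1}{(2\pi i)^{2}}\iint\frac{R^{s_1+s_2}}{(s_1+s_2)^{a}\,s_1^{\,b}\,s_2^{\,b}}\,ds_1\,ds_2 = \binom{2(b-1)}{b-1}\,\frac{(\log R)^{\,a+2b-2}}{(a+2b-2)!}
\eeq
(proved by representing $(s_1+s_2)^{-a}$ as a Laplace integral; cf.\ \cite{GMPY}) with $a = k-m$, $b = l+1+m$ gives
\beq
\mathrm{MT} = \frac{\mathfrak S(\mathcal H \cup \{h\})}{(k+2l+m)!}\binom{2(l+m)}{l+m}\,N\,(\log R)^{k+2l+m} + O\bigl(N(\log N)^{k+2l+m-1}(\log_2 N)^{c}\bigr),
\eeq
the lower-order terms and the $H$-dependence of the implied constants being absorbed into the error term. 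Combined with the bound on $\mathcal E$, this is the claim.

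The step I expect to demand the most care is this last one: making the contour-shift evaluation of $\mathrm{MT}$ uniform as $\mathcal H$ (equivalently~$H$) grows --- controlling $G$ together with its derivatives and the effect of the truncation $d \leq R$, so that the secondary contributions are genuinely of size $O(N(\log N)^{k+2l+m-1}(\log_2 N)^{c})$ --- and verifying that the single altered local factor $g(p) = |\Omega_{\mathcal H \cup \{h\}}(p)| - 1$ raises the order of the pole of the integrand at each of $s_1 = 0$, $s_2 = 0$ by $m$ while lowering that of the $(s_1+s_2)$-pole by $m$, for a net gain of $m$ in the exponent of $\log R$. This net gain is exactly what accounts for the one extra power of $\log R$ in the case $h \in \mathcal H$. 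All the rest is a faithful transcription of \cite{GMPY}, and the distribution of primes in arithmetic progressions enters only through the harmless remainder~$\mathcal E$.
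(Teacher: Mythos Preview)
Your sketch is correct and is precisely the argument of \cite{GMPY} (which in turn condenses \cite{GPY1}); the present paper does not give an independent proof of Lemma~\ref{lem:2} but simply quotes it from \cite{GMPY} as input. The Euler-product identification $g(p)=|\Omega_{\mathcal H\cup\{h\}}(p)|-1$, the factorisation $F=\zeta(1+s_1+s_2)^{k-m}\zeta(1+s_1)^{-(k-m)}\zeta(1+s_2)^{-(k-m)}G$ with $G(0,0)=\mathfrak S(\mathcal H\cup\{h\})$, and the residue computation with $a=k-m$, $b=l+1+m$ all check, and your handling of the remainder $\mathcal E$ via Cauchy--Schwarz plus the level-$\vartheta$ hypothesis is the standard one.

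One presentational difference worth noting: where you treat the two cases $h\in\mathcal H$ and $h\notin\mathcal H$ uniformly through the single local density $g(p)=|\Omega_{\mathcal H\cup\{h\}}(p)|-1=k-m$ (for large $p$), the paper --- when it revisits this mechanism inside the proof of Lemma~\ref{lem:5}, cf.\ \eqref{eq:6.28}--\eqref{eq:6.33} --- separates the case $h\in\mathcal H$ by the observation that for $n+h$ prime and $d\le R<n$ one has $d\mid P_{\mathcal H}(n)\Longleftrightarrow d\mid P_{\mathcal H\setminus\{h\}}(n)$, so that the sum is literally the unweighted sum of Lemma~\ref{lem:1} for the $(k-1)$-tuple $\mathcal H\setminus\{h\}$ after the ``translation $k\to k-1$, $l\to l+1$''. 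That shortcut recycles Lemma~\ref{lem:1} verbatim; your unified route recomputes the residue once for both cases. Either way one lands on $\binom{2(l+m)}{l+m}(\log R)^{k+2l+m}/(k+2l+m)!$, and the contour-shift error analysis (the part you flag as demanding the most care) is indeed exactly the computation \eqref{eq:6.21}--\eqref{eq:6.24} carried out in \cite{GMPY}.
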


The proof of Theorem~\ref{th:B} follows from these lemmas by
\begin{align}
\label{eq:5.8}
&\sum_{n \sim N} \biggl(\sum_{h \in \mathcal H} \theta(n + h) - \log 3N \biggr) \Lambda_R(n; \mathcal H, k + l)^2 \\
&= \! \frac{\mathfrak S(\mathcal H)}{(k\! +\! 2l)!} {2l\choose l} N\log N(\log R)^{k + 2l} \biggl(\!\frac{k\cdot 2(2l + 1)}{(k \! +\! 2l\! +\! 1)(l\! +\! 1)} \cdot \frac{\vartheta\! -\! \ve}{2} -\! 1 + o(1) \!\biggr)\! >\! 0\notag
\end{align}
if the constants $\vartheta, k, l$ satisfy the crucial inequality
\beq
\frac{k}{k + 2l + 1}\, \frac{2l + 1}{l + 1} \vartheta > 1,
\label{eq:5.9}
\eeq
since $\ve$ can be chosen arbitrarily small after $k$ and $l$ are chosen.

The crucial property of the weights $\Lambda_R(n; \mathcal H, k + l)^2$ to be proved is that it is concentrated so strongly for almost prime $k$-tuples satisfying $P^-(P_{\mathcal H}(n)) > R^\eta$ for any $\eta < c(k,l)$ that the sum of those weights
$\Lambda_R(n; \mathcal H, k + l)^2$ $(n \sim N)$
for which $P^-(P_{\mathcal H}(n)) < R^\eta$, is negligible compared with the total sum for all $n \sim N$ if $N \to \infty$, $\eta \to 0$ $(k, l, \mathcal H$ being fixed).
The same is true for the weighted sum of primes (cf.\ Lemmas~\ref{lem:2} and \ref{lem:5}), although this is not needed to prove Theorems \ref{th:1}--\ref{th:4}.

\section{The execution of Step 1}
\label{sec:6}

The mentioned property of the sieve weights
$\Lambda_R(n; \mathcal H, k + l)$ can be expressed by

\begin{lemma}
\label{lem:3}
Let $N^{c_0} < R \leq \sqrt{N/q}(\log N)^{-C}$, $q \in \mathcal P$, $q = R^\beta$, $\beta < c_0$, where $c_0$ and $C$ are suitably chosen constants depending on $k$ and $l$.
Then we have
\beq
\sum_{\substack{n \sim N\\ q \mid P_{\mathcal H}(n)}} \Lambda_R(n; \mathcal H, k + l)^2 \ll \frac{\beta}{q} \sum_{n \sim N} \Lambda_R(n; \mathcal H, k + l)^2.
\label{eq:6.1}
\eeq
\end{lemma}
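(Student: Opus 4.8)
The plan is to redo the computation behind Lemma~\ref{lem:1}, this time carrying along the constraint $q\mid P_{\mathcal H}(n)$, i.e.\ $n\in\Omega(q)$, and to isolate the precise spot where the saving $\beta/q$ is produced. First I would open the square: since for squarefree moduli $\Omega(d_1)\cap\Omega(d_2)\cap\Omega(q)=\Omega([d_1,d_2,q])$,
\begin{equation*}
\sum_{\substack{n\sim N\\ q\mid P_{\mathcal H}(n)}}\Lambda_R(n;\mathcal H,k+l)^2=\sum_{d_1,d_2\le R}\lambda_R(d_1;k+l)\lambda_R(d_2;k+l)\,\Bigl|\{n\sim N:\ n\in\Omega([d_1,d_2,q])\}\Bigr|.
\end{equation*}
Replacing the inner count by $\frac{|\Omega([d_1,d_2,q])|}{[d_1,d_2,q]}N+O\bigl(|\Omega([d_1,d_2,q])|\bigr)$ and noting $[d_1,d_2,q]\le R^2q\le N(\log N)^{-2C}$, the accumulated arithmetic error is $\ll R^2(\log N)^{O(k+l)}\ll\frac{N}{q}(\log N)^{O(k+l)-2C}$; by Lemma~\ref{lem:1} the right-hand side of \eqref{eq:6.1} is $\asymp\frac{\beta}{q}\mathfrak S(\mathcal H)N(\log R)^{k+2l}$ and $\beta\gg1/\log N$, so for $C=C(k,l)$ large this error is negligible. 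Thus everything reduces to the main term $N\cdot T$ with $T:=\sum_{d_1,d_2\le R}\lambda_R(d_1;k+l)\lambda_R(d_2;k+l)\frac{|\Omega([d_1,d_2,q])|}{[d_1,d_2,q]}$, to be compared with the analogous quantity without the factor $q$, which is $\asymp S_0(\mathcal H)/N$ by Lemma~\ref{lem:1}.

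Next I would pass to Dirichlet series, writing $\frac1{a!}((\log(R/d))_+)^a=\frac1{2\pi i}\int_{(c)}(R/d)^s\,s^{-a-1}\,ds$ for $c>0$, so that $T$ is a double contour integral with integrand $\frac{R^{s_1+s_2}}{s_1^{k+l+1}s_2^{k+l+1}}D_q(s_1,s_2)$, where $D_q=\sum_{d_1,d_2}\frac{\mu(d_1)\mu(d_2)}{d_1^{s_1}d_2^{s_2}}\frac{|\Omega([d_1,d_2,q])|}{[d_1,d_2,q]}$. Since $\frac{|\Omega(m)|}{m}$ is multiplicative and $q$ always divides $[d_1,d_2,q]$, the Euler factor at $q$ separates:
\begin{equation*}
D_q(s_1,s_2)=\frac{\dfrac{|\Omega(q)|}{q}(1-q^{-s_1})(1-q^{-s_2})}{1-\dfrac{|\Omega(q)|}{q}\bigl(q^{-s_1}+q^{-s_2}-q^{-s_1-s_2}\bigr)}\;D(s_1,s_2),
\end{equation*}
with $D(s_1,s_2)=\prod_p\bigl(1-\tfrac{|\Omega(p)|}{p}(p^{-s_1}+p^{-s_2}-p^{-s_1-s_2})\bigr)$ the Dirichlet series from the proof of Lemma~\ref{lem:1}, which factors as $\zeta(1+s_1)^{-k}\zeta(1+s_2)^{-k}\zeta(1+s_1+s_2)^{k}$ times a function holomorphic near the origin with value $\mathfrak S(\mathcal H)$ there.

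The leading factor carries the whole gain. On the contours, which I would take at height $|s_i|\asymp1/\log R$ (truncated suitably), one has $|s_i\log q|\le\beta<c_0$, hence $1-q^{-s_i}=s_i\log q\,(1+O(\beta))$ while the denominator equals $(1-|\Omega(q)|/q)(1+O(\beta))$ and stays bounded away from $0$ (admissibility forces $|\Omega(q)|\le\min(k,q-1)$); so this factor is $C_q(\log q)^2s_1s_2\,(1+O(\beta))$ with $C_q=\frac{|\Omega(q)|}{q-|\Omega(q)|}\ll_k 1/q$ for $q\ge2k$, the finitely many smaller $q$ being even easier (there $NT$ is already $o(1)$ times the claimed bound). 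Inserting this, each of $s_1,s_2$ loses one power in the denominator, so the $T$-integral has exactly the shape of the one evaluating $S_0(\mathcal H)$ but with $l$ replaced by $l-1$; the evaluation used for Lemmas~\ref{lem:1} and \ref{lem:2} then yields $NT\ll C_q(\log q)^2\mathfrak S(\mathcal H)N(\log R)^{k+2l-2}\ll\frac{\beta^2}{q}\mathfrak S(\mathcal H)N(\log R)^{k+2l}\asymp\frac{\beta^2}{q}S_0(\mathcal H)$. Since $\beta<c_0<1$, combining this with the error bound above gives \eqref{eq:6.1}, in fact with the stronger saving $\beta^2$.

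The main obstacle I anticipate is uniformity in $q$ over the whole range $2\le q\le R^{c_0}$: the $q$-dependent holomorphic perturbation appearing above has Taylor coefficients of size $O(\log q)$, so controlling it — together with the tails of the truncated double integral and the arithmetic error — genuinely requires the constraint $|s_i\log q|<c_0$ on the contours and the hypothesis $R\le\sqrt{N/q}(\log N)^{-C}$, the latter being exactly what keeps the error negligible in the worst case $q\asymp R^{c_0}$, where the main term itself is only of size $Nq^{-1}(\log R)^{O(1)}$. Everything else is the standard residue bookkeeping already carried out in the proofs of Lemmas~\ref{lem:1} and \ref{lem:2}.
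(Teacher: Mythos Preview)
Your extraction of the Euler factor at $q$ and the Taylor expansion $(1-q^{-s_1})(1-q^{-s_2})=s_1s_2(\log q)^2(1+O(s_i\log q))$ near the origin is correct, and it does reduce the double residue at $(0,0)$ to the $l-1$ problem, yielding the sharper saving $\beta^2/q$ rather than the paper's $\beta/q$. So for the principal term your route is both different from and slightly stronger than the paper's.

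The paper does \emph{not} Taylor-expand. Instead it writes
\[
(1-q^{-s_1})(1-q^{-s_2})R^{s_1+s_2}=\sum_{\varepsilon_1,\varepsilon_2\in\{0,1\}}(-1)^{\varepsilon_1+\varepsilon_2}\,R_1^{s_1}R_2^{s_2},\qquad R_i\in\{R,R/q\},
\]
and treats each of the four pieces as a copy of the Lemma~\ref{lem:1} integral with parameters $R_1,R_2$ in place of $R,R$. The main residues are computed via the substitution $s_2=s\xi$, giving $\mathcal T_{q,1}(1+\alpha)=\binom{2l}{l}+O(\alpha)$ with $\alpha\asymp\beta$; the $\binom{2l}{l}$ parts cancel in the signed sum and only $O(\beta)$ survives. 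This is less efficient than your argument (one power of $\beta$ is thrown away), but it buys something important: each of the four integrands has the pure exponential $R_1^{s_1}R_2^{s_2}$ in the numerator, so the \emph{secondary} residue at $s_1=-s_2$ and the shifted far contours can be bounded exactly as in \cite{GMPY}, the only new observation being that for $R_2\le R_1$ and $\Re s_2>0$ one has $|R_1^{s_1}R_2^{s_2}|\le |R_1^{s_1+s_2}|$, whence the analogues of \eqref{eq:6.21} and \eqref{eq:6.23} hold.

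This is precisely where your sketch has a gap. You cannot keep ``$|s_i\log q|<c_0$ on the contours'': after the standard shift, the secondary residue is picked up on $s_1\in C(s_2)$, $s_2\in L_2$ with $\Re s_2=c_2/(2\log U)$, $U=\exp\sqrt{\log N}$, and there $\Re s_1\approx -c_2/(2\log U)$, so
\[
|1-q^{-s_1}|\le 2\,q^{c_2/(2\log U)}\ \ll\ \exp\bigl(c\,\beta\sqrt{\log N}\bigr),
\]
which for $q$ near $R^{c_0}$ is $\exp(c'\sqrt{\log N})$ and dominates any power of $\log N$. Hence the statement ``everything else is the standard residue bookkeeping'' is not justified as written: one really needs the four-term decomposition (or an equivalent device that absorbs $q^{-s_i}$ into the exponential $R^{s_i}$) to keep the secondary and far-contour contributions under control. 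Once you do that for the error terms, your $\beta^2$ main-term bound survives and improves on the paper's stated $\beta$.
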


This immediately implies

\begin{lemma}
\label{lem:4}
Let $N^{c_0} < R \leq N^{1/(2 + \eta)} (\log N)^{-C}$, $\eta > 0$.
Then we have
\beq
\sum_{\substack{n \sim N\\
(\mathcal P_{\mathcal H}(n), P(R^\eta)) > 1}} \Lambda_R(n; \mathcal H, k + l)^2 \ll \eta \sum_{n \sim N} \Lambda_R(n; \mathcal H, k + l)^2.
\label{eq:6.2}
\eeq
\end{lemma}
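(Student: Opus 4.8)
The plan is to deduce Lemma~\ref{lem:4} from Lemma~\ref{lem:3} by a dyadic decomposition over the possible small prime factors of $P_{\mathcal H}(n)$. First I would observe that if $(P_{\mathcal H}(n), P(R^\eta)) > 1$, then there is at least one prime $q \leq R^\eta$ with $q \mid P_{\mathcal H}(n)$, so the left-hand side of \eqref{eq:6.2} is bounded by
\beq
\sum_{q \in \mathcal P,\ q \leq R^\eta} \ \sum_{\substack{n \sim N\\ q \mid P_{\mathcal H}(n)}} \Lambda_R(n; \mathcal H, k + l)^2 . \notag
\eeq
For each such $q$ we have $q = R^\beta$ with $\beta = \log q/\log R \leq \eta$; since $\eta$ will be taken small (in particular $\eta < c_0$) the hypothesis $\beta < c_0$ of Lemma~\ref{lem:3} holds, and the condition $R \leq N^{1/(2+\eta)}(\log N)^{-C}$ is exactly what guarantees $R \leq \sqrt{N/q}(\log N)^{-C}$ uniformly for $q \leq R^\eta$ (because $R^{2}q \leq R^{2+\eta} \leq N(\log N)^{-C(2+\eta)}$), so Lemma~\ref{lem:3} applies to every term.

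Next I would apply Lemma~\ref{lem:3} to each inner sum, obtaining the bound
\beq
\ll \left( \sum_{q \in \mathcal P,\ q \leq R^\eta} \frac{\beta}{q} \right) \sum_{n \sim N} \Lambda_R(n; \mathcal H, k + l)^2 , \qquad \beta = \frac{\log q}{\log R} . \notag
\eeq
The remaining task is purely the estimation of the arithmetic sum $\sum_{q \leq R^\eta} \frac{\log q}{q \log R}$. By Mertens' theorem $\sum_{q \leq x} \frac{\log q}{q} = \log x + O(1)$, so with $x = R^\eta$ this sum is $\frac{1}{\log R}\bigl(\eta \log R + O(1)\bigr) = \eta + O(1/\log R) \ll \eta$, using $R \geq N^{c_0}$ so that $1/\log R \to 0$ (and, if desired, absorbing the $O(1/\log R)$ into a slightly larger $\eta$). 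This yields \eqref{eq:6.2}.

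I do not expect any genuine obstacle here: the lemma is labelled as an immediate consequence of Lemma~\ref{lem:3}, and indeed the only real content is (a) checking that the range hypothesis $R \leq N^{1/(2+\eta)}(\log N)^{-C}$ feeds correctly into the hypothesis $R \leq \sqrt{N/q}(\log N)^{-C}$ of Lemma~\ref{lem:3} for all primes $q \leq R^\eta$ — this is where one must be slightly careful about how the constant $C$ is chosen relative to $\eta$ and $c_0$ — and (b) the elementary Mertens estimate that converts $\sum_{q \leq R^\eta}\frac{\log q}{q}$ into a quantity $\ll \eta \log R$. The mildest subtlety is that the implied constant in Lemma~\ref{lem:3} must be uniform in $\beta$ over $\beta \leq \eta$; this is already built into the statement of Lemma~\ref{lem:3} (the constant there depends only on $k$ and $l$), so summing over $q$ loses nothing.
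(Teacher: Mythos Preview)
Your proposal is correct and matches the paper's approach exactly: the paper likewise deduces Lemma~\ref{lem:4} by summing the bound of Lemma~\ref{lem:3} over all primes $q \leq R^\eta$ and invoking the Mertens-type estimate $\sum_{q\leq R^\eta} \frac{|\Omega(q)|}{q}\cdot\frac{\log q}{\log R} \ll \eta$ (your version without the factor $|\Omega(q)|\leq k$ is equivalent, that factor being already absorbed into the implied constant of Lemma~\ref{lem:3}). The phrase ``dyadic decomposition'' in your first sentence is a slight misnomer---you never actually dyadically split, nor do you need to---but the argument you give is the right one.
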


\noindent
{\bf Remark 1.}
In some cases we need to use an analogue of \eqref{eq:6.2} with the product of the weights $\Lambda_R(n; \mathcal H, k + l_1)$ and $\Lambda_R(n; \mathcal H, k + l_2)$ in place of $\Lambda_R(n; \mathcal H, k + l)^2$ with different values of $l_1$ and $l_2$.
However, a simple use of Cauchy's inequality reduces the estimates of these quantities to \eqref{eq:6.2}.
The same applies to Lemma~\ref{lem:6}, which is a simple consequence of Lemma~\ref{lem:4}.

\medskip

If the above sum is twisted by primes we can only prove an analogue of Lemma~\ref{lem:4}.

\begin{lemma}
\label{lem:5}
Let $N^{c_0} \leq R \leq N^{(\vartheta - \ve)/(2 + \eta)} (\log N)^{-C}$, $0 < \eta < c_0$, $\ve > 0$.
Let $h \leq H$, and $m = 1$ if $h \in \mathcal H$, $m = 0$ if $h \notin \mathcal H$. Then
\begin{align}
\label{eq:6.3}
&\sum_{\substack{n \sim N\\
(\mathcal P_{\mathcal H}(n), P(R^\eta)) > 1}} \theta(n + h) \Lambda_R(n; \mathcal H, k + l)^2 \\
&\ll \eta \sum_{n \sim N} \theta(n + h) \Lambda_R(n; \mathcal H, k + l)^2
\notag\\
&\quad + O\left(N\left((\log_2 N)^c\left((\log N)^{k + 2l + m - 1} + (\log N)^{k + l - \frac12}\right)\right)\right).\notag
\end{align}
\end{lemma}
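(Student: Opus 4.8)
The plan is to mimic the proof of Lemma~\ref{lem:4} but carry the extra weight $\theta(n+h)$ through the argument, reducing everything to Lemma~\ref{lem:3} twisted by primes, i.e.\ to a prime-detecting analogue of \eqref{eq:6.1}. First I would decompose the left-hand side over the prime $q = P^-(P_{\mathcal H}(n)) \le R^\eta$ dividing $P_{\mathcal H}(n)$, writing
\[
\sum_{\substack{n\sim N\\(P_{\mathcal H}(n),P(R^\eta))>1}} \theta(n+h)\,\Lambda_R(n;\mathcal H,k+l)^2 \le \sum_{q\le R^\eta}\ \sum_{\substack{n\sim N\\ q\mid P_{\mathcal H}(n)}}\theta(n+h)\,\Lambda_R(n;\mathcal H,k+l)^2 ,
\]
so that it suffices to bound the inner sum for each prime $q=R^\beta$ with $\beta\le\eta<c_0$. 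The main step is then to establish, under the hypothesis that $\vartheta$ is a level of distribution and $R\le N^{(\vartheta-\ve)/(2+\eta)}(\log N)^{-C}$, an estimate of the shape
\[
\sum_{\substack{n\sim N\\ q\mid P_{\mathcal H}(n)}}\theta(n+h)\,\Lambda_R(n;\mathcal H,k+l)^2 \ll \frac{\beta}{q}\,S_1(\mathcal H) + (\text{error}),
\]
with $S_1(\mathcal H)$ the quantity of Lemma~\ref{lem:2}, the error being of the size claimed in \eqref{eq:6.3}. Summing over $q\le R^\eta$, using $\sum_{q\le R^\eta}\beta/q = \sum_{q\le R^\eta}\frac{\log q}{\log R\cdot q}\ll \eta$ by Mertens, and invoking Lemma~\ref{lem:2} to rewrite $S_1(\mathcal H)$ as $\eta$ times the right-hand side of \eqref{eq:6.3}, yields the lemma.

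The arithmetic core is the evaluation of that twisted prime sum. Expanding the square $\Lambda_R(n;\mathcal H,k+l)^2=\sum_{d_1,d_2}\lambda_R(d_1;k+l)\lambda_R(d_2;k+l)$ with $n\in\Omega(d_1)\cap\Omega(d_2)$, and appending the congruence $q\mid P_{\mathcal H}(n)$, one is led to count primes $n+h\le 3N$ in residue classes to moduli $[d_1,d_2]q$ (roughly; one must be careful since $q$ may already divide $d_1$ or $d_2$, and since $q\mid P_{\mathcal H}(n)$ forces $n$ into one of $|\Omega(q)|\le k$ residue classes mod $q$). The key point is that the composite modulus $[d_1,d_2]q$ has size at most $R^2 R^\eta = R^{2+\eta}\le N^{\vartheta-\ve}(\log N)^{-C'}$, so the Bombieri--Vinogradov-type hypothesis \eqref{eq:1.1} applies and replaces $\sum_{p\equiv a}\log p$ by the main term $3N/\varphi([d_1,d_2]q)$ with total error $\ll N(\log N)^{-A}$. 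One then performs the standard Goldston--Pintz--Y{\i}ld{\i}r{\i}m contour-integral (or Selberg-diagonalization) evaluation of the resulting main term; the presence of the extra prime $q$ multiplies the main term by the local factor $|\Omega(q)|/\varphi(q)\ll k/q$ together with the combinatorial loss $\beta = \log q/\log R$ coming from the restriction of the $d_i$-summation to multiples involving $q$ — exactly as in Lemma~\ref{lem:3}. This reproduces the $\beta/q$ gain. The two error terms in \eqref{eq:6.3}, namely $N(\log_2 N)^c(\log N)^{k+2l+m-1}$ and $N(\log_2 N)^c(\log N)^{k+l-\frac12}$, arise respectively from the secondary terms in the main-term evaluation (as in the $O$-term of Lemma~\ref{lem:2}) and from the diagonal/Bombieri--Vinogradov error being spread over $q$; the $(\log N)^{k+l-\frac12}$ shape is the signature of the prime-number-theorem error entering only through the $n+h$ variable while the $k+l$ sieve weights on the other components are estimated trivially in $L^\infty$.

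The main obstacle is the bookkeeping for the interaction between the auxiliary prime $q$ and the sieve divisors $d_1,d_2$: unlike in the non-twisted Lemma~\ref{lem:4}, here one cannot simply factor out $q$, because $q$ might coincide with a prime factor of $d_1d_2$, and because the twist $\theta(n+h)$ already singles out the component $n+h$ whereas $q\mid P_{\mathcal H}(n)$ may put $q$ into a \emph{different} component $n+h_i$. Handling this cleanly requires splitting the sum according to whether $q\in\mathcal H$-type coincidences occur, i.e.\ according to the residue class structure of $\Omega(q)$ relative to $h$, and tracking how the singular series $\mathfrak S(\mathcal H\cup\{h\})$ is perturbed by removing the Euler factor at $q$. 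This is why we can only prove the \emph{analogue} of Lemma~\ref{lem:4} rather than the clean version: the prime-twisted sum does not admit a bound purely proportional to its own total without an additive error, since the Bombieri--Vinogradov input is itself only an on-average statement and its error cannot be localized to the sub-sum over $n$ with $P^-(P_{\mathcal H}(n))$ small. Everything else — the contour integral, the Mertens summation over $q$, the reduction via Cauchy--Schwarz alluded to in Remark~1 for unequal parameters $l_1,l_2$ — is routine once Lemma~\ref{lem:3} and its prime-twisted counterpart are in hand.
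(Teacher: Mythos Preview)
Your plan is essentially the paper's own approach: decompose over the small prime $q\le R^\eta$ dividing $P_{\mathcal H}(n)$, apply the level-of-distribution hypothesis \eqref{eq:1.1} to moduli $[q,d_1,d_2]\le R^{2+\eta}\le N^{\vartheta-\ve}$, run the GPY contour integral with the extra Euler factor at $q$ (replacing $|\Omega(p)|/p$ by $(|\Omega^+(p)|-1)/(p-1)$ as in Lemma~2 of \cite{GMPY}), extract the gain $\beta/q$ from the main residue exactly as in Lemma~\ref{lem:3}, and sum over $q$ via Mertens. The paper also distinguishes the cases $h\in\mathcal H$ and $h\notin\mathcal H$ (the latter allowing $|\Omega^+(p)|=p$ for small $p$, which kills the singular series), a point you touch on only implicitly.

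One correction to your narrative: the error $N(\log_2 N)^c(\log N)^{k+l-1/2}$ does \emph{not} come from the Bombieri--Vinogradov remainder or from any $L^\infty$ bound on the sieve weights. The level-of-distribution error in \eqref{eq:1.1} is $\ll N(\log N)^{-A}$ for arbitrary $A$ and is entirely negligible after summing over $q$. The $(\log N)^{k+l-1/2}$ term is the secondary residue $I$ at $s_1=-s_2$ in the contour integral (see \eqref{eq:6.22}--\eqref{eq:6.24}), inherited verbatim from \cite{GMPY}; it appears already in the untwisted Lemma~\ref{lem:3} and persists here because the same contour shift is performed. Your proposed mechanism would give the wrong exponent. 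This does not break your strategy---if you actually execute the ``standard GPY contour-integral evaluation'' you invoke, the correct error emerges automatically---but the explanation you give for its provenance is incorrect.
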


\noindent
{\bf Remark 2.}
We will not investigate in the present work the dependence of the sign $\ll$ on $k$ and $l$, although it has some significance for some applications.

\smallskip
\noindent
{\bf Remark 3.}
In the present applications Lemma~\ref{lem:5} will not be used.
However, it has a significance in other applications and also here if the dependence on $k$ and $l$ is also considered.
At any rate the proof of Lemmas \ref{lem:3}--\ref{lem:5} will be very similar.

\begin{proof}[Proof of Lemma~\ref{lem:3}]
We will follow the proof of \cite{GMPY} and just point out the differences.
In evaluating the expression (1.4) in \cite{GMPY} we have to take into consideration the extra condition $q \mid \mathcal P_{\mathcal H}(n) \Leftrightarrow n \in \Omega(q)$ which has to be added to the conditions $n \in \Omega(d_1)$, $n \in \Omega(d_2)$.
Therefore the critical quantity $\mathcal T$ in the main term $N \mathcal T$ will take now the form $N\mathcal T'_q$
\beq
\mathcal T'_q = \sum_{d_1, d_2} \frac{|\Omega([d_1, d_2, q])|}{[d_1, d_2, q]} \lambda_R(d_1; k + l) \lambda_R(d_2; k + l).
\label{eq:6.5}
\eeq
Due to the multiplicative property of $\Omega$ this will mean that the main term will be now
\beq
\frac{N |\Omega(q)|}{q} \widetilde{\mathcal T}_q \quad \text{ with } \quad  \mathcal T'_q = \frac{|\Omega(q)|}{q} \widetilde{\mathcal T}_q,
\label{eq:6.6}
\eeq
where $\widetilde{\mathcal T}_q$ can be expressed similarly to \cite{GMPY} by the new generating function $\wt{F}_q = F^\#_q \cdot F_q$, where $\bold  s = (s_1, s_2)$ and $F_q(\bold  s)$ is up to the missing term for $p = q$ the same as $F$ in \cite{GMPY}:
\beq
\aligned
F^\#_q(\bold  s) :&= \left(1 - \frac1{q^{s_1}}\right) \left(1 - \frac1{q^{s_2}}\right), \\
F_q(\bold  s) :&= \prod_{p \neq q} \left(1 - \frac{|\Omega(p)|}{p} \left(\frac1{p^{s_1}} + \frac1{p^{s_2}} - \frac1{p^{s_1 + s_2}} \right)\right).
\endaligned
\label{eq:6.7}
\eeq

Analogously to (1.5) of \cite{GMPY} we can define now
\beq
\wt G_q(\bold  s) := \wt F_q(\bold  s) \left(\frac{\zeta(s_1 + 1) \zeta(s_2 + 1)}{\zeta(s_1 + s_2 + 1)}\right)^k := F^\#_q(\bold  s) G_q(\bold  s).
\label{eq:6.8}
\eeq

The appearance of the term $F^\#_q(\bold  s)$ in $\wt F_q(\bold  s)$ causes additional difficulties in evaluating the expression $\wt{\mathcal T}_q$ compared to that of the analogous quantity $\mathcal T$ of \cite{GMPY}.
We have, namely
\begin{align}
\label{eq:6.9}
\wt{\mathcal T}_q :&= \frac1{(2\pi i)^2} \intl_{(1)} \intl_{(1)} G_q(s_1, s_2) \left( \frac{\zeta(s_1 + s_2 + 1)}{\zeta(s_1 + 1) \zeta(s_2 + 1)} \right)^k \times \\
&\quad \times \frac{R^{s_1 + s_2} - (R/q)^{s_1} R^{s_2} - R^{s_1} (R/q)^{s_2} + (R/q)^{s_1 + s_2}}{(s_1 s_2)^{k + l + 1}} ds_1 ds_2,
\notag
\end{align}
where $\int_{(\beta)}$ means integration over the vertical line $\Re z = \beta$.

This means that due to the numerator of the last term above we need to evaluate (or at least estimate) integrals of the form
\beq
\mathcal T_q(R_1, R_2) := \frac1{(2\pi i)^2} \intl_{(1)} \intl_{(1)} Z_q(s_1, s_2) \frac{R^{s_1}_1 R^{s_2}_2}{(s_1 + s_2)^k(s_1 s_2)^{l + 1}} ds_1 ds_2,
\label{eq:6.10}
\eeq
where analogously to \cite{GMPY} we define
\beq
Z_q(\bold  s) = G_q(\bold  s) \left( \frac{(s_1 + s_2) \zeta(s_1 + s_2 + 1)}{\bigl(s_1 \zeta(s_1 + 1) s_2 \zeta(s_2 + 1)\bigr)}\right)^k;
\label{eq:6.11}
\eeq
however, in contrast to \cite{GMPY}, where $R_1 = R_2 = R$ we have here now, with $R_0 = R$ or $R/q$,
\beq
R_1 := R^a_0, \quad R_2 := R_0, \quad a:= 1 + \alpha, \quad
-2c_0 < \alpha < 2c_0.
\label{eq:6.12}
\eeq

First we will consider the changes in the contribution of the main term corresponding to the residue $s_1 = s_2 = 0$.
This term is now, by an argument similar to \cite{GMPY},
\begin{align}
\label{eq:6.13}
\mathcal T_{q,0}(R_1, R_2) = \underset{s_2 = 0}{\Res}\ \underset{s_1 = 0}{\Res}  &=
\frac1{(2\pi i)^2} \intl_{C_2} \intl_{C_1} \frac{Z_q(s_1, s_2)R^{as_1 + s_2}_0}{(s_1 + s_2)^k (s_1 s_2)^{l + 1}} ds_1 ds_2 \\
&= \frac1{(2\pi i)^2} \intl_{C_3} \intl_{C_1} \frac{Z_q(s, s\zeta)R^{s(a + \xi)}_0}{(\xi + 1)^k \xi^{l + 1} s^{k + 2l + 1}} ds d\xi
\notag
\end{align}
where we wrote $s_1 = s$, $s_2 = s\xi$, and $C_1, C_2, C_3$ are circles with $|s_1| = |s| = \varrho$,
$|s_2| = \varrho/2$, $|\xi| = 1/2$, respectively, with a small $\varrho > 0$.
Using the analogue of (1.6) of \cite{GMPY} for $G_q(\bold  s)$ we can write $\mathcal T_{q,0}$ as
\beq
\mathcal T_{q,0} (R_1, R_2) = \frac{Z_q(0,0)}{(2\pi i)(k + 2l)!}(\log R_0)^{k + 2l} \mathcal T_{q,1} (a) + O \bigl((\log N)^{k + 2l - 1} (\log_2 N)^c\bigr),
\label{eq:6.14}
\eeq
since $\frac{\partial^j}{\partial s^j} Z_q(0,0) \ll (\log_2 N)^c$ if $j \leq C(k,l)$.
The main term is
\begin{align}
\label{eq:6.15}
\mathcal T_{q,1}(a) :&= \frac1{2\pi i} \intl_{C_3} \frac{(a + \xi)^{k + 2l}}{(\xi + 1)^k \xi^{l + 1}} d\xi \\
&= \frac1{l!} \left[ \left( \frac{d}{d\xi}\right)^l \left\{\left(1 + \frac{\alpha}{1 + \xi} \right)^k (1 + \alpha + \xi)^{2l} \right\}\right]_{\xi = 0} \notag \\
&= {2l\choose l} (1 + \alpha)^{k + l} + \sum^l_{j = 1} \mathcal T_j(\alpha), \notag
\end{align}
where
\beq
\mathcal T_j(\alpha) := {l\choose j}
\frac{(2l)!}{l! (l + j)!} (1 + \alpha)^{l + j} \left[\left(\frac{d}{d\xi}\right)^j \left(1 + \frac{\alpha}{1 + \xi}\right)^k \right]_{\xi = 0}.
\label{eq:6.16}
\eeq
We remark here that the simpler case $R_1 = R_2 \Leftrightarrow a = 1$ yielded immediately $\mathcal T_{q,1}(1) = {2l \choose l}$ in \cite{GMPY}.
Due to $j \geq 1$ the $j$\/{th} derivative has the form
\beq
\left(\frac{d}{d\xi}\right)^{j - 1} \left\{ k\alpha \left(1 + \frac{\alpha}{1 + \xi}\right)^{k - 1} \frac{(-1)}{(1 + \xi)^2} \right\},
\label{eq:6.17}
\eeq
so, after taking again derivatives $j - 1$ $(\geq 0)$ times we obtain
\beq
\mathcal T_j(\alpha) \ll \alpha \ \text{ for } \ j = 1,2, \dots, l
\label{eq:6.17masodik}
\eeq
which, finally, by \eqref{eq:6.15}--\eqref{eq:6.16}, yields
\beq
\mathcal T_{q,1}(a) = \mathcal T_{q,1}(1 + \alpha) = {2l\choose l} + O(\alpha).
\label{eq:6.18}
\eeq

Hence an easy calculation shows that the contribution of the residue at $s_1 = s_2 = 0$ in the value of $\wt{\mathcal T}_q$ in \eqref{eq:6.9} is
\beq
\ll \alpha \frac{G_q(0,0)}{(k + 2l)!} (\log R)^{k + 2l} {2l\choose l} + O\bigl((\log R)^{k + 2l - 1} \log^c_2 N\bigr),
\label{eq:6.19}
\eeq
where $\alpha = -\log q/\log R$ or $\log q/\log (R/q)$, and
\beq
G_q(0,0) = \mathfrak S(\mathcal H) \left(1 - \frac{|\Omega(q)|}{q} \right)^{-1} \ll \mathfrak S(\mathcal H),
\label{eq:6.20}
\eeq
since $|\Omega(q)| \leq \min (q - 1, k)$ because $\mathcal H$ is admissible.

We need also a more careful treatment in the error term estimation, since the earlier relation $R^{s_1 + s_2} \ll 1$ for $|s_1 + s_2| = (\log N)^{-1}$ (cf.\ (1.7)--(1.9) of \cite{GMPY}) is no longer true if $R^{s_1 + s_2}$ is replaced by $R^{s_1}_1 R^{s_2}_2$ with $R_1 \neq R_2$ ($R_1 = R$, $R_2 = R/q$ or reversed).

However, due to the symmetry of $s_1$ and $s_2$ in \eqref{eq:6.10} we may suppose during the estimation of \eqref{eq:6.10} that $R_2 \leq R_1$.
In this case (1.7) of \cite{GMPY} remains valid since in the shifted integral for $s_1 \in L_3$, $s_2 \in L_2$ ($L_2$ and $L_3$ are defined as in \cite{GMPY} by $L_2 : c_2 / (2\log U) + it$, $|t| \leq U/2$, $L_3 : -c_2/\log U + it$, $|t| \leq U$ with $U = \exp (\sqrt{\log N})$) we have, similarly to \cite{GMPY}, but now by $\Re\, s_2 > 0$
\beq
\bigl|{R_1}^{s_1} {R_2}^{s_2}\bigr| \leq \bigl|{R_1}^{s_1 + s_2}\bigr| \leq {R_1}^{-c_2/(2\log U)} \ll \exp \bigl(-c\sqrt{\log N}\bigr).
\label{eq:6.21}
\eeq

Further, in the secondary term, writing $C(s_2) : |s_1 + s_2| = (\log N)^{-1}$ we obtain again, with the same notation as in \cite{GMPY},
\begin{align}
I :&= \frac1{2\pi i} \intl_{L_2} \Bigl\{\Res_{s_1 = -s_2} \Bigr\} ds_2
\label{eq:6.22}\\
&= \frac1{2\pi i} \intl_{L_2} \intl_{C(s_2)} G_q(s_1, s_2) \left(\frac{\zeta(s_1 + s_2 + 1)}{\zeta(s_1 + 1) \zeta(s_2 + 1)} \right)^k \frac{R^{s_1}_1 R^{s_2}_2}{(s_1 s_2)^{k + l + 1}} ds_1 ds_2, \notag
\end{align}
and the crucial quantity $R^{s_1}_1 R^{s_2}_2$ is for $s_1 \in C(s_2)$, $s_2 \in L_2$
\beq
\bigl| R^{s_1}_1 R^{s_2}_2 \bigr| \leq \bigl|R^{s_1 + s_2}_1 \bigr| \leq e^{\log R_1 / \log N} < e,
\label{eq:6.23}
\eeq
since $R_2 \leq R_1$, $\text{\rm Re}\, s_2 = c_2 / (2\log U) > 0$, and everything else remains unchanged valid; yielding the same estimate as in (1.9) of \cite{GMPY}:
\beq
I \ll (\log N)^{k + l - 1/2} (\log_2 N)^c.
\label{eq:6.24}
\eeq

During the proof we used that the only difference between the present $G_q(\bold  s)$ and $G(\bold  s)$ of \cite{GMPY} (and similarly with $F$ and $Z$) is that (due to $|\Omega(q)| \leq \min(k, q - 1)$)
\beq
\frac{Z_q(\bold  s)}{Z(\bold  s)} = \frac{G_q(\bold  s)}{G(\bold s)} = \frac{F_q(\bold  s)}{F(\bold s)} = \left(1 - \frac{|\Omega(q)|}{q} \left( \frac1{q^{s_1}} + \frac1{q^{s_2}} - \frac1{q^{s_1 + s_2}}\right)\right) \ll 1
\label{eq:6.25}
\eeq
in both regions $\Re s_1, \Re s_2 \in (-c_3, c_3)$ and $\Re s_1, \Re s_2 \geq 0$, say, if $c_3 < c_0(k, l)$ is chosen sufficiently small.

Finally the last integral, $s_1 \in L_3$, $s_2 \in L_4 : - c_2/\log U + it$, $|t| \leq U/2$ remains to be $O\bigl(\exp(-c\sqrt{\log N})\bigr)$ as in \cite{GMPY}, which finishes the proof of Lemma~\ref{lem:3}.
Summation over all primes $q \leq R^\eta$ gives an upper estimate for the LHS of \eqref{eq:6.2} and yields Lemma~\ref{lem:4}, in view of \eqref{eq:6.5}, \eqref{eq:6.6}, \eqref{eq:6.19}--\eqref{eq:6.21}, \eqref{eq:6.24}, \eqref{eq:6.25}, since
\beq
\sum_{q\leq R^\eta} \frac{|\Omega(q)|}{q} \frac{\log q}{\log R} \ll \eta.
\label{eq:6.26}
\eeq
\end{proof}

The proof of Lemma~\ref{lem:5} runs again similarly to that of the analogous Lemma~2 of \cite{GMPY}.
The needed changes are essentially the same as described above, so we will be brief.
First we remark that during the applications of the Bombieri--Vinogradov theorem or its hypothetical improvement \eqref{eq:1.1} the existence of a prime $q \leq R^\eta$ with the extra condition $q \mid \mathcal P_{\mathcal H}(n)$ means that we need now the stronger condition
\beq
[q, d_1, d_2] \leq R^{2 + \eta} \leq N^{\vartheta - \ve},
\label{eq:6.27}
\eeq
which, however, appears in the statement of our Lemma~\ref{lem:5}.

With this change in our assumption the substitution of the contribution of the primes by its expected contribution, the analogues of formulae \eqref{eq:2.4}--\eqref{eq:2.6} and the displayed inequality following (2.6) in \cite{GMPY} remain valid with the change that $[d_1, d_2]$ has to be replaced always by $[q, d_1, d_2]$.
After this substitution, we arrive again at the analogous quantity
\beq
\prod_{p\mid [q, d_1, d_2]} \biggl(\sum_{b\in \Omega(p)} \delta  ((b + h, p))\biggr) = \bigl(|\Omega^+(q)| - 1\bigr) \prod_{\substack{p \mid [d_1, d_2]\\ p \neq q}} \bigl(|\Omega^+(p)| - 1\bigr),
\label{eq:6.28}
\eeq
where $\Omega^+$ corresponds to the set $\mathcal H^+ = \mathcal H \cup \{h\}$ (and as remarked in \cite{GMPY}, $|\Omega^+(p)| = p$ can occur now already) and $\delta(m) = 1$ if $m = 1$, $\delta(m) = 0$ if $m \neq 1$.

This yields now, similarly to \eqref{eq:6.5}--\eqref{eq:6.6} to the slightly modified analogue of $\mathcal T^*$ in (2.7) of \cite{GMPY}, to the expressions
\beq
\frac{N\bigl(|\Omega^+(q)| - 1\bigr)}{q - 1} \mathcal T^*_q
\label{eq:6.29}
\eeq
with
\beq
\mathcal T^*_q = \frac1{(2\pi i)^2} \intl_{(1)} \intl_{(1)} \wt F^*_q (s_1, s_2) \frac{R^{s_1 + s_2}}{(s_1 s_2)^{k + l + 1}} ds_1 ds_2,
\label{eq:6.30}
\eeq
where, with the same $F^\#_q(\bold  s) = (1 - q^{-s_1})(1 - q^{-s_2})$ as in \eqref{eq:6.7} we have now
\beq
\wt{\mathcal F}^*_q = F^\#_q \cdot F^*_q, \quad
F^*_q(\bold  s) = \prod_{p \neq q} \left(1 - \frac{|\Omega^+(p)| - 1}{p - 1} \left( \frac1{p^{s_1}} + \frac1{p^{s_2}} - \frac1{p^{s_1 + s_2}}\right)\right).
\label{eq:6.31}
\eeq

The whole treatment of the error terms is the same as in case of Lemma~\ref{lem:3}, the only change being in the main term and in the singular series.
We have to distinguish two cases (although as mentioned earlier Case~2 is not needed for the present work).

\smallskip
\emph{Case 1}. $h \in \mathcal H$.
In this case $\mathcal H^+ = \mathcal H \cup \{ h\} = \mathcal H$, $m = 1$, $\Omega^+(d) = \Omega(d)$ for every $d$, the singular series is according to \cite{GMPY} \eqref{eq:6.25} and \eqref{eq:6.31} now
\beq
G^*_q(0,0) = \mathfrak S_q(\mathcal H^+) = \prod_{p\neq q} \left(1 - \frac{|\Omega^+(p)|}{p}\right) \left(1 - \frac1p\right)^{-(k + 1)} \ll \mathfrak S(\mathcal H^+) = \mathfrak S(\mathcal H)
\label{eq:6.32}
\eeq
and the same reasoning as in \cite{GMPY}, the translation $k \to k - 1$, $l \to l + 1$ gives the result, since, if $n + h \in \mathcal P$, then
\beq
d \mid P_{\mathcal H}(n) \Longleftrightarrow d \mid P_{\mathcal H \setminus\{ h\}}(n) .
\label{eq:6.33}
\eeq

\smallskip
\emph{Case 2}. $h \notin \mathcal H$.
In this case $\mathcal H^+ = \mathcal H \cup \{h\}$, $m = 0$, $\Omega^+(p) = k + 1$ for $p > k$ and $\mathcal H^+$ is not necessarily admissible.
However, $\Omega^+(p) = p$ may occur only for $p \leq k + 1$ since $\Omega^+(p) \leq k + 1$.
If $\Omega^+(p) = p$ is the case for some $p \neq q$, then as remarked in \cite{GMPY} the corresponding Euler product vanishes at $s_1 = 0$ or $s_2 = 0$, the main term lacks $\bigl(G^*_q(0,0) = \mathfrak S_q(\mathcal H^+) = 0\bigr)$ and the error term is the same or actually smaller.
Finally if the only prime for which $\Omega^+(p) = p$ holds is $p = q \leq k + 1$, then by $\mathfrak S$ $(\mathcal H \cup \{h\}) = 0$ we use the trivial consequence of Lemma~\ref{lem:2}:
\begin{align}
\label{eq:6.34}
\sum_{\substack{n \sim N\\
(P_{\mathcal H}(n), P(R^\eta)) > 1}} \theta(n + h) \Lambda_R(n; \mathcal H, k + l)^2
&\leq \sum_{n \sim N} \theta(n + h) \Lambda_R(n; \mathcal H, k + l)^2\\
&\ll N(\log R)^{k + 2l - 1} (\log_2 N)^c.\notag
\end{align}

\smallskip
\noindent
{\bf Remark.}
In most applications we can replace Lemma~\ref{lem:5} with the following slightly weaker assertion, which is a trivial consequence of Lemma~\ref{lem:4}.
\begin{lemma}
\label{lem:6}
Let $N^{c_0} < R \leq N^{1/2}(\log N)^{-C}$.
Then we have for any $h \leq H$
\beq
\sum_{\substack{n \sim N\\
(P_{\mathcal H}(n), P(R^\eta)) > 1}} \theta(n + h) \Lambda_R(n; \mathcal H, k + l)^2 \ll \eta \log N \sum_{n \sim N} \Lambda_R(n; \mathcal H, k + l)^2.
\label{eq:6.35}
\eeq
\end{lemma}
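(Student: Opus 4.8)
The plan is to observe that the prime twist $\theta(n+h)$ costs at most a single logarithm on the range $n\sim N$, so that \eqref{eq:6.35} is forced by the untwisted estimate \eqref{eq:6.2} of Lemma~\ref{lem:4}. First I would note that for every $n\sim N$ and every $h\le H$ we have, by \eqref{eq:5.1}, $n+h\le 2N+H\le 3N$; since $\theta(n+h)=0$ unless $n+h$ is prime, and then $\theta(n+h)=\log(n+h)\le\log 3N$, this gives the pointwise bound $\theta(n+h)\le\log 3N$, valid for all $n\sim N$. Consequently
\[
\sum_{\substack{n\sim N\\ (P_{\mathcal H}(n),\,P(R^\eta))>1}} \theta(n+h)\,\Lambda_R(n;\mathcal H,k+l)^2 \ \le\ \log 3N \sum_{\substack{n\sim N\\ (P_{\mathcal H}(n),\,P(R^\eta))>1}} \Lambda_R(n;\mathcal H,k+l)^2 .
\]

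Then I would apply Lemma~\ref{lem:4} to the remaining (untwisted) sum: its bound \eqref{eq:6.2} is $\ll\eta\sum_{n\sim N}\Lambda_R(n;\mathcal H,k+l)^2$, so multiplying through by $\log 3N=(1+o(1))\log N$ yields precisely the claimed estimate $\ll\eta\log N\sum_{n\sim N}\Lambda_R(n;\mathcal H,k+l)^2$. The only point to watch is the admissible range of $R$: \eqref{eq:6.35} will be invoked with $R$ a small power of $N$ (in the sequel with $R=N^\delta$, $\delta$ small), for which the hypothesis $R\le N^{1/(2+\eta)}(\log N)^{-C}$ of Lemma~\ref{lem:4} holds as soon as $\eta<1/\delta-2$; and in the opposite, essentially irrelevant regime where $\eta$ is not small, \eqref{eq:6.35} is trivial, since its right-hand side then already exceeds $\log 3N\cdot S_0(\mathcal H)$, which by Lemma~\ref{lem:1} and the trivial inclusion of the summation range majorises the left-hand side.

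There is thus no genuine obstacle in Lemma~\ref{lem:6}: all of the substance — the concentration phenomenon whereby the $\Lambda_R$-mass attached to those $n$ for which $P_{\mathcal H}(n)$ has a prime factor below $R^\eta$ is only an $O(\eta)$-fraction of the total — has already been established in Lemmas~\ref{lem:3} and \ref{lem:4}. Lemma~\ref{lem:6} merely repackages \eqref{eq:6.2}, trading the sharper prime-weighted statement of Lemma~\ref{lem:5} for the cruder factor $\eta\log N$ via the bound $\theta(n+h)\le\log 3N$; retaining the weight $\theta(n+h)$ on the right would instead require the full argument behind Lemma~\ref{lem:5}, which is not needed for the applications in this paper.
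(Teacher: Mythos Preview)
Your proof is correct and is exactly the approach the paper takes: the paper states that Lemma~\ref{lem:6} ``is a trivial consequence of Lemma~\ref{lem:4}'', and your argument supplies precisely that triviality via the pointwise bound $\theta(n+h)\le\log 3N$ followed by \eqref{eq:6.2}. Your closing paragraph also matches the paper's own commentary (cf.\ the discussion around \eqref{eq:6.36}) that Lemma~\ref{lem:6} is a crude substitute for Lemma~\ref{lem:5} obtained at the cost of a $\log N$.

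One small remark on your range discussion: the slight mismatch between $R\le N^{1/2}(\log N)^{-C}$ here and $R\le N^{1/(2+\eta)}(\log N)^{-C}$ in Lemma~\ref{lem:4} is real but harmless, and the paper itself does not comment on it. Your two-case split (small $\eta$ via Lemma~\ref{lem:4}, $\eta\gg 1$ trivially) is the right way to handle it; just note that in the actual application in Section~\ref{sec:7} one takes $R=N^{(\vartheta-\ve)/(2+\eta)}$, so the hypothesis of Lemma~\ref{lem:4} is satisfied by construction and the issue never arises.
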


Lemma~\ref{lem:6} can relatively well substitute for Lemma~\ref{lem:5} if $h \in \mathcal H$ and the dependence of the constants on $k$ and $l$ in the $\ll$ symbol is not investigated, since by Lemma~\ref{lem:2} we have in fact for $h \in \mathcal H$
\beq
\log R \sum_{n \sim N} \Lambda_R(n; \mathcal H; k + l)^2 \sim C(k,l) \sum_{n \sim N} \theta(n + h) \Lambda_R(n; \mathcal H, k + l)^2
\label{eq:6.36}
\eeq
with a constant $C(k,l)$ depending only on $k$ and $l$, so the right-hand  sides of \eqref{eq:6.35} and \eqref{eq:6.3} are really the same order of magnitude as a function of~$R$ and~$N$.

\section{The execution of Step 2. Partial proof of Theorem~\ref{th:3}}
\label{sec:7}

As mentioned already in the previous section, Lemma~\ref{lem:4} and its trivial consequence Lemma~\ref{lem:6}, together with Lemmas~\ref{lem:1} and \ref{lem:2} contain already sufficient information about primes in almost prime $k$-tuples, needed to prove later Theorems~\ref{th:1} and~\ref{th:2}.

We have, namely, similarly to \eqref{eq:5.8}, by Lemmas \ref{lem:1}, \ref{lem:2}, \ref{lem:4} and \ref{lem:6}, for $R = N^{(\vartheta - \ve)/(2 + \eta)} > (3N)^{1/4}$
\begin{align}
\label{eq:7.1}
&\sum_{\substack{n \sim N\\
(P_{\mathcal H}(n), P(R^\eta)) = 1}} \biggl(\sum_{h \in \mathcal H} \theta(n + h) - \log 3N\biggr) \Lambda_R(n; \mathcal H, k + l)^2\\
&= \frac{\mathfrak S(\mathcal H)}{(k + 2l)!} {2l\choose l} N \log N
(\log R)^{k + 2l} \times\notag\\
&\quad \times\left( \frac{k}{k + 2l + 1} \cdot \frac{2(2l + 1)}{l + 1} \cdot \frac{\vartheta - \ve}{2 + \eta} + o(\eta) - 1 + o(1)\right).\notag
\end{align}
It is easy to see that for any given $\vartheta = \frac12 + \delta$, if $l$ and $k/l$ are chosen sufficiently large, then
\beq
\frac{k}{k + 2l +1} \cdot \frac{2l + 1}{l + 1} \left(\frac12 + \delta\right) > 1.
\label{eq:7.2}
\eeq
Now we can choose $\ve$ and $\eta = c(k, l, \vartheta)$ sufficiently small as to have
\begin{align}
\label{eq:7.3}
&\sum_{\substack{n\sim N\\
(P_{\mathcal H}(n), P(R^\eta)) = 1}} \biggl(\sum_{h \in H} \theta(n + h) - \log 3N\biggr)\Lambda_R(n; \mathcal H, k + l)^2\\
 & \hspace*{30mm} \gg_{k, l, \mathcal H, \vartheta} \, N \log N(\log R)^{k + 2l}.
\notag
\end{align}

However, if $\bigl(P_{\mathcal H}(n), P(R^\eta)\bigr) = 1$, $R > (3N)^{1/4}$, then any $P_{\mathcal H}(n)$ has at most $k \cdot \frac{4}{\eta}$ prime divisors, so we have
\beq
|\Lambda_R(n; \mathcal H, k + l)|^2 \leq \left(\frac{2^{4k/\eta}}{(k + l)!} (\log R)^{k + l} \right)^2.
\label{eq:7.4}
\eeq
Now if we have at most one prime among $n + h_i$ $(i = 1,2, \dots, k)$, then $\sum_{h \in \mathcal H} \theta(n + h) - \log 3N < 0$, so we obtain for the number of $n$'s in $[N, 2N]$ with at least two primes among $(n + h_i)$ and almost primes in all coordinates $n + h_j$ with $P^-(n + h_j) > n^{1/4\eta}$ the lower estimate
\beq
c(k,l, \mathcal H, \vartheta) \frac{N}{(\log R)^k} > c'(k, l, \mathcal H, \vartheta) \frac{N}{(\log N)^k}
\label{eq:7.5}
\eeq
as required by \eqref{eq:4.1} of Theorem~\ref{th:3}.
We remark that the dependence on $l$ and $\mathcal H$ can be omitted, since for $k \to \infty$ we will choose $l = (\sqrt k - 1)/2$ (cf.\ \eqref{eq:8.1}), further we have for any admissible $k$-tuple~$\mathcal H$
\begin{align}
\label{eq:7.6}
\mathfrak S(\mathcal H) :&= \prod_p \left(1 - \frac{|\Omega(p)|}{p} \right) \left(1 - \frac1p\right)^{-k} \\
&\geq \prod_{p \leq 2k} \frac1p \cdot \prod_{p > 2k} \left(1 - \frac{k}{p}\right) \left(1 - \frac1p\right)^{-k} \geq c_3(k). \notag
\end{align}

The extra assertion that we have at least two primes $n + h_i$ and $n + h_j$ in some position $(i,j)$ and we have the same number $b_s$ of prime divisors of $n + h_s$ ($s \neq i,j$, $1 \leq s \leq k$) for all elements $n$ of the progression, is a trivial consequence of the fact that the number of the possible vectors $\bold  b = (b_1, \dots, b_k)$ is bounded (by $(1 / c_1(k))^k$) if all $n + h_s$ components are free of prime factors below $n^{1/c_1(k)}$.
This means that at least one configuration, that is, one vector $\bold  b$ (with at least two entries equal to~$1$) occurs at least $c_4(k) N/\log^k N$ times, fully describing the multiplicative pattern of $n + \mathcal H$ by $\Omega(n + h_s) = b_s$, where differently from the previous two sections $\Omega(n)$ denotes here the number of prime divisors of~$n$.
We may mention that we could require beyond $\Omega(n + h_s) = b_s$ for all $n$ also the stronger property that the exponent  pattern $A_s = \bigl\{\alpha_{s1}, \alpha_{s2}, \dots, \alpha_{sj_s}\bigr\}$ of $n + h_s$ should be the same for all elements $n$ of the progression.
Namely, due to the trivial relation $b_s = \alpha_{s1} + \dots + \alpha_{sj_s}$, any vector $\bold  b = (b_1, \dots, b_k)$ gives rise only to a bounded number of possibilities for the values $\alpha_{st}$.
Hence at least one of them has to appear at least $c_4(k) N/(\log N)^k$ times (with a different value of $c(k)$, however) for $n$'s up to~$N$.

\section{How to choose the parameters $k, l$ and a small $\mathcal H_k$ for a given distribution level~$\vartheta$? Continuation of the proof of Theorem~\ref{th:3}}
\label{sec:8}

In order to prove Theorem~\ref{th:B}, further our present Theorems~\ref{th:1} and \ref{th:2} the values of the parameters $k, l$ could be optimized to yield a minimal $k$ for a given $\vartheta = 1/2 + \delta > 0$ by the aid of computers as long as $\delta$ is not too small
($\delta \geq 1/10$, for example, see the table after (3.4) on p.~832 in \cite{GPY1}).
The crucial inequality to be satisfied is our \eqref{eq:7.2}.

In view of the above, we will focus our attention to small values of $\delta$ (which means large values of $k$ and $l$), although our argument holds for any $\delta \in (0, 1/2]$.
An easy calculation gives that if we did not require $l$ to be an integer, then for a given $k$ the expression on the left-hand side of \eqref{eq:7.2} would be maximal for $l = (\sqrt{k} - 1)/2$, i.e.\ $k = (2l + 1)^2$ and then its value is for $l \geq (2\delta)^{-1}$
\beq
\geq \frac{(2l + 1)^2}{2(l + 1)(2l + 1)} \cdot \frac{2l + 1}{l + 1} \cdot \frac{l + 1}{2l} = \frac{k}{k - 1}.
\label{eq:8.1}
\eeq

We remark that if $\delta = 1/2$ ($\vartheta = 1$) for example, then this argument would give $l = 1$, $k = 9$, whereas $l = 1$, $k = 7$, $\vartheta > 20/21$ already satisfies \eqref{eq:7.2}.
A further improvement is in this case (at least for $\vartheta > 4(8 - \sqrt{19})/15 = 0.97096\dots$) possible by choosing instead of the single optimal $l = 1$ a linear combination of the weight functions $\Lambda_R(n; \mathcal H, k + l)$ for $l = 0$ and $l = 1$. Then the argument works for $k = 6$ already as shown in Section~{3} (cf.\ (3.11)--(3.16)) of \cite{GPY1}.
Since all our earlier arguments remain valid if instead of a single weight function $\Lambda_R(n; \mathcal H, k + l)$ we choose a linear combination of them, the arguments (6.11)--(6.16) of \cite{GPY1} together with our present ones in Sections \ref{sec:5}--\ref{sec:7} prove Theorems~\ref{th:1}--\ref{th:4} for $\vartheta \geq 0.971$.

In order to construct an admissible $k$-tuple $\mathcal H = \{h_i\}^k_{i = 1}$ with a possibly small diameter $h(k) := h_k - h_1$, we can again obtain help from computers for relatively small values of $k$ (cca.\ $k < 100$) as shown by the table after (3.4) in \cite{GPY1}.
However, for any value of $k$ we can choose $\mathcal H$ as the first $k$ primes exceeding $k$, $\{p_{\nu + 1}, \dots, p_{\nu + k} \}$, where $p_\nu \leq k < p_{\nu + 1}$.
This set clearly does not cover the residue class $0$ for $p \leq k$, while for $p > k = |\mathcal H|$ it clearly can not cover all residue classes $\mod p$.
On the other hand the diameter of $\mathcal H$ is by the prime number theorem, that is, by $p_n \sim n \log n$
\beq
h(k) = p_{\nu + k} - p_{\nu + 1} = (1 + o(1)) \left\{ \left( k + \frac{k}{\log k} \right) \log k - k\right\} \sim k \log k
\label{eq:8.2}
\eeq
if $k \to \infty$ (which occurs for $\delta \to 0$).
This is asymptotically probably close to optimal, since in general a set $\mathcal H$ of numbers up to $X$ avoiding at least one residue class $\mod p$ for any $p \leq z$ is heuristically of size at most
\beq
X \prod_{p \leq z} \left(1 - \frac1p\right) \sim X \frac1{e^\gamma \log z},
\label{eq:8.3}
\eeq
whereas our set above has a somewhat larger density $\sim 1/\log p_{\nu + k} \sim 1/\log h_k$.

\section{How do we get strong de Polignac numbers? Completion of the proof of Theorem~\ref{th:3}}
\label{sec:9}

In this section we will show that we obtain at least $c_2(k) N/\log^k N$ numbers $n$ up to~$N$, where $n + h_i$ and $n + h_j$ are consecutive primes.
Let with a fixed sufficiently small $c_1(k)$
\beq
\mathcal B(i, j, N) = \bigl\{n \leq N; \ n + h_i \in \mathcal P, \ n +h_j \in \mathcal P, \ P^-(P_{\mathcal H}(n)) > n^{c_1(k)}\bigr\},
\label{eq:9.1}
\eeq
\beq
\mathcal T = \left\{(i, j);\ j > i, \ \liminf_{N \to \infty} \frac{|\mathcal B(i,j,N)\log^k N|}{N} > 0 \right\},
\label{eq:9.2}
\eeq
and let us choose any given pair $\{s,t\} \in \mathcal T $ with minimal value of $t - s$.
Then for any $h_\mu \in (h_s, h_t)$ we must have clearly
\beq
\liminf_{N\to\infty} \frac{|\mathcal B(\mu, t, N)| \log^k N}{N} = 0,
\label{eq:9.3}
\eeq
so all components $n + h_\mu$ between $n + h_s$ and $n + h_t$ are almost always composite if $n \in \mathcal B(s, t, N)$ as $N \to \infty$.

On the other hand, if we have an arbitrary $h \in (h_s, h_t)$, $h \notin \mathcal H$, then the assumption $n + h \in \mathcal P$ implies for $\mathcal H^+ = \mathcal H \cup h$
\beq
P^-\bigl(P_{\mathcal H^+} (n)\bigr) > n^{c_1(k)}.
\label{eq:9.4}
\eeq
However, by the Selberg sieve (cf.\ Theorem 5.1 of \cite{HR}, or alternatively Theorem~2 of Section 2.22 of \cite{Gre}, or our present Lemma~\ref{lem:1}, the number of such $n \leq N$
\beq
\ll_{k, c_1} \frac{\mathfrak S(\mathcal H \cup \{h\}) N}{\log^{k + 1}N} \ll_{k, c_1} \frac{\mathfrak S(\mathcal H) N \log h_k}{\log^{k + 1}N} \ll_{k, c_1, \mathcal H} \frac{N}{\log^{k + 1}N},
\label{eq:9.5}
\eeq
which means that for a given fixed $\mathcal H$, this case might happen also rarely.
This, together with \eqref{eq:9.3} shows that the number of $n \leq N$ where Theorem~\ref{th:3} is true with two consecutive primes is, similarly to \eqref{eq:4.1}, at least
\beq
\bigl(c_1(k, \mathcal H) + o(1)\bigr) \frac{N}{\log^k N}.
\label{eq:9.6}
\eeq

\section{Application of the method of Green and Tao. Proofs of Theorem~\ref{th:1} and \ref{th:2}}
\label{sec:10}

Since Theorem~\ref{th:2} is a more general form of Theorem~\ref{th:1}, it is clearly sufficient to prove just Theorem~\ref{th:2}.
This is relatively easy and straightforward as we now already proved Theorem~\ref{th:3}.
So we have for any admissible $\mathcal A$ with $r \geq (2 \lceil 1/2\delta \rceil + 1)^2$ elements a set $\mathcal N^*(\mathcal A) = \mathcal N^* \subset \mathbb N$ at our disposal with the properties that with some $i,j \in \{1, \dots, r\}$ and some $b_s \leq C(r)$ we have for $n \in \mathcal N^*$
\begin{gather}
\label{eq:10.1}
n + a_i \ \text{ and }\ \ n + a_j \ \text{ are consecutive primes},\\
\label{eq:10.2}
\Omega(n + a_s) = b_s, \ P^-(n + a_s) \geq n^{c_1(r)} \text{ for } s \in \{1, \dots, r\},\\
\label{eq:10.3}
\bigl|\{n \leq N; \ n \in \mathcal N^*\}\bigr| \geq c_1(r, \mathcal A) \frac{N}{\log^r N}.
\end{gather}
As remarked at the end of Section~\ref{sec:7} the condition $\Omega(n + a_s) = b_s$ might be even replaced by the stronger condition that the exponent pattern of $n + a_s$ should be $\boldsymbol {\alpha}_s = \bigl\{\alpha_{s1}, \ldots, \alpha_{sj_s}\bigr\}$.

This set $\mathcal N^*$ has a positive lower density in the set $\wt{\mathcal N}$ of all integers satisfying
\beq
P^- \bigl(P_{\mathcal A}(n)\bigr) \geq n^{c_1(r)},
\label{eq:10.4}
\eeq
due to the already mentioned Theorem~5.1 of \cite{HR}, Theorem 2.2.2.2 of \cite{Gre}, or our Lemma~\ref{lem:1} (cf.\ \eqref{eq:9.4}--\eqref{eq:9.5}).

\smallskip
\noindent
{\bf Remark.}
The above formulation shows that the generalization of the somewhat heuristic description, appearing in many works of Green and Tao that during their proof the primes are embedded into the set of almost primes with positive (lower) density can be proved in an exact form (\eqref{eq:10.1}--\eqref{eq:10.4}) in our case as well.

\medskip
The proof now follows closely that of Green and Tao (cf.\ Sections 9--10 and the Appendix of \cite{GT}).
Our task is made even easier by the recent work of Binbin Zhou \cite{Zho}, where he proved the existence of arbitrary long arithmetic progressions of Chen primes, where for the sake of convenience he defined $p$ to be a Chen prime if
\[
p \in \mathcal P, \ \Omega(p + 2) \leq 2, \  P^-(p + 2) \geq P^{1/10}
\]
and used the lower bound $CN/\log^2 N$ for the number of Chen primes below~$N$.

\medskip
In fact we can formulate our result in the following general form.

\begin{theorem}
\label{th:5}
Let $\mathcal A = \{a_1, \dots, a_r\} \subseteq [0,A] \cap \mathbb Z$, $\mathcal P_{\mathcal A}(n) = \prod\limits^r_{i = 1}(n + a_i)$.
Let $P^-(n)$ denote the least prime divisor of~$n$.
Let the set $\mathcal N(\mathcal A)$ satisfy
\beq
\mathcal N(\mathcal A) \subseteq \bigl\{n; P^-(P_{\mathcal A}(n)) \geq n^{c_1}\bigr\}, \quad \bigl|\{n \leq X;\ n \in \mathcal N(\mathcal A)\}\bigr| \geq \frac{c_5 X}{\log^r X},
\label{eq:10.5}
\eeq
with $c_1, c_5 > 0$ for $X > X_0$.
Then $\mathcal N(\mathcal A)$ contains $m$-term arithmetic progressions for any $m > 0$.
\end{theorem}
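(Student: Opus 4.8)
The plan is to realise $\mathcal N(\mathcal A)$ as a set of positive relative density inside a pseudorandom ``almost prime tuple'' measure and then invoke the Green--Tao transference machinery. Fix $m$. First I would choose, following Section~10 of \cite{GT}, a parameter $R = N^{c(r,m)}$ with $c(r,m)$ small enough (depending only on $r$ and $m$) and build from the truncated divisor sums $\Lambda_R(\,\cdot\,; \mathcal A)$ of type \eqref{eq:2.3} a weight
\beq
\ol\nu(n) := c(r,m) \prod^r_{i=1} \Bigl(\frac{\Lambda_R(n + a_i)}{\log R}\Bigr)^2 ,
\label{eq:prop1}
\eeq
normalised so that $\ol\nu$ has average $1 + o(1)$ over a suitable progression $n \equiv b\ (W)$ with $W = \prod_{p \le w} p$ the usual $W$-trick modulus (this clears the small primes dividing $\mathcal P_{\mathcal A}(n)$, which is exactly where admissibility of $\mathcal A$ — guaranteed by $P^-(P_{\mathcal A}(n)) \ge n^{c_1}$ — is used). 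The Goldston--Y{\i}ld{\i}r{\i}m asymptotics, reproduced in the Appendix of \cite{GT}, give that $\ol\nu$ satisfies the linear forms condition and the correlation condition, hence is a pseudorandom measure in the sense of \cite{GT}.

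Next I would verify the \emph{majorisation} $\mathbf 1_{\mathcal N(\mathcal A)}(n) \ll \ol\nu(n)$ on the arithmetic progression where $\ol\nu$ lives. This is the point where the hypothesis $P^-(P_{\mathcal A}(n)) \ge n^{c_1}$ enters a second time: if every $n + a_i$ is free of prime factors below $n^{c_1}$, then after the $W$-trick each $\Lambda_R(n+a_i)$ is, up to the normalisation, bounded below by a fixed positive constant (the divisor sum collapses to few terms), so $\ol\nu(n) \gg c(r,m) > 0$ on $\mathcal N(\mathcal A)$. Combined with the density lower bound $|\{n \le X : n \in \mathcal N(\mathcal A)\}| \ge c_5 X/\log^r X$ from \eqref{eq:10.5} — which matches the order of the total mass $\sum_{n \le X} \ol\nu(n) \asymp X/\log^r X$ up to the constant — one gets that the normalised indicator $f(n) := \mathbf 1_{\mathcal N(\mathcal A)}(n)/\ol\nu(n)$ obeys $0 \le f \le 1$ and has positive mean, uniformly in $X$. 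Here one splits $\mathcal N(\mathcal A)$ according to the residue $b$ mod $W$ and passes to the most populous class, so that the positive density survives the restriction.

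Finally I would apply the Green--Tao relative Szemer\'edi theorem (Theorem~3.5 of \cite{GT}): a function $0 \le f \le \ol\nu$ with $\ol\nu$ pseudorandom and $\mathbb E(f) \ge \delta_0 > 0$ contains, for $N$ large, $\gg_{m,\delta_0} N^2$ nontrivial $m$-term progressions. Each such progression, pulled back through the $W$-trick, is an $m$-term progression $n, n + D, \dots, n + (m-1)D$ lying entirely in $\mathcal N(\mathcal A)$; discarding the $o(N^2)$ degenerate ones leaves genuine $m$-term progressions, proving Theorem~\ref{th:5}. The main obstacle is the first step: checking that the product weight \eqref{eq:prop1} for an $r$-tuple $\mathcal A$ still satisfies the linear forms and correlation conditions with the small exponent $R = N^{c(r,m)}$ — this is not automatic for products over a tuple and requires the full Goldston--Y{\i}ld{\i}r{\i}m-type computation of \cite{GT}'s Appendix, now carried out for the forms $\{n + a_i\}$ rather than for a single progression; everything after that is a direct transcription of \cite{GT}.
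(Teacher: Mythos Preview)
Your proposal is correct and follows essentially the same route as the paper: build a pseudorandom measure from the product weight $\prod_i \Lambda_R(Wn+b+a_i)^2$ after the $W$-trick with $R$ a small power of $N$, use $P^-(P_{\mathcal A}(n))\ge n^{c_1}$ to get $\Lambda_R=\log R$ exactly on $\mathcal N(\mathcal A)$ (so the majorisation and positive relative density hold), and feed this into the Green--Tao transference theorem. The paper additionally routes the pseudorandomness verification through Zhou's $r=2$ (Chen prime) argument in \cite{Zho}, recording the three places where general $r$ forces a change --- the impossibility of $p\mid a_u-a_v$ for $p>W$ in the linear forms condition, the enlarged discriminant $\Delta$ in the correlation condition, and the use of the generalised H\"older inequality --- but these are precisely the ``full Goldston--Y{\i}ld{\i}r{\i}m-type computation for the forms $\{n+a_i\}$'' that you flag as the main obstacle.
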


\noindent
{\bf Remark.}
This is clearly a generalization of the results of Green--Tao ($r = c_1 = 1$) and Zhou ($r = 2$, $c_1 = 1/10$).

\smallskip
\noindent
{\bf Remark.}
The terms of the arithmetic progression of length $m$ are below $N$ if $N > N_0(c_1, c_2, r, A, m)$ and their total number is at least $c_3(r, A, m) N^2/\\
/\log^{rm} N$.

\smallskip
\noindent
{\bf Remark.}
Theorem~\ref{th:5} trivially shows that the twin primes really contain arbitrarily long arithmetic progressions if their number up to $x$, $\pi_2(x) \gg x / \log^2 x$.
This result is implicitly contained in \cite{Zho} as well.

\smallskip
\noindent
{\bf Remark.}
$P^-\bigl(P_{\mathcal A}(n)\bigr) \geq n^{c_1}$ implies that $\mathcal A$ is admissible, since otherwise $P_{\mathcal A}(n)$ would have a fixed prime divisor~$p \leq r$.

\smallskip
\noindent
{\bf Remark.}
Since the proof is analogous to that in \cite{Zho}, which in fact is analogous to that in \cite{GT} we will point out only the essential differences.

\smallskip
\noindent
{\bf Remark.}
The above said strong analogy is only true if $\mathcal A$ is considered to be fixed, more precisely if
\beq
\mathcal A \subseteq [1, A] \quad \text{ with a bounded }\ A.
\label{eq:10.6}
\eeq
Otherwise, when $A$ is allowed to increase with $N$, serious difficulties may occur with the linear form property.

\smallskip
As the reader observed we changed our set $\mathcal H$ to $\mathcal A$, the elements $h_i$ to $a_i$ and the size $k$ to $r$, compared with Sections~\ref{sec:1}--\ref{sec:9}.
This is necessary since $\mathcal H$ and $k$ are used in \cite{GT} (and \cite{Zho}) to denote other quantities, namely our aim is to show the existence of $k$-term arithmetic progressions in~$\mathcal N^*$.

The definition of $W = W(N)$ and $w = w(N)$ remain the same,
\beq
W = \prod_{p \leq w} p,
\label{eq:10.7}
\eeq
where $w = w(N)$ and thereby $W = W(N)$ is a function of~$N$, sufficiently slowly growing to infinity with~$N$.

In the following we will suppose that $n \in \mathcal N^*$.
In this case $(n + a_i, W) = 1$.
Following Zhou (Section~\ref{sec:2}) we will choose a $b \mod W$ with $(Wm + b + a_i, W) = 1$, through first choosing a $b_p \mod p$ for every prime $p \mid W$ with $b_p \not\equiv - a_i(\mod p)$ and then applying the chinese remainder theorem to obtain $b \equiv b_p(\mod p)$ for each $p \mid W$.
Since we have for any $p$ exactly $p - |\Omega_{\mathcal A}(p)|$ possibilities for $b_p$ (where $\Omega_{\mathcal A}(p) = \{-a_i \mod p\}^r_{i = 1}$, as in Section~\ref{sec:5}), we obtain for the cardinality of the set $X_W$ of possible choices of $b \mod W$ the quantity (we note that we can suppose $r < w \to \infty$)
\begin{align}
\label{eq:10.8}
|X_W|
&= W \cdot \prod_{p \leq A} \left(1 - \frac{|\Omega_{\mathcal A}(p)|}{p} \right) \prod_{A < p \leq w(n)} \left(1 - \frac{r}{p} \right) \\
&\leq C(A) \prod_{p \mid W} \left(1 - \frac1{p}\right)^r = C(A) \left(\frac{\varphi(W)}{W} \right)^r
\notag
\end{align}
for every admissible set $\mathcal A$ with a uniform constant $C(A)$.
If we do not indicate further dependence on either $A$, $r$ or $k$, then we obtain by \eqref{eq:10.3}
\beq
\sum_{b \in X_W} \bigl| n \in [\ve_k N, 2 \ve_k N]; \ Wn + b \in \mathcal N^*\bigr| \gg \frac{N}{\log^r N},
\label{eq:10.9}
\eeq
since $P^-\bigl(P_{\mathcal A}(Wn + b)\bigr) > n^c > W$ implies $(Wn + b + a_i, W) = 1 \Leftrightarrow b \in X_W$.
Thus by \eqref{eq:10.8} we can choose a fixed residue class $b \mod W$
(depending on $\mathcal A$), $0 \leq b < W$, such that the set
\beq
|X| := \bigl|\bigl\{n \in [\ve_k N, 2 \ve _k N]\bigr\}; \ Wn + b \in \mathcal N^*\bigr| \gg \frac{\ve_k N}{\log^r N} \cdot \left(\frac{W}{\varphi(W)}\right)^r,
\label{eq:10.10}
\eeq
where $\ve_k = 1/(2^k(k + 4)!)$.
Our measure $\nu$ is now, similarly to (2.5) of \cite{Zho} defined on $\mathbb Z_N$ by
\beq
\nu(n) := \begin{cases}
\left(\dfrac{\varphi(W)}{W}\right)^r \prod\limits^r_{i = 1} \dfrac{{\Lambda_R}(W n + b + a_i)^{2}}{\log R} & \text{if } \ n \in [\ve_k N, 2\ve_k N]\\
1 & \text{otherwise}
\end{cases},
\label{eq:10.11}
\eeq
where $\Lambda_R$ is given in \eqref{eq:2.1}, $R$ will be chosen as a sufficiently small power $(< c(r, m))$ of $N$, thereby satisfying
\beq
(Wn + b + a_i, R) = 1 \quad \text{ for } \ n \in X;
\label{eq:10.12}
\eeq
due to the crucial condition \eqref{eq:10.2} and the definition of $X$ in \eqref{eq:10.10}, taking into account that $W \ve_k > 1$ by $W \to \infty$.
Now \eqref{eq:10.12} implies trivially for $n \in [\ve_k N, 2 \ve_k N] \cap {\mathcal N}^*$
\beq
\Lambda_R(Wn + b + a_i) = \log R, \ \text{ so } \ \nu(n) = \left(\frac{\varphi(W)}{W} \right)^r (\log R)^r .
\label{eq:10.13}
\eeq
This means that defining (in analogy with (2.1) of \cite{Zho})
\beq
\wt \Lambda_{\mathcal A}(n) := \begin{cases}
\frac{\varphi(W)}{W} \log (Wn + b) &\text{if }\ Wn + b \in \mathcal N^*\\
0 & \text{otherwise}
\end{cases} ,
\label{eq:10.14}
\eeq
we have
\beq
\nu(n) \geq f(n) := \bigl[k^{-1} 2^{-k - 5} \wt\Lambda_{\mathcal A}(n)\bigr]^r \ \text{ for } \ n \in [\ve_k N, 2\ve_k N].
\label{eq:10.15}
\eeq

The proof that $\nu(n)$ satisfies the $k$-pseudorandomness property follows that of \cite{Zho}, which again follows the proof of \cite{GT} which are essentially the special cases $r = 2$ and $r = 1$ of our case.
The fact that $\nu$ is a measure, that is,
$E(\nu) = 1 + o(1)$ is the special case of the linear form property ($m = 1$, $b = 1$, $\psi_1(x) = x_1$, $B = [\ve_\kappa N, 2 \ve_\kappa N]$).
The proof of the linear form condition runs completely analogously to that of \cite{Zho} (which is nearly the same as that of \cite{GT}), a crucial point being here that if a prime $p > W$ would satisfy
\beq
p \mid W \biggl( \sum^t_{l = 1} L_{il} x_l\biggr) + b + a_u \ \text{ and } \ p \mid W \biggl(\sum^t_{l = 1} L_{il} x_j\biggr) + b + a_v,
\label{eq:10.16}
\eeq
then obviously $p \mid a_u - a_v$, which is a contradiction since
\beq
\mathcal A = \{a_1, \dots, a_r\} \subseteq [1, A] \quad A \text{ is bounded, } \ W = W(N) \to \infty \ \text{ as } N \to \infty.
\label{eq:10.17}
\eeq
Hence, for $u \neq v$ \ $\theta_i(\bold  x) + a_u \equiv \theta_i(\bold  x) + a_v \equiv 0$ $(\mod p)$ is impossible.
(This step is not valid if we allow $A \to \infty$ with $A \geq W(N)$.)
Concerning the analogue of Proposition~2.10 of \cite{Zho}, or (9.10) of \cite{GT} according to which the measure $\nu$ satisfies the $2^{k - 1}$-correlation condition, the main difference is that the role of $\Delta$ is played here (cf.\ Proposition~2.6 of \cite{Zho} or Proposition~9.6 of \cite{GT}) by the quantity
\beq
\Delta = \prod_{1 \leq i < j \leq m} (h_i - h_j) \prod_{1 \leq u < v \leq r} \bigl(W(h_i - h_j) + a_u - a_v\bigr) \ll N^{{m\choose 2}{r \choose 2}}
\label{eq:10.18}
\eeq
which, however, still obeys the estimate (10.15) of \cite{GT}, namely
\beq
\Delta \ll R^{O_{m,r}(1)}.
\label{eq:10.19}
\eeq
Afterwards, the analogue of Lemma 2.9 of \cite{Zho} or Lemma 9.9 of \cite{GT} is here again slightly more difficult, in the sense that we need the generalized H\"older's inequality instead of the standard case, applied in \cite{Zho}, to infer that with the notation $I = [1,N]$, $S(n) = \prod\limits_{1 \leq u < v \leq r} (Wn + a_u - a_v)$ we have
\beq
E \biggl( \prod_{p \mid nS(n),\, p > w} \bigl(1 + p^{-1/2}\bigr)^{O_{m,r(q)}} \mid I \biggr) = O_{m,r,q}(1).
\label{eq:10.20}
\eeq
The quantity on the left-hand side is, namely, with the notation $B = {r \choose 2} + 1$
\beq
\leq E \biggl( \prod_{\substack{p \mid n\\ p > w}} \bigl(1 + p^{-\frac18}\bigr) \mid I \biggr)^{1/B} \prod_{1 \leq u < v \leq r} E \biggl( \prod_{p \mid Wn + a_u - a_v} \bigl(1 + p^{-\frac18}\bigr) \mid I \biggr)^{1/B}
\label{eq:10.21}
\eeq
and then the rest of Lemma 2.9 of \cite{Zho} can be followed using again the crucial property~\eqref{eq:10.17}.

So we obtain, similarly to \cite{Zho} and \cite{GT} for every value of $m$ actually $cN^2 / (\log N)^{rm}$ $m$-term arithmetic progressions in the set $\mathcal N^*(\mathcal A)$, where $c$ depends on $r$, $a_r$,  $m$, and $\mathcal A = \{a_i\}^r_{i = 1}$.
This proves our Theorem~\ref{th:5}, and consequently Theorems~\ref{th:1} and \ref{th:2}, too.

\section{The density of the de Polignac numbers}
\label{sec:11}

Let us suppose that any admissible $k$-tuple $\mathcal H$ produces at least two consecutive primes infinitely often.
Then this phenomenon clearly occurs infinitely often for prime pairs $n + h_i$, $n + h_j$ in the same position, i.e.\ we have at least one strong de Polignac number $d$ among $h_i - h_j$ $(i > j)$.
The question is: how many different $d$'s do we get at least by choosing all possible admissible sets $\mathcal H$ with elements at most~$N$, if $N$ is large $(N \to \infty)$.
Let
\beq
P := P(k) := \prod_{p \leq k} p,
\label{eq:11.1}
\eeq
where we can clearly suppose $P|N$, as the size of $k$ and thereby $P$ remains fixed and $N \to \infty$.

In order to show admissibility we may suppose that we choose all elements of all $k$-tuples $\mathcal H$ from the set
\beq
\mathcal M := \{m \leq N; \ (m, P) = 1\}, \ \text{ where } \ M := |\mathcal M| = \frac{\varphi(P)}{P} N;
\label{eq:11.2}
\eeq
thereby excluding the residue class $0$ for each $p \leq k$.
Since every admissible $k$-tuple $\mathcal H$ gives rise to at least one strong de Polignac number $d$, we obtain at least ${M \choose k}$ strong de Polignac numbers below $N$, counted with multiplicity according to the $k$-tuples $\mathcal H$.
A fixed value $d$ might appear as the difference of two elements of $\mathcal H$ with at most $M - 1$ choices for the larger element and afterwards the smaller element is determined uniquely.
Furthermore, we have ${M - 2\choose k - 2}$ choices for the remaining $k - 2$ elements of~$\mathcal H$.
This implies that we obtain at least
\beq
\frac{{M\choose k}}{(M - 1) {M - 2\choose k - 2}} = \frac{M}{k(k - 1)} = \frac{N \varphi(P)}{Pk(k - 1)}
\label{eq:11.3}
\eeq
different strong de Polignac numbers $d$ until $N$, which proves~\eqref{eq:4.8}.

In case of $\vartheta \geq 0.971$ we can work with $k = 6$ tuples, so $P = 30$, $\varphi(30) = 8$, which proves \eqref{eq:4.9}, hence Theorem~\ref{th:4} is proved completely.

\section{Further problems}
\label{sec:12}

It is clear from our work that Steps 1 and 2 (Lemmas \ref{lem:3}--\ref{lem:6}) led to some new information about primes in tuples, in particular about the frequency of the occurrence of two primes in any admissible $k$-tuple for $\vartheta > 1/2$ (Theorem~\ref{th:3}) and that under the same condition we have more than one even~$d$, in fact a positive proportion of all numbers, which appear infinitely many times as the difference of two consecutive primes.
We mentioned that Lemma~\ref{lem:4} might help to deduce unconditionally that a positive proportion of gaps between consecutive primes are less than $\eta \log p$ for any fixed $\eta > 0$.
The question still arises: does the combination of the two methods of \cite{GT} and \cite{GPY} yield also some unconditional results?
The answer is yes.
We mention a few of them.

\begin{theorem}
\label{th:6}
Let $\mathcal H = \{h_i\}^k_{i = 1}$ be an admissible $k$-tuple for any $k \geq 1$.
Then there exist arbitrarily long arithmetic progressions of primes $n$ such that all $n + h_i$'s are almost primes and with some vectors $\boldsymbol \alpha_i = (\alpha_{i1}, \dots, \alpha_{ij_i})$
\beq
P^-(n + h_i) > n^{c_1(k)}, \ \ n + h_i \ \text{ has exponent pattern }\ \boldsymbol \alpha_i,
\label{eq:12.1}
\eeq
for all values of $n$ in the progression.
(Here $b_i = \sum_{1 \leq s \leq j_i} \alpha_{is} \leq 1/c_1(k)$.)
\end{theorem}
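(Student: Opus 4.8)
The plan is to deduce Theorem~\ref{th:6} from Theorem~\ref{th:5}. It suffices to produce, unconditionally, a set of integers $n\le N$ of cardinality $\gg_{k,\mathcal H}N/\log^k N$ whose associated tuples $\{n+h_i\}_{i=1}^k$ consist entirely of almost primes, with $P^-(n+h_i)>n^{c_1(k)}$ and one and the same exponent pattern $\boldsymbol\alpha_i$ in each coordinate $i$; Theorem~\ref{th:5} then supplies the arbitrarily long arithmetic progressions inside this set. Because we drop the ``at least two primes'' requirement of Theorems~\ref{th:2} and \ref{th:3}, no hypothesis $\vartheta>1/2$ is needed here: the only arithmetic input is a lower bound sieve, which in this context is pure Selberg-sieve bookkeeping.

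First I would establish the required density of almost prime $k$-tuples. Take $R=N^{1/4}$ and a fixed small $\eta>0$. By Lemma~\ref{lem:1} the full weighted sum $\sum_{n\sim N}\Lambda_R(n;\mathcal H,k+l)^2$ is $\gg_{k,l,\mathcal H}N(\log R)^{k+2l}$, while by Lemma~\ref{lem:4} the part of it coming from $n$ with $(P_{\mathcal H}(n),P(R^\eta))>1$ is at most $O(\eta)$ times the whole; hence the sum over the remaining $n$, all of which satisfy $P^-(P_{\mathcal H}(n))>R^\eta=n^{c_1(k)}$ with $c_1(k):=\eta/4$, is still $\gg_{k,l,\mathcal H}N(\log R)^{k+2l}$. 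On those $n$ one has $\Lambda_R(n;\mathcal H,k+l)^2\ll_{k,l,\eta}(\log R)^{2(k+l)}$, exactly as in \eqref{eq:7.4}, so the number of such $n\sim N$ is $\gg_{k,l,\mathcal H}N/\log^k N$. Alternatively one may simply quote Theorem~7.4 of \cite{HR}, which yields the same order of magnitude with $c_1(k)$ of size $1/(k\log k)$.

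Next I would sort these $n$ by multiplicative type, exactly as in the closing paragraph of Section~\ref{sec:7}. The inequality $P^-(n+h_i)>n^{c_1(k)}$ forces $\Omega(n+h_i)\le 1/c_1(k)$, so each $n+h_i$ realises one of only finitely many (depending on $k$ alone) exponent patterns, and the vector $(\boldsymbol\alpha_1,\dots,\boldsymbol\alpha_k)$ of these patterns takes at most $C(k)$ distinct values; by the pigeonhole principle one fixed configuration occurs for a subset $\mathcal N^*(\mathcal H)$ of the above $n\le N$ of cardinality still $\gg_{k,\mathcal H}N/\log^k N$.

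Finally I would apply Theorem~\ref{th:5} with $\mathcal A=\mathcal H$, $r=k$, $c_1=c_1(k)$ and $\mathcal N(\mathcal A)=\mathcal N^*(\mathcal H)$: both conditions in \eqref{eq:10.5} hold by construction, so $\mathcal N^*(\mathcal H)$ contains $m$-term arithmetic progressions for every $m$, and along any such progression every $n+h_i$ is an almost prime with $P^-(n+h_i)>n^{c_1(k)}$, exponent pattern $\boldsymbol\alpha_i$, and $b_i=\sum_{1\le s\le j_i}\alpha_{is}\le 1/c_1(k)$, which is the assertion. The only step that calls for any thought is the first: $\eta$ (equivalently $c_1(k)$) must be taken small enough, as a function of $k$, that the $k$-dimensional lower bound sieve genuinely beats the trivial bound and leaves $\gg N/\log^k N$ almost prime tuples. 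Everything afterwards is the pigeonhole argument already used for Theorem~\ref{th:3}, followed by a direct invocation of Theorem~\ref{th:5}.
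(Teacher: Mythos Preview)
Your argument is correct as far as it goes, but it proves a strictly weaker statement: you produce arbitrarily long arithmetic progressions of $n$ with all $n+h_i$ almost primes of fixed exponent pattern, whereas Theorem~\ref{th:6} asks for arithmetic progressions of \emph{primes}~$n$. You have dropped not only the ``at least two primes'' clause but also the ``at least one prime'' clause, and this is exactly why your proof invokes only Lemmas~\ref{lem:1} and~\ref{lem:4}, while the paper explicitly lists Lemmas~\ref{lem:1}, \ref{lem:2}, \ref{lem:4}, \ref{lem:6} and Theorem~\ref{th:5} as the ingredients; the presence of Lemmas~\ref{lem:2} and~\ref{lem:6} (both involving $\theta$) signals that a genuine prime must be captured.

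The repair is short and stays unconditional. Fix $h\in\mathcal H$ and combine Lemma~\ref{lem:2} with $m=1$ (available via Bombieri--Vinogradov, $\vartheta=1/2$) and Lemma~\ref{lem:6} to get
\[
\sum_{\substack{n\sim N\\ (P_{\mathcal H}(n),P(R^\eta))=1}}\theta(n+h)\,\Lambda_R(n;\mathcal H,k+l)^2\ \gg\ N(\log R)^{k+2l+1}
\]
for $\eta$ small enough. On the support $\theta(n+h)\le\log 3N$ and $\Lambda_R^2\ll(\log R)^{2(k+l)}$ by \eqref{eq:7.4}, so the number of $n\sim N$ with $n+h$ prime and $P^-(P_{\mathcal H}(n))>R^\eta$ is $\gg N/\log^k N$. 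After translating so that the prime coordinate sits at the origin, your pigeonhole step on exponent patterns and the invocation of Theorem~\ref{th:5} with $\mathcal A=\mathcal H$, $r=k$ go through unchanged.
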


It is not a consequence of the stated results but using the method of \cite{GGPY1--3} in combination with that  of \cite{GT} one can show the following unconditional results.

\begin{theorem}
\label{th:7}
Let $\mathcal H = \{h_i\}^k_{i = 1}$ be any admissible $k$-tuple for any $k \geq 3$.
Then there exist arbitrarily long arithmetic progressions of $n$ values such that for some $i,j \in \{1, \dots, k\}$, $i \neq j$, $n + h_i$ and $n + h_j$ are semiprimes (that is product of two different primes, i.e.\ having exponent pattern $(1,1)$) and some vectors $\bold \alpha_s$ for $s \in \{1, \dots, k\}$, $s \neq i, j$ such that
\beq
P^-(n + h_s) > n^{c_1(k)}, \ \ n + h_s \ \text{ has exponent pattern } \ \boldsymbol \alpha_s.
\label{eq:12.2}
\eeq
\end{theorem}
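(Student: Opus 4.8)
The plan is to mirror the strategy that produced Theorems~\ref{th:1}--\ref{th:3}, but to replace the Goldston--Pintz--Y{\i}ld{\i}r{\i}m input (which produces two \emph{primes} in an admissible $k$-tuple under the assumption $\vartheta>1/2$) by an \emph{unconditional} input that produces two \emph{semiprimes} (exponent pattern $(1,1)$) in a $k$-tuple with $k\ge 3$, with all remaining components cleared of small prime factors. This is exactly the kind of statement the method of Goldston--Graham--Pintz--Y{\i}ld{\i}r{\i}m (the \cite{GGPY1--3} papers cited in the text) delivers: using the Bombieri--Vinogradov theorem for the level $\vartheta=1/2$ suffices because one is now only demanding $E_2$-numbers rather than primes, which relaxes the crucial inequality analogous to \eqref{eq:7.2} enough to be satisfied unconditionally once $k\ge 3$. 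So the first step is to invoke that method to obtain, for a fixed admissible $\mathcal H=\{h_i\}_{i=1}^k$, a positive proportion (at least $c(k,\mathcal H)N/\log^k N$) of $n\le N$ with $n+h_i, n+h_j$ semiprimes for some fixed pair $i\ne j$, all $n+h_s$ satisfying $P^-(n+h_s)>n^{c_1(k)}$, and — by the same pigeonhole argument used at the end of Section~\ref{sec:7} — with one fixed exponent-pattern vector $\boldsymbol\alpha_s$ occurring for each $s\ne i,j$ on a positive proportion of these $n$.

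Second, having this lower bound of the shape $c_5 X/\log^r X$ for a set $\mathcal N(\mathcal A)\subseteq\{n:P^-(P_{\mathcal A}(n))\ge n^{c_1}\}$ (here $\mathcal A=\mathcal H$, $r=k$), I would simply feed it into Theorem~\ref{th:5}, which is stated in precisely the generality needed: it guarantees arbitrarily long arithmetic progressions inside any such set $\mathcal N(\mathcal A)$. Since the multiplicative pattern (the $\boldsymbol\alpha_s$'s and the positions $i,j$ of the two semiprimes) is the same for every element of $\mathcal N(\mathcal A)$ by construction, every term of the progression automatically has the stated structure \eqref{eq:12.2}. The only subtlety is that Theorem~\ref{th:5} requires $\mathcal A\subseteq[1,A]$ with $A$ bounded; this is harmless here since $\mathcal H$ is fixed, exactly as in the proof of Theorems~\ref{th:1}--\ref{th:2}, and the linear-form property used in the Green--Tao machinery goes through for the same reason \eqref{eq:10.16}--\eqref{eq:10.17}.

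The main obstacle is the first step: verifying in detail that the $\Lambda_R$-weighted argument of \cite{GGPY1--3}, when combined with the almost-prime concentration estimates of our Lemmas~\ref{lem:3}--\ref{lem:6}, really does yield two semiprimes (not merely two $P_2$-numbers, and not merely ``at least one prime factor pattern'') together with the strong sifting condition $P^-(n+h_s)>n^{c_1(k)}$ in \emph{all} coordinates. One has to run the $E_2$-detecting weight of Goldston--Graham--Pintz--Y{\i}ld{\i}r{\i}m through the same truncation-by-small-prime-factors device of Section~\ref{sec:6}, check that the analogue of \eqref{eq:7.1} stays positive for $k\ge 3$ at level $\vartheta=1/2-\ve$, and then extract the common exponent pattern by boundedness of the number of admissible vectors $\boldsymbol\alpha$. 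Once the $c(k,\mathcal H)N/\log^k N$ lower bound is in hand, the passage to arithmetic progressions via Theorem~\ref{th:5} is immediate and requires no further work.
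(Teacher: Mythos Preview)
The paper does not actually prove Theorem~\ref{th:7}. It is stated in Section~\ref{sec:12} among ``Further problems'', introduced with the remark that ``using the method of \cite{GGPY1--3} in combination with that of \cite{GT} one can show the following unconditional results'', and the author explicitly writes afterwards: ``To the proof of Theorems \ref{th:7}--\ref{th:11} we shall return in a later work.'' So there is no proof in the paper to compare your proposal against.

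That said, your proposal follows precisely the route the paper indicates. The paper even records the intermediate step you need as a separate (also unproved) statement, Theorem~\ref{th:10}: for any admissible $k$-tuple with $k\ge 3$ there are $\ge c_2(k,\mathcal H)N/\log^k N$ values $n\le N$ with two semiprimes and almost primes in the remaining components. Your plan --- establish this density lower bound via the Goldston--Graham--Pintz--Y{\i}ld{\i}r{\i}m weights combined with the small-prime-factor concentration estimates of Lemmas~\ref{lem:3}--\ref{lem:6}, pigeonhole to fix the exponent patterns as at the end of Section~\ref{sec:7}, and then feed the resulting set into Theorem~\ref{th:5} --- is exactly the scheme the author announces. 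The identification of the ``main obstacle'' (carrying the $E_2$-detecting weight through the truncation device of Section~\ref{sec:6} and verifying positivity at level $\vartheta=1/2$ for $k\ge 3$) is accurate and is precisely the part the paper defers.
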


\noindent
{\bf Corollary.}
{\it There exist arbitrarily long arithmetic progressions of generalized twin semiprime pairs $(q, q + d)$ where $d = 2, 4$ or $6$.
The same is true for $d = 6D$ or $12D$ for any integer~$D$.}

\begin{theorem}
\label{th:8}
Let $\boldsymbol \alpha = (\alpha_1, \dots, \alpha_j)$ be any exponent pattern which includes at least one $\alpha_i = 2$ and at least three different entries equal to~$1$.
Then there exist arbitrarily long arithmetic progressions of numbers $n$ such that all $n$ and $n + 1$ in the progression have exponent pattern~$\boldsymbol\alpha$.
In particular we have arbitrarily long arithmetic progressions of integers $n$ satisfying simultaneously
\beq
\omega(n) = \omega(n + 1) = 4, \ \
\Omega(n) = \Omega(n + 1) = 5, \ \
d(n) = d(n + 1) = 24.
\label{eq:12.3}
\eeq
The above assertion is true if the triplet $(4,5,24)$ is substituted by $(4 + B,\, 5 + B,\, 24 \cdot 2^B)$ or $(5, 5+B, 24\cdot (B+1))$ for any $B \geq 0$.
The same is true (not necessarily simultaneously) for any of the equations
\beq
\omega(n) = \omega(n + 1) = A, \ \
\Omega(n) = \Omega(n + 1) = B, \ \
d(n) = d(n + 1) = C,
\label{eq:12.4}
\eeq
where $A, B, C$ are any integers with $A \geq 3$, $B \geq 4$, $24 \mid C$.
\end{theorem}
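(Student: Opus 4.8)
The plan is to deduce the statement from Theorem~\ref{th:7} (applied to a suitable admissible $3$-tuple) together with a multiplicative scaling trick. First I would observe that Theorem~\ref{th:7} with $k = 3$ and the admissible triple $\mathcal H = \{0, 2, 6\}$ produces arbitrarily long arithmetic progressions of $n$ such that (after a suitable reindexing of the three shifts) two of $n$, $n + 2$, $n + 6$ are semiprimes with exponent pattern $(1,1)$ and the third has exponent pattern some $\boldsymbol\alpha_s$ with $P^-(n + h_s) > n^{c_1(3)}$, hence $n + h_s$ is itself a product of at most $1/c_1(3)$ distinct primes (squarefree, since it is prime-power-free below $n^{c_1(3)}$ and too small to carry a repeated large prime). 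The point is that, by passing to a sub-progression via van der Waerden / compactness exactly as in Section~\ref{sec:7}, we may fix which coordinate plays which role and fix the exponent patterns along the whole progression.

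Next I would engineer the target exponent pattern $\boldsymbol\alpha$. Write $\boldsymbol\alpha = (2, 1, 1, 1, \beta_1, \dots, \beta_t)$ where the displayed entries are the forced $\alpha_i = 2$ and three forced $1$'s, and the $\beta_j \geq 1$ are the remaining prescribed entries. Given a progression of semiprime pairs, say $q$ and $q + d$ with $d \in \{2,4,6\}$ both of exponent pattern $(1,1)$, running through $q = q_0 + \ell T$ for $\ell = 0, 1, \dots, L-1$, I would multiply through by a fixed integer $m$ built to supply the missing structure: choose $m$ to be a product of one prime squared and finitely many further distinct primes, all chosen larger than everything occurring in the finite progression and coprime to all the $q$'s, so that $mq$ and $m(q+d)$ acquire exponent pattern $(2,1,1,1,\dots)$ with the prescribed extra entries, while $mq$ and $m(q+d) = mq + md$ still form (with the same common difference scaled by $m$) an arithmetic progression in $\ell$. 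This realizes $\omega = \omega(m) + 2$, $\Omega = \Omega(m) + 2$, $d(\cdot) = d(m)\cdot 4$ simultaneously on $mq$ and $mq + md$; tuning $m$ and $d$ gives $(4,5,24)$ from $d(m) = 6$, $\omega(m) = 2$, $\Omega(m) = 3$ (e.g.\ $m = p^2 q'$) together with $d = 6$, and the substitutions $(4+B, 5+B, 24\cdot 2^B)$, $(5, 5+B, 24(B+1))$ follow by adjoining $B$ further primes (or raising an exponent) to $m$. The final clause, realizing arbitrary $(A, B, C)$ with $A \geq 3$, $B \geq 4$, $24 \mid C$ not simultaneously, is handled the same way: any such triple is $(\omega(m) + 2, \Omega(m) + 2, d(m)\cdot 4)$ for a suitable $m$ since we are free to prescribe $\omega(m) = A - 2 \geq 1$, $\Omega(m) = B - 2 \geq 2$, and $d(m) = C/4$ — the divisibility $24 \mid C$ ensures $6 \mid C/4$, which together with $\Omega(m) \geq 2$ and $\omega(m) \geq 1$ is exactly what is needed to realize $d(m) = \prod (\gamma_i + 1)$ for an appropriate multiset of exponents $\gamma_i$. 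Finally, to get $n$ and $n+1$ rather than $q$ and $q + d$, I would instead start from the "$d = 2$" case of the Corollary to Theorem~\ref{th:7}: a progression of pairs $q, q+2$ both semiprime; then $n := q/?$ — more directly, take $q$ odd, so $q$ and $q + 2$ are consecutive odd numbers, and among $q, q+1, q+2$ the middle term $q + 1$ is even; replacing the tuple $\{0,2,6\}$ by $\{0,1,2\}$ (admissible) and demanding the two primes/semiprimes at positions $0$ and $1$ (or at two adjacent positions) gives directly $n, n+1$ after scaling by an odd $m$.

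The main obstacle is the transition from "two coordinates are $P_2$'s" to "two coordinates have exactly exponent pattern $(1,1)$", i.e.\ genuinely semiprimes that are products of two \emph{distinct} primes: Theorem~\ref{th:7} already asserts exponent pattern $(1,1)$, so the real work is all inside Theorem~\ref{th:7} (equivalently Section~\ref{sec:10} combined with the \cite{GGPY1--3} method), and here we merely quote it. The remaining delicate point is bookkeeping in the scaling step: one must ensure the auxiliary prime factors of $m$ never collide with the (finitely many, but $N$-dependent) prime factors of the $q$'s along the progression, and never coincide with the shift $d$; since the progression has bounded length once $m$ is fixed, but $m$ must be fixed \emph{before} the length is chosen, I would instead fix $m$ first, then invoke Theorem~\ref{th:7} to produce an $L$-term progression all of whose semiprime entries exceed $m$ and are coprime to $m$ — this is automatic because $P^-$ of the relevant factors is $\geq n^{c_1}$, which beats any fixed $m$ for $n$ large, and the density lower bound in Theorem~\ref{th:7} guarantees such a progression of every length $L$. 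With that ordering of quantifiers the argument closes, and no further estimates beyond those already established are needed.
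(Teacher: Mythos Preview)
The paper does not actually prove Theorem~\ref{th:8}; it is announced in Section~\ref{sec:12} as a consequence of the method of \cite{GGPY1--3} combined with \cite{GT}, with the proof explicitly deferred (``To the proof of Theorems~\ref{th:7}--\ref{th:11} we shall return in a later work''). So there is no detailed argument in the paper to compare against, only the indicated ingredients.

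Your reduction via Theorem~\ref{th:7} plus multiplicative scaling has a genuine gap at the passage from pairs $(q,q+d)$ to consecutive integers $(n,n+1)$. Multiplying a semiprime pair by a fixed $m$ yields $(mq,\,mq+md)$, which differ by $md$, never by~$1$. Your attempted repair --- applying Theorem~\ref{th:7} to the tuple $\{0,1,2\}$ --- fails because $\{0,1,2\}$ is \emph{not} admissible: it covers both residue classes modulo~$2$. More fundamentally, \emph{any} tuple containing two consecutive integers is inadmissible for this reason, so one can never arrange $P^-(n)>n^{c_1}$ and $P^-(n+1)>n^{c_1}$ simultaneously; one of $n,n+1$ is even. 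Hence no shift-tuple $\{h_i\}$ together with a global scaling can produce the conclusion.

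The mechanism that actually bridges this gap in the \cite{GGPY1--3} treatment is to pre-assign the small prime factors (in particular the square and one of the required simple factors) to $n$ and $n+1$ via congruences, and then apply the almost-prime machinery to a pair of linear forms with \emph{different} leading coefficients. Schematically: fix coprime $a,b$ built from the small primes supplying the ``$2$'' and part of the ``$1,1,1$'' in the pattern, solve $a u_0 - b v_0 = 1$, and seek $t$ with $L_1(t)=u_0+bt$ and $L_2(t)=v_0+at$ both products of two large distinct primes coprime to $ab$; then $n=aL_1(t)$ and $n+1=bL_2(t)$ are consecutive with the prescribed exponent pattern. This requires the GGPY input for genuine linear forms $a_in+b_i$, not merely shifts $n+h_i$, which is available in \cite{GGPY1--3} but is not the literal statement of Theorem~\ref{th:7}. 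Once one has $\gg N/\log^3 N$ such $n\le N$ (this is Theorem~\ref{th:11}, also deferred), the arithmetic-progression conclusion does follow, as you anticipate, by feeding that positive-density set into Theorem~\ref{th:5}.
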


The above is a far-reaching  generalization of three conjectures of Erd\H{o}s \cite{Erd} and Erd\H{o}s--Mirsky \cite{EM}, respectively, asking whether there exists an infinite set of numbers $n$ satisfying (not necessarily simultaneously)

\hspace*{5.5pt}(i) $\omega(n) = \omega(n + 1)$  \ ($\omega(n)$ is the number of distinct prime divisors of~$n$),

\hspace*{3.5pt}(ii) $\Omega(n) = \Omega(n + 1)$  \ ($\Omega(n)$ is the total number of prime divisors of~$n$),

(iii) $d(n) = d(n + 1)$ \  ($d(n)$ is the number of divisors of~$n$).

\smallskip
We mention that  Theorems \ref{th:6}--\ref{th:9} need apart from a variant of the Green--Tao method sketched in our present Section~\ref{sec:10} also a variant of the method of \cite{GPY1}, due to S. W. Graham, D. Goldston, C. Y{\i}ld{\i}r{\i}m and the present author.
In contrast to this, in Theorem~\ref{th:5} -- which is a generalization of R\'enyi's result \cite{Ren} about the existence of infinitely many primes with $p + 2 = P_K$ for some large $K$ -- the method of \cite{GPY}, our Lemma~\ref{lem:4}, can be substituted for a result contained implicitly in Theorem~10.7 of Halberstam and Richert \cite{HR}.

Although we do not know whether any given number~$d$ appears infinitely many times as the difference of two semiprimes, we are able to prove the following rather general

\begin{theorem}
\label{th:9}
There exists an admissible $k$-tuple $\mathcal H = \{0, h_1, \dots, h_{k - 1}\}$ for any $k \geq 1$ such that there are arbitrarily long arithmetic progressions of semiprimes $q_j$ with the property that all numbers $q_j + h_i$ $(i = 1,2, \dots, k - 1)$ are semiprimes, too.
\end{theorem}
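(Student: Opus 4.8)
The plan is to reduce Theorem~\ref{th:9} to Theorem~\ref{th:5} in exactly the way Theorems~\ref{th:1}--\ref{th:2} were, the only new ingredient being a semiprime version of the input data. It suffices to produce, for each $k\ge1$, an admissible $r$-tuple $\mathcal A=\{a_1,\dots,a_r\}$ with $r=r(k)\ge k$, a constant $c_1=c_1(r)>0$, a set $\mathcal N\subseteq\bigl\{n:P^-(P_{\mathcal A}(n))\ge n^{c_1}\bigr\}$ with $\bigl|\{n\le X:n\in\mathcal N\}\bigr|\gg_r X/\log^r X$, and a fixed $k$-subset of indices $i_1<\dots<i_k$, such that for every $n\in\mathcal N$ each of $n+a_{i_1},\dots,n+a_{i_k}$ is a semiprime (exponent pattern $(1,1)$). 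Indeed, Theorem~\ref{th:5} applied to $(\mathcal A,\mathcal N)$ then yields, for every $m$, an $m$-term arithmetic progression $n_1<\dots<n_m$ lying in $\mathcal N$; putting $q_j:=n_j+a_{i_1}$ and $\mathcal H:=\{0,\,a_{i_2}-a_{i_1},\dots,a_{i_k}-a_{i_1}\}$, the $q_j$ form an arithmetic progression of semiprimes and every $q_j+h$ with $h\in\mathcal H$ equals some $n_j+a_{i_s}$, hence is a semiprime; admissibility of $\mathcal H$ is automatic, being a sub-tuple of the admissible $\mathcal A$ (cf.\ the last Remark after Theorem~\ref{th:5}).

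The construction of $\mathcal A$ and $\mathcal N$ is where the method of \cite{GGPY1--3} enters. One runs the $E_2$-analogue of the Goldston--Pintz--Y{\i}ld{\i}r{\i}m argument behind Theorems~\ref{th:B} and \ref{th:3} on a large admissible tuple $\mathcal A$: the weights $\Lambda_R(n;\mathcal A,\cdot)$ of Sections~\ref{sec:5}--\ref{sec:9} are replaced by the variants tuned to detect products of two primes, and one shows, for $r=|\mathcal A|$ large enough relative to $k$, that there are $\gg_r X/\log^r X$ values $n\le X$ with $P^-(P_{\mathcal A}(n))>n^{c_1}$ (the almost-prime localisation of Lemma~\ref{lem:4}, or of Theorem~10.7 of \cite{HR}, which costs only a small fraction of the weight) for which at least $k$ of the $n+a_i$ are products of two primes. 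Since $c_1$ is fixed, every $n+a_i$ has at most $1/c_1$ prime factors, so the vector of exponent patterns of $(n+a_1,\dots,n+a_r)$ ranges over a bounded set; by the pigeonhole argument used at the end of Section~\ref{sec:7} and in the proof of Theorem~\ref{th:2}, one such vector is attained by $\gg_r X/\log^r X$ of these $n$, and it has at least $k$ coordinates equal to the pattern $(1,1)$. Declaring $\mathcal N$ to be the set of $n$ attaining this vector and $i_1<\dots<i_k$ to be $k$ of the corresponding semiprime indices supplies the required data; as in Section~\ref{sec:7} one may even fix the full exponent patterns of all components.

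The hard part is precisely the assertion that the detected exponent-pattern vector has at least $k$ semiprime coordinates, i.e.\ that the $E_2$-version of the Goldston--Pintz--Y{\i}ld{\i}r{\i}m machinery really finds $k$, and not merely two, simultaneous products of two primes in a bounded configuration, on a set of density of the full order $X/\log^r X$. The cheap devices fail: pinning each $n+a_i$ to a fixed prime times a cofactor reduces the task to the simultaneous primality of $k$ linear forms, a positive-proportion prime $k$-tuple statement out of reach today; and no pure upper-bound sieve can localise $k$ coordinates in the class $E_2$ at once, because of the parity obstruction. What makes it work is the positivity phenomenon underlying Theorem~\ref{th:B}: for $r$ large enough relative to $k$ (and, as in Section~\ref{sec:8}, a tuple $\mathcal A$ of small diameter) the relevant GGPY weighted sum $\sum_{n\sim N}\bigl(\sum_{a\in\mathcal A}w_{E_2}(n+a)-\kappa_k\bigr)\Lambda_R(n;\mathcal A,\cdot)^2$ is positive, forcing $\ge k$ of the $n+a$ to be products of two primes; the extra room over the prime case comes from the $\tfrac14$-versus-$\tfrac12$ saving intrinsic to the $E_2$-problem, which is why \cite{GGPY1--3} obtain bounded gaps between products of two primes unconditionally. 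Everything downstream --- in particular the count $\gg_{r,m}N^2/\log^{rm}N$ of such progressions of length $m$ below $N$ --- is then the content of Theorem~\ref{th:5}.
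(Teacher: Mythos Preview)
The paper does not actually prove Theorem~\ref{th:9}: the penultimate sentence of Section~\ref{sec:12} explicitly says ``To the proof of Theorems~\ref{th:7}--\ref{th:11} we shall return in a later work.'' The only information the paper gives is the remark that Theorems~\ref{th:6}--\ref{th:9} ``need apart from a variant of the Green--Tao method sketched in our present Section~\ref{sec:10} also a variant of the method of [GPY1], due to S.~W.~Graham, D.~Goldston, C.~Y{\i}ld{\i}r{\i}m and the present author.'' Your proposal is entirely consistent with this hint: you have correctly isolated Theorem~\ref{th:5} as the Green--Tao input, the GGPY machinery for $E_2$-numbers as the replacement for the prime-detecting part of Sections~\ref{sec:5}--\ref{sec:7}, the almost-prime localisation via Lemma~\ref{lem:4}, and the pigeonhole reduction to a fixed exponent-pattern vector from the end of Section~\ref{sec:7}. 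So at the level of architecture there is nothing to compare against, and your outline matches what the author signals.

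What you flag as the hard part is indeed where all the content lies, and your treatment of it is more assertion than argument. The claim that for $r$ large enough the GGPY weighted sum forces at least $k$ of the $n+a_i$ to be products of two primes is true (it is essentially the content of the ``$\nu+1$ $E_2$-numbers in an admissible $(\nu^2+\nu)$-tuple'' type results in \cite{GGPY1}--\cite{GGPY3}), but the mechanism is not a straightforward positivity of a sum of the shape $\sum_n\bigl(\sum_{a\in\mathcal A}w_{E_2}(n+a)-\kappa_k\bigr)\Lambda_R(n;\mathcal A,\cdot)^2$ as you write; it involves a specific divisor-switching identity that circumvents the parity barrier, and one must verify separately that (a) it scales to produce arbitrarily many simultaneous $E_2$'s (not just the two featured in the headline theorems of \cite{GGPY2}) and (b) it is compatible with the restriction $P^-(P_{\mathcal A}(n))>n^{c_1}$ at density $\gg X/\log^r X$. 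Since the paper itself defers exactly these details, your proposal is as complete as one could expect here; just be aware that the paragraph on the ``$\tfrac14$-versus-$\tfrac12$ saving'' is a heuristic, not the actual proof.
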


Some further results, which are not connected with the Green--Tao method, but which form a part of the proof of Theorems \ref{th:7}--\ref{th:9}, are the following

\begin{theorem}
\label{th:10}
Let $\mathcal H$ be an admissible $k$-tuple with $k \geq 3$.
The number of those $n$'s up to $N$ for which $n + \mathcal H$ contains at least two semiprimes and almost primes in all other components $n + h_s$ is
\beq
\geq c_2(k, \mathcal H) \frac{N}{\log^k N}.
\label{eq:12.5}
\eeq
\end{theorem}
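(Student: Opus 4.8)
\noindent The plan is to run the proof of Theorem~\ref{th:3} — Sections~\ref{sec:5}--\ref{sec:9} together with the concentration Lemmas~\ref{lem:3}--\ref{lem:6} — but with the prime-detecting ingredient $\theta(n+h)$, and with it Lemma~\ref{lem:2}, replaced by the $E_2$-detecting refinement of the Goldston--Pintz--Y{\i}ld{\i}r{\i}m method due to Goldston, Graham, Pintz and Y{\i}ld{\i}r{\i}m. Concretely, fix a small $\sigma \in (0,\tfrac12)$, put
\[
\theta_\beta(m) := \sum_{\substack{m = p_1 p_2\\ p_1 < m^\sigma}} \log p_2 ,
\]
which is supported on semiprimes whose smaller prime factor is $< m^\sigma$ (so that $m$ has exponent pattern exactly $(1,1)$, the two primes being automatically distinct once $\sigma < \tfrac12$), and keep a GPY-type weight $\Lambda_R(n;\mathcal H, k+l)^2$ — or, as in Section~\ref{sec:8}, a linear combination of such — with $R$ a suitable fixed positive power of $N$. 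In analogy with \eqref{eq:5.8} and \eqref{eq:7.1} one forms
\[
S := \sum_{\substack{n \sim N\\ (P_{\mathcal H}(n),\,P(R^\eta)) = 1}} \Bigl(\sum_{h \in \mathcal H}\theta_\beta(n+h) - \log 3N\Bigr)\Lambda_R(n;\mathcal H, k+l)^2 .
\]
Since $\theta_\beta(m) < \log m < \log 3N$ whenever $\theta_\beta(m) \neq 0$, any $n$ contributing positively to $S$ must have at least two of the $n+h_i$ semiprime; and $(P_{\mathcal H}(n),P(R^\eta)) = 1$ forces $P^-(n+h_s) > R^\eta \gg n^{c_1(k)}$ in \emph{every} component (in particular the small prime factor of each of those semiprimes automatically exceeds $R^\eta$). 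So the theorem will follow once $S \gg_{k,\mathcal H} N(\log R)^{k+2l}$ is established together with the pointwise bound on the summand on the set $(P_{\mathcal H}(n),P(R^\eta)) = 1$, exactly as in \eqref{eq:7.4}--\eqref{eq:7.5}.

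Two ingredients are needed. The first is the second moment $\sum_{n\sim N}\Lambda_R(n;\mathcal H,k+l)^2$, which is Lemma~\ref{lem:1}. The second is the $E_2$-twisted moment $\sum_{n\sim N}\theta_\beta(n+h)\Lambda_R(n;\mathcal H,k+l)^2$ — the analogue of Lemma~\ref{lem:2} — and this is exactly the point where the GGPY refinement enters: writing $\theta_\beta(n+h)$ as a sum over the small prime factor $p_1$ of $\log$ of the cofactor times an indicator that $(n+h)/p_1$ is prime, one reduces to GPY-type sums for the shifted linear forms in the cofactor variable, and the decisive feature is that the extra averaging over $p_1$ (equivalently, the fact that the $E_2$ numbers are a factor $\sim\log_2 N$ denser than the primes) makes the resulting ``crucial inequality'', the $E_2$-analogue of \eqref{eq:5.9}, satisfiable \emph{unconditionally}, i.e.\ on the unconditional Bombieri--Vinogradov range $\log R/\log N = \tfrac14 - \ve$, for every admissible $k$-tuple with $k \ge 3$. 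This is essentially what is established by the GGPY method for small gaps between $E_2$ numbers; I would import it, after checking that its weights are of the GPY type used in Sections~\ref{sec:5}--\ref{sec:7}, or else redo the error analysis of \cite{GMPY} with the extra $p_1$-averaging if a different weight is used. Optimising $l$, $\sigma$ and the linear combination then gives $S \gg_{k,\mathcal H} N(\log R)^{k+2l}$ for $k \ge 3$.

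The ``almost primes in all other components'' part is then cheap, and is really the only place where the present paper's apparatus, rather than GGPY's, does the work: the weight $\Lambda_R(n;\mathcal H,k+l)^2$ is the same object as in Sections~\ref{sec:6}--\ref{sec:7}, so Lemma~\ref{lem:4} bounds the contribution to $S$ of those $n$ with $(P_{\mathcal H}(n),P(R^\eta)) > 1$ by $\ll (k+1)\,\eta\sum_{n\sim N}\Lambda_R(n;\mathcal H,k+l)^2 \ll \eta\,N(\log R)^{k+2l}$ once one uses $\theta_\beta(n+h) \ll \log N$ together with a Cauchy--Schwarz step as in Remark~1 after Lemma~\ref{lem:4}; choosing $\eta$ small after $l,\sigma$ are fixed thus preserves $S \gg_{k,\mathcal H} N(\log R)^{k+2l}$. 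On the set $(P_{\mathcal H}(n),P(R^\eta)) = 1$ with $R > (3N)^{1/4}$ one has $\Lambda_R(n;\mathcal H,k+l)^2 \le \bigl(2^{4k/\eta}(\log R)^{k+l}/(k+l)!\bigr)^2$, so the number of $n \le N$ with $P^-(n+h_s) > n^{c_1(k)}$ for all $s$ and at least two semiprimes among the $n+h_s$ is $\gg_{k,\mathcal H} N/\log^k N$; and, as at the end of Section~\ref{sec:7}, the number of possible exponent patterns of $n+\mathcal H$ on this set is bounded in terms of $k$, so at least one configuration — hence a fixed pair of positions carrying the two semiprimes — occurs for $\ge c_2(k,\mathcal H)\,N/\log^k N$ values of $n$, which is \eqref{eq:12.5}.

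\smallskip
\noindent The main obstacle is the second ingredient of the second paragraph: establishing the $E_2$-analogue of Lemma~\ref{lem:2} with the correct main term uniformly over the dyadic range of the small prime factor $p_1$ — a Bombieri--Vinogradov estimate in which both the sieve modulus and the multiplicative shift vary — and then pushing through the GGPY optimisation so that the resulting crucial inequality genuinely holds at the unconditional level $\vartheta = \tfrac12$ for $k = 3$. Everything else in the argument is a line-by-line transcription of Sections~\ref{sec:5}--\ref{sec:9}.
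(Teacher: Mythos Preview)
The paper does not actually prove Theorem~\ref{th:10}: it is stated in Section~\ref{sec:12} among ``Further problems,'' and the author writes explicitly ``To the proof of Theorems \ref{th:7}--\ref{th:11} we shall return in a later work.'' The only indication given is that the argument requires ``a variant of the method of \cite{GPY1}, due to S.~W.~Graham, D.~Goldston, C.~Y{\i}ld{\i}r{\i}m and the present author'' --- that is, the GGPY method of \cite{GGPY1}--\cite{GGPY3} --- together with the concentration machinery of the present paper. Your proposal is precisely this route: replace the prime-detector $\theta$ of Lemma~\ref{lem:2} by a semiprime-detector $\theta_\beta$, import the GGPY asymptotics to satisfy the analogue of the crucial inequality \eqref{eq:5.9} unconditionally for $k\ge 3$, and then graft on Lemmas~\ref{lem:3}--\ref{lem:6} to force $P^-(n+h_s)>n^{c_1(k)}$ in every component, concluding via the pointwise bound \eqref{eq:7.4}. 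So your outline matches the paper's indicated approach.

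Two small points. First, in \cite{GGPY1}--\cite{GGPY3} the sieve weight is not literally $\Lambda_R(n;\mathcal H,k+l)$ of \eqref{eq:5.4} but a closely related Selberg-type weight, and the optimisation is done differently; you flag this, but it means Lemmas~\ref{lem:3}--\ref{lem:4} must be re-proved for that weight rather than cited verbatim. Second, the inequality $R>(3N)^{1/4}$ you borrow from \eqref{eq:7.4} fails at the unconditional level $R=N^{1/4-\ve}$; this is cosmetic, since the bounded-divisor argument there only needs $R$ to be a fixed positive power of $N$, but the constants in \eqref{eq:7.4} should be adjusted accordingly.
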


\begin{theorem}
\label{th:11}
The number of integers $n$ satisfying any of the equations
\begin{alignat}2
\label{eq:12.6}
\omega(n) &= \omega(n + 1) = A, & \quad &A \text{ fixed, } A \geq 3,\\
\label{eq:12.7}
\Omega(n) &= \Omega(n + 1) = B, & \quad &B \text{ fixed, } B \geq 4,\\
\label{eq:12.8}
d(n) &= d(n + 1) = C, & \quad & C \text{ fixed, } 24\mid C
\end{alignat}
 as well as the number of integers having an exponent patter including $\{2, 1,\\ 1, 1\}$ is
 \beq
 \gg \frac{N}{\log^3 N},
 \label{eq:12.9}
 \eeq
 where the constant implied by the $\ll$ symbol depends on $A$, $B$ or $C$, respectively.
\end{theorem}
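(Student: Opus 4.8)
The plan is to deduce Theorem~\ref{th:11} from Theorem~\ref{th:10} (more precisely, from the version of Theorem~\ref{th:10} for tuples of linear forms, proved by the same sieve argument of Sections~\ref{sec:6}--\ref{sec:9}) by an elementary reduction, in the spirit of the passage from Theorem~\ref{th:3} to Theorem~\ref{th:4}. The consecutive integers $n$ and $n+1$ can never both avoid small prime factors, so the idea is to build their unavoidable small factors into fixed parameters and let the sieve act on the ``large'' parts, which become two linear forms in one variable.

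Concretely, for a given target I would fix coprime integers $D_1,D_2$ with $2\mid D_1$, supported on a bounded set of small primes, with $D_1D_2\mid N$, chosen so that $D_1$ (resp.\ $D_2$) already carries the prescribed multiplicative data: for the $\omega$-statement, $A-2$ distinct prime factors each; for the $\Omega$-statement, $B-2$ prime factors counted with multiplicity each; for the $d$-statement, powerful numbers with $d(D_i)=C/4$ (solvable exactly because $24\mid C$, since $d(D_i)$ must be a multiple of $6$); for the exponent-pattern statement, shapes $p^2qr$. Imposing $n\equiv 0\pmod{D_1}$ and $n\equiv -1\pmod{D_2}$ and writing $n=D_1D_2\ell+n_0$ with $n_0$ the residue supplied by the Chinese Remainder Theorem, the quotients become linear forms
\[
\frac{n}{D_1}=D_2\ell+\frac{n_0}{D_1}=:L_1(\ell),\qquad
\frac{n+1}{D_2}=D_1\ell+\frac{n_0+1}{D_2}=:L_2(\ell).
\]
One then adjoins an auxiliary third form $L_3$, checks that $\{L_1,L_2,L_3\}$ can be made an admissible triple of linear forms — the only delicate prime being $p=2$, where $2\mid D_1$ forces $L_2(\ell)$ to be odd — and imposes a suitable congruence restriction on $\ell$ guaranteeing $(L_i(\ell),D_1D_2)=1$ and $(L_i(\ell),L_j(\ell))=1$ for the relevant $\ell$. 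With these coprimalities, $\Omega(n)=\Omega(D_1)+\Omega(L_1(\ell))$, $\Omega(n+1)=\Omega(D_2)+\Omega(L_2(\ell))$, and likewise for $\omega$, for $d$ (where the local factors multiply), and for exponent patterns.

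Next I would apply the linear-form analogue of Theorem~\ref{th:10}: for the admissible triple $\{L_1,L_2,L_3\}$ there are $\gg N/\log^3N$ values of $\ell\le N/(D_1D_2)$ for which at least two of $L_1(\ell),L_2(\ell),L_3(\ell)$ are semiprimes and every $L_i(\ell)$ satisfies $P^-(L_i(\ell))>\ell^{c_1}$; in particular each $L_i(\ell)$ is squarefree with a bounded number of prime factors. By the pigeonhole argument of Section~\ref{sec:7}, a positive proportion of these $\ell$ share one fixed value of the (bounded) vector $\bigl(\Omega(L_1(\ell)),\Omega(L_2(\ell)),\Omega(L_3(\ell))\bigr)$, two of whose entries equal $2$; after relabelling we may assume $\Omega(L_1(\ell))=2$ on this proportion. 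For the exponent-pattern statement this already suffices: with $D_1=p^2qr$ and $D_2=p'^2q'r'$ (all primes distinct) one gets that $n=D_1L_1(\ell)$ has exponent pattern including $(2,1,1,1)$, because $L_1(\ell)$ contributes two further distinct primes, and $n+1=D_2L_2(\ell)$ does too, because $L_2(\ell)$, being $P^-$-large and squarefree, contributes at least one further prime. For the exact statements $\omega=\omega=A$, $\Omega=\Omega=B$, $d=d=C$ one instead runs the construction over the finitely many admissible choices of $D_1,D_2$ compatible with the target and over the finitely many a priori possible outputs of the sieve (which two positions carry the semiprimes and how many prime factors the remaining form has), and a covering/pigeonhole argument over these boundedly many runs produces $\gg N/\log^3N$ integers $n\le N$ with the prescribed invariant; translating $\ell\le N/(D_1D_2)$ back to $n\le N$ only changes the implied constant, which may therefore depend on $A$, $B$ or $C$.

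The main obstacle is not analytic — all of it is contained in Theorem~\ref{th:10} and the sieve estimates of Sections~\ref{sec:6}--\ref{sec:9} — but combinatorial: the sieve returns, in a manner we cannot prescribe in advance, which positions carry the two semiprimes and the number of prime factors of the remaining form, and the whole point is to arrange the fixed data so that whatever it returns can be absorbed into a legitimate value of the target invariant. This is exactly where the hypotheses $A\ge 3$, $B\ge 4$, $24\mid C$ enter: they are precisely the conditions under which, for $\Omega(L_i(\ell))$ as large as the Halberstam--Richert bound $C_3(3)$ permits and with at least two of the forms forced to be genuine semiprimes, such an arrangement exists. The remaining technical points — uniform admissibility of $\{L_1,L_2,L_3\}$ across the finitely many constructions, the coprimality conditions, and the innocuousness of the auxiliary form $L_3$ — are handled exactly as in Sections~\ref{sec:5}--\ref{sec:9}.
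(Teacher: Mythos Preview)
The paper does not actually prove Theorem~\ref{th:11}: the last paragraph of Section~\ref{sec:12} explicitly defers the proofs of Theorems~\ref{th:7}--\ref{th:11} to later work, indicating only that they rest on the variant of the \cite{GPY1} method developed with Graham in \cite{GGPY1}, \cite{GGPY2}, \cite{GGPY3}. So there is no argument in the paper to compare yours against; what follows is an assessment of your sketch on its own terms.

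Your overall reduction --- absorb the unavoidable small prime factors of $n$ and $n+1$ into fixed moduli $D_1,D_2$, pass to two linear forms $L_1,L_2$ in one variable, adjoin a third form to reach $k=3$, and invoke the linear-forms analogue of Theorem~\ref{th:10} --- is indeed the GGPY strategy, and for the exponent-pattern assertion (where ``including $\{2,1,1,1\}$'' tolerates extra prime factors in $L_1,L_2$) the argument essentially goes through as you describe, modulo routine points such as the squarefreeness of the $P^-$--large values and the need to prove, not merely invoke, the linear-forms version of Theorem~\ref{th:10}.

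For the \emph{exact} equalities $\omega(n)=\omega(n+1)=A$, $\Omega(n)=\Omega(n+1)=B$, $d(n)=d(n+1)=C$, however, your final paragraph hides the real difficulty rather than resolving it. You correctly identify the obstacle: the sieve does not let you prescribe which two of $L_1,L_2,L_3$ are the semiprimes, nor the value of $\Omega$ on the remaining form. But your proposed cure --- run the construction over all finitely many $(D_1,D_2)$ and all finitely many possible outputs, then pigeonhole --- is circular as written: the forms $L_1,L_2$ themselves depend on $D_1,D_2$, so changing $(D_1,D_2)$ to accommodate a different sieve output changes the input to the sieve and hence, in general, the output again. Nothing forces this iteration to have a fixed point. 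Concretely, if for your chosen $(D_1,D_2)$ the semiprime positions come out as $L_1,L_3$, you know $\Omega(L_1)=2$ but only that $\Omega(L_2)$ lies in some bounded range; to hit $\Omega(n+1)=B$ you would need $\Omega(D_2)=B-\Omega(L_2)$, which you cannot arrange after the fact for a $D_2$ fixed before the sieve was run. The actual GGPY argument (see \cite{GGPY3}) handles this not by pigeonholing over independent runs but by designing a single system of three linear forms with the property that \emph{whichever} pair of positions carries the two $E_2$--numbers, suitable fixed multipliers turn that pair into a legitimate consecutive pair $(n,n+1)$ with the target invariant; it is this specific ``triangular'' construction, rather than a generic covering, that makes the thresholds $A\ge 3$, $B\ge 4$, $24\mid C$ the right ones.
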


In particular, \eqref{eq:12.3} has at least $cN/\log^3 N$ solutions below~$N$.
We remark that the expected number of solutions should be $c_6 N \,(\log_2 N)^{c_7}/{\log^2 N}$.

To the proof of Theorems \ref{th:7}--\ref{th:11} we shall return in a later work.
Theorem~\ref{th:6} actually follows from the results of Sections~\ref{sec:6} and \ref{sec:10} of our present work, more precisely from Lemmas \ref{lem:1}, \ref{lem:2}, \ref{lem:4}, \ref{lem:6} and Theorem~\ref{th:5}.

\noindent
{\small J\'anos {\sc Pintz}\\
R\'enyi Mathematical Institute of the Hungarian Academy
of Sciences\\
Budapest\\
Re\'altanoda u. 13--15\\
H-1053 Hungary\\
E-mail: pintz@renyi.hu}

\end{document}